\def \wh {\widehat}
\def \Mat{\mbox {\rm Mat}}
\def \Fi {\mbox {\rm Fi}}
\def \J{\mbox {\rm J}}
\def \B{\mbox {\rm B}}
\def \McL{\mbox {\rm McL}}
\newcommand{\PSL}{\mathrm{PSL}}
\newcommand{\SL}{\mathrm{SL}}
\newcommand{\PGL}{\mathrm{PGL}}\newcommand{\PSU}{\mathrm{PSU}}
\def \syl {\hbox {\rm Syl}}\def \Syl {\hbox {\rm Syl}}
\def \ov{\overline}
\def \PSp {\mathrm {PSp}}
\newtheorem{defn}{Definition}[section]
\newtheorem{theorem}[defn]{Theorem}
\newtheorem{remark}[defn]{Remark}
\newtheorem{lemma}[defn]{Lemma}
\newtheorem{prop}[defn]{Proposition}
\def \d {\mathrm d}
\def\B {\mathrm B}
\newcounter{claim}[defn]
\title{The diameter of the commuting graph of a finite group with trivial centre}
\author{G. L.  Morgan}\author{C. W.  Parker}
\address{G. L.  Morgan and C. W. Parker \\
School of Mathematics\\
University of Birmingham\\
Edgbaston\\
Birmingham B15 2TT\\
United Kingdom} \email{morganl@maths.bham.ac.uk, c.w.parker@bham.ac.uk}
	\email{ }
\begin{document}

\begin{abstract} The commuting graph $\Gamma$ of a finite  group with trivial centre is examined. It is shown that the connected components of  $\Gamma$ have diameter at most $10$.\end{abstract}
\maketitle

\section{Introduction}

Suppose that $G$ is a    group.  The \emph{commuting graph}  $\Gamma(G)$ of $G$ is the graph which has vertices the non-central elements of $G$ and two distinct vertices of $\Gamma(G)$ are adjacent if and only if they commute in $G$. In this paper we study the commuting graph of a finite group. In \cite{I}  Iranmanesh and  Jafarzadeh demonstrate that the commuting graph of  $\mathrm{Sym}(n)$ and $\mathrm {Alt}(n)$, the symmetric group and alternating group on $n$ letters,  is either disconnected or has diameter  at most $5$ and, in the same article, they conjecture that there is an absolute upper bound for the diameter of a connected commuting graph of a non-abelian finite group. This conjecture was shown to be incorrect in \cite{parkergiudici} where  an infinite family of special $2$-groups with commuting graphs of increasing diameter was presented. However, the core suggestion of the conjecture is not far from the mark. In this paper we prove

\begin{theorem}\label{MainTheorem} Suppose that $G$ is a finite group with trivial centre. Then every connected component of the commuting graph of $G$ has diameter at most $10$. In particular, if the commuting graph of $G$ is connected, then its diameter is at most $10$.
\end{theorem}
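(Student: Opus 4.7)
The plan is to exploit the triviality of $Z(G)$ through the generalized Fitting subgroup $F^*(G)$, using the fact that $C_G(F^*(G)) \leq F^*(G)$. Fix a connected component $\Delta$ of the commuting graph and two non-central elements $x, y \in \Delta$; the goal is to show that both are close (in $\Delta$) to some canonical ``anchor'' subgroup, and that bounding the diameter of the commuting graph of the anchor will then close the argument.

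The first case is when the Fitting subgroup $F(G)$ is non-trivial. I would pick a minimal normal $p$-subgroup $V \leq F(G)$; this $V$ is elementary abelian and normal in $G$, so $V \setminus \{1\}$ is a clique in $\Gamma(G)$. Because $Z(G) = 1$, a fixed-point argument (coprime action in the easy cases, and a more delicate Thompson-style analysis when $p$ divides $|x|$) produces a non-trivial element of $C_V(x)$ for every non-central $x$, placing $x$ at distance at most $2$ from every $v \in V \setminus \{1\}$ and hence yielding $d(x,y) \leq 4$ in this regime.

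Otherwise $F^*(G) = E(G) = L_1 \cdots L_r$ is a central product of quasisimple factors with $L_i/Z(L_i)$ non-abelian simple. If $r \geq 2$, the product structure supplies many commuting pairs through diagonals and component complements, and again the diameter bound drops out quickly. The heart of the proof is therefore the almost-simple case $r = 1$: for each family of simple socle $L$, one would invoke the Classification of Finite Simple Groups to pinpoint a rich abelian subgroup $A$ (typically a maximal torus in the Lie type case, a large abelian subgroup of a Sylow in the sporadic and alternating cases) such that (i) every non-central element of $G$ commutes with a non-trivial element of some conjugate $A^g$, bounding $d(x, A^G)$ and $d(y, A^G)$ by a small constant, and (ii) any two conjugates of $A$ are linked through a short chain via overlaps of their normalisers. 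Together these give the uniform bound $d(x,y) \leq 10$.

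The hardest step will be the almost-simple analysis. Low-rank or small-characteristic Lie type groups such as $\PSL_2(q)$, $\PSU_3(q)$ and the Suzuki groups have tight centraliser structure where the generic torus argument must be replaced by dedicated case work; several sporadic groups require similar bespoke attention; and the presence of field or graph automorphisms in the outer envelope adds further subtlety. A second technical hurdle is the case $r \geq 2$ in which an element permutes the components $L_1, \ldots, L_r$ non-trivially: its projection to each individual $L_i$ may be trivial, so one cannot simply work inside a single component and must instead locate its centraliser within diagonal subgroups of $E(G)$.
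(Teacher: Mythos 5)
Your first case already breaks down. For $F(G)\neq 1$ you claim that a fixed-point argument ``produces a non-trivial element of $C_V(x)$ for every non-central $x$'', but there is no such argument: a non-central element can perfectly well act fixed-point-freely on a minimal normal $p$-subgroup $V$ (this is exactly what Frobenius complements do), and triviality of $Z(G)$ gives no control over $C_V(x)$ for an individual $x$. Worse, the conclusion you draw, $\d(x,y)\le 4$ whenever $F(G)\neq 1$, is false outright: there exist soluble groups with trivial centre (so $F^*(G)=F(G)\neq 1$) whose commuting graph is connected of diameter $6$ \cite{G} and even of diameter $8$ \cite{parker} --- both cited in this paper. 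The paper's own treatment of this case (Lemmas~\ref{FarApartCloseToF} and \ref{Fnot1}) is necessarily weaker and more delicate: it does not put every vertex near $Z(F(G))$, but shows only that if two vertices are at finite distance greater than $4$, then \emph{each} is within distance $4$ of $Z(F(G))^{\#}$, the key tool being the structure of Frobenius complements (a unique involution in the even-order case, Lemma~\ref{meta}); this yields the bound $9$, not $4$.

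The almost simple case is also not viable as sketched. Your anchor is a maximal torus $A$ with the two properties (i) every non-central element commutes with a non-trivial element of some conjugate of $A$, and (ii) conjugates of $A$ are linked through ``overlaps of their normalisers''. Property (ii) is not a mechanism at all: paths in the commuting graph require commuting elements, i.e.\ elements of centralisers, and normaliser intersections give none. More fundamentally, the commuting graph of an almost simple group of Lie type is frequently \emph{disconnected}, and the obstruction is precisely that certain maximal tori are isolated subgroups (Lemma~\ref{isolated}, Lemma~\ref{strongly2}); so no statement of the form ``any two conjugates of $A$ are linked'' can hold, and one must instead bound the diameter of each component separately, with the components containing no involutions handled by Williams' theorem (Theorem~\ref{Williams}: they are cliques) --- an issue your proposal never engages with. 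The paper's anchor is different and is what makes the argument work: the class $\mathcal R$ of long root elements, for which any two members are at distance at most $2$ (via fundamental $\SL_2(r^a)$-subgroups having non-trivial centralisers, Lemmas~\ref{rootgroups} and \ref{rootelements}), together with the hard estimate $\d(x,\mathcal R)\le 4$ for every vertex $x$ of the component containing $\mathcal R$ (Proposition~\ref{concomp10}, plus Lemmas~\ref{outside} and \ref{outside2} for elements outside $K$), and separate arguments for $\PSL_2(r^a)$, the Suzuki--Ree groups and $\PSL_3(4)$, where long root elements are unavailable or behave badly. Without a substitute for these ingredients, and without any treatment of disconnected components, your outline does not yield the bound $10$.
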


Evidence that a theorem such as Theorem~\ref{MainTheorem} must be true has been growing for over a decade.
In 2002,  Segev and Seitz  showed   that the commuting graph of a classical simple group defined over a field of order greater than $5$ is either disconnected or has  diameter at most $10$ and at least $4$ \cite[Corollary (pg. 127), Theorem 8]{SS}. They also proved that the commuting graph of the simple exceptional Lie type groups other than $\mathrm E_7(q)$ and the sporadic simple groups are disconnected \cite[Theorem 6]{SS}. In 2008,  as  remarked above,  Iranmanesh and  Jafarzadeh demonstrated that  the commuting graph of  the almost simple groups $\mathrm{Sym}(n)$ and $\mathrm {Alt}(n)$ is either disconnected or has diameter  at most $5$ \cite{I}.  In 2010, Giudici and Pope \cite{G1} modified the approach of Segev and Seitz \cite{SS} and showed that for  $G=\mathrm{GL}_n(2)$, $n \ge 5$,  if $\Gamma(G)$ is connected then it has diameter at most 8 and, by computing specific commuting matrices, they demonstrate that for $G=\mathrm{GL}_n(r)$, $r$ an odd prime, if $\Gamma(G)$ is connected then it has diameter at most 6.
More recently in 2012, Giudici and Pope \cite{G}  investigated some further special cases  and showed that various kinds of finite groups had bounded diameter commuting graph. They also constructed a soluble group with commuting graph of diameter $6$. This was  followed  by the second author of this paper in  \cite{parker}  in which it is proved that the commuting graph of a finite soluble group with trivial centre  is either disconnected or has diameter at most 8. Furthermore in \cite{parker} finite soluble groups with trivial centre and connected commuting graph of diameter 8 are presented.

Suppose that $G$ is a non-abelian finite group. The \emph{prime graph} of $G$, $\pi(G)$,  has vertex set the set of primes dividing $|G|$, and two primes $r$ and $s$  are connected if and only if $G$  has an element of order $rs$.
The prime graph was introduced by Gruenberg and Kegel in  \cite{GK} where they proved  the first results about its  connectivity. In particular, they partly described the structure of groups with disconnected prime graph and showed that the soluble groups with disconnected prime graph are either Frobenius groups or $2$-Frobenius groups.  The non-abelian simple groups with disconnected prime graph were determined by Williams \cite{Williams} and  Iiyori and Yamaki \cite{Iiyori} (see also Kondrat\'ev \cite{Kondratev}) and Lucido \cite{Lucido} worked out which of the almost simple groups have connected prime graphs. Here we mention that a long paper by Suzuki \cite{Suzuki} investigates the prime graph using methods from the odd order theorem.  It turns out that in finite groups $G$ with trivial centre the $G$-classes of connected components of the commuting  graph are in one to one correspondence  with the  connected components of the prime graph (see Theorem~\ref{primegraph} and \cite[Lemma 4.1]{I}). In particular, $\Gamma(G)$ is connected if and only if $\pi(G)$ is connected and consequently the almost simple groups with connected commuting graphs are known because of the work of Williams \cite{Williams} and Lucido \cite{Lucido}.

Our proof of Theorem~\ref{MainTheorem} first reduces the problems to one about the almost simple groups. This is quickly accomplished in Section~\ref{almostsec} and leaves the harder task of bounding the diameter of a connected component in an almost simple group.  To achieve this we take advantage of the work of Williams \cite{Williams} (see Theorem~\ref{Williams}) where he studied the connectivity of the prime graph. The result is that all the connected components of the commuting graph of a non-soluble group which do not contain elements of order $2$ have diameter one.
Our plan of  attack for determining the diameter of the commuting graph of the finite almost simple groups is as follows. For the sporadic simple groups we use the character tables and for the symmetric and alternating groups an elementary lemma, Lemma~\ref{involutionsgood}, about the distance apart of involutions in $\Gamma(G) $  in groups with at least two conjugacy classes of involutions (they are at most $3$ apart) can be deployed to show that the diameter of a connected component is at most $8$. In the case that $\Gamma$ is connected, we could use \cite{I} to get that the diameter is at most $5$. The meat of the paper is to address the problem of bounding the diameter of  the connected components of the commuting graphs of the almost simple groups of Lie type defined in characteristic $r$. The linear groups $\PSL_2(r^a)$ together with the Suzuki-Ree groups require special arguments. For the remaining groups we adapt and  extend the arguments of Segev and Seitz  \cite{SS} to the case where  the commuting graph of $G$ with $F^*(G)$ a simple group of Lie type in characteristic $r$ is possibly disconnected. The theorem follows from the fact that any two long root elements have distance at most $2$ apart and every element of $G$ has distance at most $4$ from a long root element.

 It is beyond doubt that the bounds on the diameter of $\Gamma(G)$ that we obtain for the almost simple groups can be improved upon in many cases. However as the bounds we obtain  suffice to prove Theorem~\ref{MainTheorem}, we have chosen not to pursue more accurate estimates.  It would be interesting to know the actual values for the diameters of the connected components commuting graphs of the sporadic simple groups for example. We also mention that the commuting graph of  ${}^2\B_2(2^5){:}5$ has a connected component which has diameter at least 8.

In a different direction, Solomon and Woldar \cite{SolomonWoldar} have recently proved that if $S$ is a finite simple group and $G$ is an arbitrary group with $\Gamma(S) \cong \Gamma(G)$, then $S \cong G$. Thus the commuting graph of a simple group, determines the simple group uniquely.

The paper is organized as follows. In Section~\ref{preliminaries} we collect some background results on Frobenius groups which are required in Section~\ref{almostsec} and groups of Lie type that are frequently used in Section~\ref{Lie}. In Section \ref{prime}, we establish some elementary results about the commuting graph and prove the aforementioned theorem relating the commuting graph and the prime graph. Section~\ref{isolatedsec} is devoted to  the study of isolated subgroups, their relationship with strongly $p$-embedded subgroups and in particular we modestly improve the statement of \cite[Theorem 3]{Williams} in Theorem~\ref{Williams}. Finally, in Section~\ref{isolatedsec}, we  show that in a non-abelian simple group, isolated subgroups are for the most part  cyclic.   In Section~\ref{almostsec}, we prove the reduction theorem to simple groups. The next  three  sections establish the theorem for the finite simple groups. In a short final section we prove the main theorem.

Our group theoretic notation is mostly standard and follows that in \cite{Aschbacher, Gor}. In particular we mention that for a group $G$, $G^\#$ is the set of non-identity elements of $G$.  For $x \in G$, the conjugacy class of $x$ in $G$ is denoted by $x^G=\{x^g\mid g \in G\}$. If  $\{x,y\}$ is an edge in $\Gamma(G)$ or, if $ x=y$, then we write $x \sim y$. In particular, $x \sim y$ indicates that $x,y \in G \setminus Z(G)$. If $x $ and $ y $ are vertices in $\Gamma(G)$, then $\mathrm d(x,y)$ denotes the distance between $x$ and $y$. We apply similar convenient conventions to the prime graph of $G$. Thus, for primes $r,s$, $r \sim s$ means that either $r=s$ or there is an element of $G$ of order $rs$.  For subsets of vertices  $\mathcal S$ of $\Gamma(G)$ we define $$\d(x,\mathcal{S})=\min\{\d(x,r)\mid r \in \mathcal{S}\}.$$

\section{Preliminaries}\label{preliminaries}

In this section, we provide the fundamental results that we need from \cite{GLS3} and \cite{Hu}.

\begin{lemma}\label{meta} Suppose $X$ is a Frobenius complement. Then every Sylow subgroup of $X$  is cyclic or
generalized quaternion. If $X$  has odd order then any  two elements of   prime order  commute and  $X$ is metacyclic.  If $X$ has even order, then $X$ contains a unique involution.
\end{lemma}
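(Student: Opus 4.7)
The plan is to exploit the defining property of a Frobenius complement: $X$ admits a faithful action on some non-trivial group $K$ such that $C_K(x)=1$ for every $x\in X^\#$, i.e.\ every non-identity element of $X$ acts fixed-point-freely on $K$. With this in hand the argument splits into four pieces.

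\textbf{Step 1: Abelian subgroups are cyclic.} Suppose for a contradiction $A\le X$ is abelian and contains a subgroup $E\cong C_p\times C_p$. Each of the $p+1$ subgroups of order $p$ of $E$ acts fixed-point-freely on $K$, so by a Burnside-style orbit count the number of $E$-fixed points can be computed two ways: from the fact that only the identity fixes anything, and from summing contributions of the $p+1$ maximal subgroups of $E$. Comparing the two expressions modulo $p$ yields a contradiction with $|K|\ge 2$. Hence every abelian subgroup of $X$ is cyclic.

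\textbf{Step 2: Sylow structure.} A classical theorem (Huppert, III.8.2) states that a $p$-group in which every abelian subgroup is cyclic is itself cyclic when $p$ is odd, and is cyclic or generalized quaternion when $p=2$. Applied Sylow-subgroup by Sylow-subgroup, Step~1 yields the first assertion of the lemma. In particular when $|X|$ is even the Sylow $2$-subgroup has a unique involution.

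\textbf{Step 3: Odd order case.} Assume $|X|$ is odd. Every Sylow subgroup of $X$ is cyclic, so $X$ is a $Z$-group; by the Hölder--Burnside--Zassenhaus theorem (Huppert, IV.2.11) such a group is metacyclic. For the commuting assertion, take $x,y\in X$ of prime orders $p,q$. The subgroup $\langle x,y\rangle$ is again a Frobenius complement (subgroups of Frobenius complements are Frobenius complements) and has only cyclic Sylow subgroups. If $p=q$ both lie in a common cyclic Sylow $p$-subgroup and commute. If $p\ne q$ then $\langle x,y\rangle$ has order dividing a product of prime powers with cyclic Sylows; were it non-abelian, one of $x,y$ would invert (a power of) the other, producing an element of prime order acting with a non-trivial fixed point on $K$ via the coprime action $C_K(x)=[C_K(x),\langle y\rangle]$, contradicting the fixed-point-free property. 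Hence $x$ and $y$ commute.

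\textbf{Step 4: Uniqueness of the involution.} Assume $|X|$ is even and suppose, for a contradiction, $s,t$ are distinct involutions in $X$. Then $D=\langle s,t\rangle$ is dihedral of order $2n$ with $n\ge 2$. If $n$ is even then $D$ contains an involution $u$ central in $D$ and distinct from at least one of $s,t$; but $u$ lies in some Sylow $2$-subgroup of $X$ together with (a conjugate of) that involution, contradicting the uniqueness in Step~2. If $n$ is odd then $s$ and $t$ are conjugate in $D$ by the rotation $r$ of order $n$, so $r$ inverts $s$; applying fixed-point-free action to the coprime pair $(r,s)$ again forces a non-trivial fixed point on $K$, a contradiction. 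Thus $X$ has a unique involution.

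The principal obstacle is Step~1: once abelian subgroups are shown to be cyclic, Steps~2--4 are standard structural consequences that are catalogued in \cite{Hu} and \cite{GLS3}, so in the final write-up they can be invoked as black boxes from those references.
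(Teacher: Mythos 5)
Your overall plan (exploit the fixed-point-free action of $X$ on a kernel $K$) is the classical route, and your fallback of quoting \cite{Hu} for the structural statements is exactly what the paper itself does: its proof is nothing more than citations of Satz V.8.15, V.8.16(b) and V.8.18(a) of \cite{Hu}. The trouble is that the arguments you actually sketch for the hard steps do not work as stated. In Step 1, the ``Burnside-style'' count gives no contradiction: every relevant fixed-point set ($C_K(E)$ and each $C_K(Z_i)$ for the $p+1$ subgroups $Z_i$ of order $p$) is trivial, and comparing $1$ with a sum of $p+1$ ones modulo $p$ is perfectly consistent. What is needed is either the coprime-action generation/product statement $K=\langle C_K(Z_i)\rangle$ (equivalently $|K|=\prod_i|C_K(Z_i)|/|C_K(E)|^p$), or a reduction -- Frattini argument to an $E$-invariant Sylow subgroup of $K$, then pass to its centre -- to an $E$-invariant abelian subgroup on which the norm maps $k\mapsto\prod_{z\in Z_i}k^z$ kill everything, forcing $K=1$. (You also need, and do not mention, that $\gcd(|X|,|K|)=1$, which follows since $X$ is semiregular on $K^\#$, so $|X|$ divides $|K|-1$.)

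Step 3 has two genuine gaps. For equal primes you assert that two elements of order $p$ lie in a common Sylow $p$-subgroup; this is equivalent to the uniqueness of the subgroup of order $p$ in a Frobenius complement, which is part of what must be proved and does not follow from the Sylow subgroups being cyclic (compare $\mathrm{Sym}(3)$, or the nonabelian group of order $21$, which have cyclic Sylow subgroups but many subgroups of prime order). For distinct odd primes, in a nonabelian group of order $pq$ neither generator inverts the other (the action has odd prime order), and the identity $C_K(x)=[C_K(x),\langle y\rangle]$ is vacuous because $C_K(x)=1$ by hypothesis; the genuine obstruction is that a nonabelian group of order $pq$ has no fixed-point-free module (its faithful irreducibles are induced, so the restriction to a complement is regular and contains a trivial summand), and nothing of this kind appears in your sketch. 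The same vagueness affects the $n$ odd case of Step 4: the correct argument is that on an invariant abelian subgroup of $K$ the involution $s$ must act as inversion (since $v+s(v)$ is fixed, hence zero), inversion is central in the automorphism group, and so the odd-order commutator $[r,s]$ centralises a nontrivial subgroup of $K$, a contradiction. In short: either supply these missing arguments, or do as the paper does and simply invoke \cite[S\"atze 8.15, 8.16, 8.18]{Hu}; but as written, Steps 1, 3 and the odd case of Step 4 are not proofs.
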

\begin{proof}
The Sylow subgroups of $X$ are described in \cite[ Satz 8.15, p. 505]{Hu}. Suppose that $X$ is soluble. Let $r$ and $s$ be primes dividing $|X|$ and $a$ and $b$ be elements of order $r$ and $s$ respectively. If $X$ has odd order then $a$ and $b$ commute by \cite[ Satz 8.16 (b), p. 506]{Hu}. If $X$ has even order, see \cite[Satz 8.18 (a), p.506]{Hu}.
\end{proof}

We will also need the following well-known lemmas. Their easy proofs can be found in \cite{parker} (as well as many other places).

\begin{lemma}\label{Frob} Suppose that $X$ is a group and $X= JK$ with $J$ a proper normal subgroup of $X$ and $K$ a complement to $J$. Then  $C_X(k) \le K$ for all $k \in K^\#$ if and only if  $X$ is a Frobenius group.\qed
\end{lemma}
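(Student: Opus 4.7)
The plan is to prove both implications using the standard characterisation of a Frobenius group with complement $K$: namely that $K \cap K^x = 1$ for every $x \in X \setminus K$. Once this ``malnormality'' of $K$ is in hand, the identification of $J$ as the Frobenius kernel is automatic because $X = JK$ and $J \cap K = 1$ force $|J| = |X|/|K|$, which matches the order of the kernel.

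For the forward implication, assume $C_X(k) \le K$ for every $k \in K^\#$. Suppose for contradiction that $K \cap K^x \ne 1$ for some $x \in X \setminus K$, and pick $1 \ne k \in K \cap K^x$, so $xkx^{-1} \in K$ as well. Using $X = JK$ with $J \cap K = 1$, write $x = jk_0$ uniquely with $j \in J$ and $k_0 \in K$, and set $k' := k_0 k k_0^{-1} \in K^\#$. The computation $xkx^{-1} = jk'j^{-1}$ shows $jk'j^{-1} \in K$, while the normality of $J$ gives $(jk'j^{-1})(k')^{-1} \in J$. Hence $(jk'j^{-1})(k')^{-1} \in J \cap K = 1$, so $j$ centralises $k'$. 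The hypothesis then forces $j \in K$, so $j \in J \cap K = 1$; but then $x = k_0 \in K$, contradicting $x \notin K$. Thus $K$ is malnormal in $X$ and $X$ is Frobenius with complement $K$ and kernel $J$.

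For the reverse implication, assume $X$ is a Frobenius group with complement $K$. Let $k \in K^\#$ and $y \in C_X(k)$. Since $y$ commutes with $k$, we have $k = yky^{-1} \in K^y$, so $k \in K \cap K^y$ with $k \ne 1$; malnormality of $K$ forces $y \in K$, giving $C_X(k) \le K$ as required.

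I expect the main (mild) obstacle to be the bookkeeping in the forward direction, where one has to select the correct element of $K$ to centralise by a prospective element of $J$; the use of the unique decomposition $x = jk_0$ coming from $X = JK$ with $J \cap K = 1$ is exactly what makes this bookkeeping go through. Everything else is routine manipulation of cosets, and no machinery beyond the definition of a Frobenius group and Lemma~\ref{meta}-style elementary facts is needed.
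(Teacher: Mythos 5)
Your proof is correct and is essentially the standard argument: the paper itself states Lemma~\ref{Frob} without proof (deferring to \cite{parker}), and the proof there is the same computation you give, namely deducing malnormality of $K$ from the centraliser hypothesis via the decomposition $x=jk_0$ and the observation $jk'j^{-1}(k')^{-1}\in J\cap K=1$, and conversely reading $C_X(k)\le K$ off from $K\cap K^y=1$. The only loose point is your side remark that $J$ is ``automatically'' the Frobenius kernel because the orders match --- equal orders alone do not identify two normal subgroups; the correct one-line reason is that $J\cap K^x=(J\cap K)^x=1$ for every $x$, so $J$ meets no conjugate of $K$ and hence lies in the kernel, after which the order count gives equality --- but since the lemma only asserts that $X$ is a Frobenius group (with the intended reading ``with kernel $J$ and complement $K$'', which you correctly use in the reverse direction), this does not affect the validity of your proof.
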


\begin{lemma}\label{Frob2} Suppose that $J$ is a proper normal subgroup of $X$. Then $X$ is a Frobenius group if and only if $C_X(j) \le J$ for all $j \in J^\#$.\qed
\end{lemma}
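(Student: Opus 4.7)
My plan is to treat the two directions separately, and in both cases to reduce to Lemma~\ref{Frob} via the obvious semidirect decomposition. For the forward direction, assume $X$ is a Frobenius group with kernel $J$ and complement $K$, so that every non-identity element of $X$ lies either in $J$ or in a unique conjugate $K^g$, and $J \cap K^g = 1$ for every $g \in X$. Take $j \in J^\#$ and $x \in C_X(j)$, and suppose $x \notin J$. Then $x \in K^g$ for some $g$, and applying Lemma~\ref{Frob} to the decomposition $X = J K^g$ yields $C_X(x) \le K^g$. Consequently $j \in C_X(x) \cap J \le K^g \cap J = 1$, contradicting $j \ne 1$.

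For the converse, assume $J \lhd X$ is proper and $C_X(j) \le J$ for every $j \in J^\#$. The first step is to show $\gcd(|J|, |X:J|) = 1$ so that Schur--Zassenhaus supplies a complement $K$ with $X = J \rtimes K$. If some prime $p$ divided both $|J|$ and $|X:J|$, pick $P \in \Syl_p(X)$; then $P \cap J$ is a Sylow $p$-subgroup of $J$ which is normal in $P$, so $Z(P) \cap J \ne 1$. Choosing $1 \ne z \in Z(P) \cap J$, the hypothesis gives $P \le C_X(z) \le J$, which contradicts $p \mid |X:J|$.

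It remains to verify the hypothesis of Lemma~\ref{Frob}, namely that $C_X(k) \le K$ for every $k \in K^\#$. Given $x \in C_X(k)$, write $x = j k'$ uniquely with $j \in J$ and $k' \in K$; the relation $xk = kx$ rewrites as $j (k'k) = (kjk^{-1})(kk')$, and uniqueness of the semidirect product decomposition forces $j = kjk^{-1}$, i.e.\ $k \in C_X(j)$. If $j \ne 1$ then $k \in C_X(j) \cap K \le J \cap K = 1$, contradicting $k \ne 1$; therefore $j = 1$ and $x = k' \in K$, and Lemma~\ref{Frob} completes the argument. The only non-bookkeeping step is the Sylow argument at the start of the converse; once coprimality of $|J|$ and $|X:J|$ is known, everything else reduces to elementary manipulations in the semidirect product.
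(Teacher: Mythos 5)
Your proof is correct, and it follows what is surely the standard argument: the paper itself gives no proof of Lemma~\ref{Frob2} (it is stated with a \qed and deferred to \cite{parker}), so there is nothing to compare line by line, but your route --- forward direction from the Frobenius partition via Lemma~\ref{Frob}, and for the converse the centre-of-a-Sylow argument to get $\bigl(|J|,|X{:}J|\bigr)=1$, Schur--Zassenhaus to produce a complement $K$, and the uniqueness-of-decomposition computation to verify $C_X(k)\le K$ so that Lemma~\ref{Frob} applies --- is exactly the classical proof one expects to find in the cited source. The only caveat is one inherited from the paper's loose wording rather than from your argument: as literally stated the lemma needs $J\ne 1$ (otherwise the centraliser condition is vacuous), and the forward implication is only true when ``Frobenius group'' is read as ``Frobenius group with kernel $J$'' (a proper normal subgroup strictly inside the kernel gives a counterexample); you have silently adopted this intended reading, which is also how the lemma is used later in the paper, so no genuine gap results, but it would be worth one sentence acknowledging that interpretation.
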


In Section \ref{Lie} we  require information about centralisers of automorphisms in Lie type groups. We adopt notation and terminology from \cite{GLS3}. In particular, for the description of the types of automorphism see \cite[2.5.13]{GLS3} and \cite[4.1.8]{GLS3}.  A pivotal role in our demonstration that the diameter of the commuting graphs of the Lie type groups defined in characteristic $r$ have small diameter is played by special elements which are called \emph{long root} elements.  Suppose that $K$ is a finite simple group of Lie type which is not a Ree or Suzuki group.  We follow \cite[Definition 2.2.1]{GLS3} and let $(\overline K,\sigma)$ be a $\sigma$-setup for $K$; here $\ov K$ is an adjoint simple algebraic group and $\sigma$ is a Steinberg endomorphism of $\ov K$.  See \cite[Example 3.2.6]{GLS3} for fuller version of the following discussion.
 We know $K=O^{r'}( C_{\ov K}(\sigma))$.  Let $\ov B$  and $\ov T$ be a $\sigma $-invariant  Borel subgroup of $\ov K$ and Torus of $\ov K$ contained in $\ov B$ respectively.  Also let $\Sigma$ be the root system of $\ov K$  (with respect to $\ov B$ and $\ov T$)
with fundamental system $\Pi$ and $\rho$ be the highest root of $\Sigma$ with respect to $\Pi$.  Then $P_\rho= N_{\ov K}(\ov X_\rho) \ge \ov B$ is a parabolic subgroup of $\ov K$. Furthermore, as $\ov B$ and $\ov T$ are $\sigma$-invariant and $\sigma$ does not permute root lengths (otherwise $K$ would be a Ree or Suzuki group), we see that $\ov X_{\rho}$ is $\sigma$-invariant.  We have $X_{\rho} = C_{\ov{X_\rho}}(\sigma)$ is an elementary abelian group of order $r^a= q(\ov K, \sigma)$ (again see \cite[Theorem 2.1.12]{GLS3}).  The  non-trivial elements  of $X_{\rho}$ are  \emph{long root elements}. In \cite[Example 3.2.6 and Proposition 3.2.9]{GLS3} the subgroups of $K$ which are $K$-conjugate to $X_{\rho}$ are called \emph{long root subgroups}. We will adhere to this nomenclature.

 Alternative definitions of  long root elements are given in other sources and this is the primary reason for the above discussion. Of course the set of elements is the same. For example, the structure of the ``twisted" root subgroups in $K$ are described in \cite[Theorem 2.4.1]{GLS3}. If $X_{\hat \alpha}$ is a ``long root subgroup" (which now need not be abelian), then the non-trivial elements of $Z(X_{\hat \alpha})$ are called long root elements of $K$.

 Let $S = \langle X_\rho, X_{-\rho}\rangle$. Then, so long as $K \not \cong \PSL_2(r^a)$ and $K$ is not a Ree or Suzuki group,  $S \cong \SL_2(r^a)$  and the $K$-conjugates of $S$ are called \emph{fundamental $\SL_2(r^a)$-subgroups} of $K$ \cite[Definition 3.2.7]{GLS3}.

\begin{lemma}\label{rootgroups}
Suppose  $K$ is a simple group of Lie type defined in characteristic $r$  such that $K \not \cong \PSL_2(r^a)$ and $K$ is not a Ree or Suzuki group. Then
\begin{enumerate}
\item  $K$ has one conjugacy class of long root subgroups;
\item  The  conjugacy class of long root subgroups of $K$ is invariant under $\mathrm{Aut}(K)$ unless $K \cong \mathrm B_2(2^a) \cong \mathrm {Sp}_{4}(2^a)$, $\mathrm F_4(2^a)$ or $\mathrm G_2(3^a)$.
\item   Either $K$ has exactly one conjugacy class of long root elements or $K\cong \mathrm{PSL}_2(r^a) \cong \PSp_2(r^a)$ or $\mathrm C_{n}(r^a)\cong \mathrm{PSp}_{2n}(r^a)$, $n \ge 2$, with $r$ odd and $K$ has exactly two classes of long root elements.
\item If $x, y \in K$ are long root elements, then either $\langle x, y \rangle $ is an $r$-group or $\langle x,y \rangle$ is contained in a fundamental $\SL_2(r^a)$ of $K$.
\item The centralizer of a fundamental $\SL_2(r^a)$ is non-trivial unless $K \cong \PSL_3(2)$  or $K\cong \PSL_3(4)$. Furthermore, if  the root system of $\overline K$ is not $\mathrm A_2$, then $S$ commutes with an element of order $r$.
\end{enumerate}
\end{lemma}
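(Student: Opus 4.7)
The plan is to deduce each claim from standard structural facts about finite groups of Lie type collected in \cite{GLS3}, in particular the description of root subgroups (\cite[Theorem 2.4.1]{GLS3}), the Bruhat decomposition and Chevalley commutator relations, and the classification of automorphisms in \cite[Theorem 2.5.12]{GLS3}. Throughout I work with the $\sigma$-setup $(\ov K,\sigma)$ fixed in the excerpt and transfer statements from $\ov K$ to $K=O^{r'}(C_{\ov K}(\sigma))$.

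For (i), the hypothesis that $K$ is not a Ree or Suzuki group means that $\sigma$ preserves the partition of $\Sigma$ into long and short roots, so in $\ov K$ the set of long root subgroups is a single $\ov K$-orbit (the Weyl group $W=N_{\ov K}(\ov T)/\ov T$ acts transitively on long roots), and each is $\sigma$-invariant up to $\ov K$-conjugacy. Taking $\sigma$-fixed points yields a single $K$-class of long root subgroups. For (ii), I would appeal to the generation of $\mathrm{Aut}(K)$ by inner, diagonal, field and graph automorphisms. Inner, diagonal and field automorphisms preserve root lengths by construction, while a graph automorphism preserves root lengths unless it arises from a length-exchanging symmetry of the Dynkin diagram; the only such symmetries yielding automorphisms of a \emph{simple} group occur in characteristic equal to the ratio of root-length-squares, which singles out precisely $\B_2(2^a)$, $\mathrm F_4(2^a)$ and $\mathrm G_2(3^a)$.

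For (iii), I would study the action of $N_K(X_\rho)$ on $X_\rho\cong \F_{r^a}^+$. The torus $\ov T\cap S_\rho$ acts on $X_\rho$ by multiplication by $h^{\langle \rho,\rho^\vee\rangle}=h^2$; so for $r=2$ the action is transitive on $X_\rho^{\#}$, and for $r$ odd the orbit is $(\F_{r^a}^\times)^2 \cdot x_\rho(1)$. In the non-$\PSp$ cases one recovers the full transitivity by using another torus element coming from a root adjacent to $\rho$ in the extended Dynkin diagram, yielding a multiplier that is a non-square; this fails precisely in type $\mathrm C_n$ (including $\mathrm C_1=\mathrm A_1$) with $r$ odd because then every simple root adjacent to $\rho$ is short, giving two $K$-orbits. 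For (iv), given long root elements $x\in X_\alpha$, $y\in X_\beta$, if $\beta=\pm\alpha$ then $\langle x,y\rangle\le S_\alpha$ which is (contained in) a fundamental $\SL_2$; otherwise the rank-$2$ subsystem $\Sigma_{\alpha,\beta}$ is of type $\mathrm A_1\times\mathrm A_1$, $\mathrm A_2$, $\mathrm B_2$ or $\mathrm G_2$, and in each case a direct application of the Chevalley commutator formula (using that $\alpha,\beta$ are long) shows that either $\alpha+\beta\in\Sigma$ is a root so $\langle X_\alpha,X_\beta\rangle$ is unipotent and hence an $r$-group, or $\alpha$ and $\beta$ are orthogonal and the roots between them force the same conclusion.

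For (v), I would use that $C_K(S)$ contains $\langle X_\gamma \mid \gamma\in\Sigma,\ \gamma\perp\rho\rangle^{\sigma}$; whenever the root system of $\ov K$ is not $\mathrm A_2$, the sub-root system $\rho^\perp$ is non-empty (look at the extended Dynkin diagram: removing $\rho$ leaves at least one orthogonal simple root except for $\mathrm A_1$ and $\mathrm A_2$), so $C_K(S)$ contains a non-trivial $r$-element. For the $\mathrm A_2$ case (i.e.\ $K\cong\PSL_3(r^a)$ or $\PSU_3(r^a)$), $C_K(S)$ lies in the Levi centraliser $\ov T^\sigma\cap C$; a direct computation of its order shows it is non-trivial unless $q\le 2$ in the linear case or $q=2$ in the unitary case, isolating $\PSL_3(2)$ and $\PSL_3(4)\cong \PSU_3(2)'$? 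I would check the unitary small cases carefully against the given statement. The main obstacle is the case analysis in (iii) and (v): confirming that squaring in $\F_{r^a}^\times$ together with the action of extended-Weyl-group elements really does give the claimed dichotomy, and in (v) ruling out spurious small groups beyond $\PSL_3(2)$ and $\PSL_3(4)$ through explicit computation of Levi centralisers.
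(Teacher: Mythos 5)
Your outline takes a genuinely more computational route than the paper, which for parts (i)--(iv) essentially quotes \cite[Example 3.2.6, Theorem 3.2.8 and the discussion on p.~103]{GLS3}, adding only a short supplementary argument for (i), and argues (v) directly. That comparison exposes the places where your sketch has real gaps rather than routine details. In (i), the step ``taking $\sigma$-fixed points yields a single $K$-class'' is precisely what needs proof: a single $\ov K$-orbit of $\sigma$-invariant subgroups may split into several orbits of the fixed-point group, and even granting (by Lang--Steinberg, using that the stabiliser $P_\rho=N_{\ov K}(\ov X_\rho)$ is a connected parabolic, or by citing \cite{GLS3}) that $C_{\ov K}(\sigma)$ is transitive, this group is the full inner-diagonal group and can be strictly larger than $K$; one must still descend to $K$, which the paper does via $C_{\ov K}(\sigma)=KC_{\ov T}(\sigma)$ together with the fact that $C_{\ov T}(\sigma)$ normalises $X_\rho$. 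That this descent is not vacuous is shown by your own part (iii): the diagonal torus is transitive on $X_\rho^\#$ in type $\mathrm C_n$ with $r$ odd while $K$ is not, and for the same reason the torus elements you use in (iii) must be checked to lie in $K$ (and, for the twisted groups, the computation must be done with the twisted tori and twisted root subgroups), none of which your sketch addresses.

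In (iv) you assume at the outset that the two long root elements lie in root subgroups $X_\alpha$, $X_\beta$ for roots $\alpha,\beta$ of one root system relative to a common maximal torus. Reducing an arbitrary pair of long root subgroups to such a common-apartment configuration is the substantive content of the statement (and of the \cite{GLS3} result the paper leans on); once that is granted the rank-two commutator-relation check is routine, but without it the argument does not start, and for the twisted groups the check would anyway have to use the twisted commutator relations. Finally, in (v) your non-$\mathrm A_2$ argument is exactly the paper's, but the $\mathrm A_2$ case is left open and contains an error: $\PSL_3(4)$ has nothing to do with $\PSU_3(2)'$. The correct small-case analysis (as in the paper) is that $C_K(S)$ contains the image of $Z(\mathrm{GL}_2(r^a))$, respectively $Z(\mathrm{GU}_2(r^a))$, of order $(r^a-1)/(3,r^a-1)$, respectively $(r^a+1)/(3,r^a+1)$, which is trivial exactly for $\PSL_3(2)$, $\PSL_3(4)$ and the non-simple $\PSU_3(2)$; for $r$ odd one can instead simply note $Z(S)\neq 1$.
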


\begin{proof} The first three points are discussed in \cite[Example 3.2.6]{GLS3}. However, there they only state that $C_{\ov K}(\sigma)$ acts transitively on the long root subgroups of $K$. None-the-less, by \cite[Theorem 2.2.6 (g)]{GLS3}, $C_{\ov K}(\sigma) = KC_{\ov T}(\sigma)$ and $C_{\ov T}(\sigma)$ normalizes $X_{\rho}$. Hence $K$ has one conjugacy class of long root subgroups. So (i) holds.

Parts (ii) and (iii) are explained in \cite[Page 103, paragraph 3 and paragraph -3]{GLS3}.

For part (iv) we let $S$ be a fundamental $\SL_2(r^a)$.  If $r$ is odd, then $Z(J) \ne 1$ and we are done. So assume that $r=2$.  Then, by \cite[Theorem 3.2.8]{GLS3} $N_K(S)= C_K(S)S$ and,   in addition, setting $J= -\rho ^\perp \cap \Pi$, the group $\langle X_\tau \mid \tau \in \pm J\rangle$ centralizes $S$. Therefore, as long as $\Sigma$ is not $\mathrm A_2$, we have $C_K(S) \ne 1$.  If $\Sigma = \mathrm  A_2$, then $K$ is either $\PSL_3(r^a)$ or $\PSU_3(r^a)$ and  the corresponding  universal groups contain subgroups isomorphic to $\mathrm{GL}_2(r^a)$ and $\mathrm{GU}_2(r^a)$. These subgroups project to subgroups of $K$ containing conjugates of $S$. Especially the centres of these subgroups have order $r^a-1$ and $r^a+1$ respectively. Thus in $K$, $S$ has a non-trivial centralizer whenever $r^a-1 > 3$ when $K \cong \PSL_3(r^a)$ or $r^a+1 >3$ when $K \cong \PSU_3(r^a)$. Since $\PSU_3(2)$ is soluble, we have proved our result.
\end{proof}

\begin{lemma}
\label{lem:lietypecents}
Let $K$ be a group of Lie type defined over a field of characteristic $r$ and $x$ be an automorphism of $K$ of prime order $p$ with $p\neq r$. Write $K={}^d\Sigma(q)$,  $C=C_K(x)$, $L=O^{r'}(C)$ and let $Z$ be the kernel of the covering $K_u \rightarrow K$ ($K_u$ the universal group). The following hold.
\begin{itemize}
\item[(a)] If $x$ is a field automorphism, then $L \cong {}^d\Sigma(q^{\frac{1}{p}})$  and $L$ contains a long root element of $K$.
\item[(b)] If $x$ is a graph-field automorphism, then $d=1$, $p=2$ or $3$ and $L\cong {}^p\Sigma(q^{\frac{1}{p}})$.
\item[(c)] If $x$ is an inner-diagonal or graph automorphism, then $L$ is a central product of groups of Lie type defined over fields of characteristic $r$,  there is an abelian $r'$-subgroup $T$ such that $C/LT$ is an elementary abelian $p$-group isomorphic to a subgroup of $Z$.
\item[(d)] If $x$ is of equal-rank type, then $|C_C(L)/(C \cap Z(K))|\leq p$.
\item[(e)] If $x$ is of graph type, then $C_C(L) \leq Z(K)$.
\end{itemize}
\end{lemma}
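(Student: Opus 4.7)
The plan is to assemble each clause from the detailed structure theorems for centralisers of automorphisms of finite groups of Lie type collected in Chapter~4 of \cite{GLS3}, together with the classification of the four flavours of outer automorphism (inner-diagonal, field, graph, graph-field) in \cite[Theorem 2.5.12]{GLS3}. Since $p\neq r$, the automorphism $x$ is, up to conjugation, one of these four types, and in every case the $\sigma$-setup point of view means we can replace $K$ by its adjoint version and read off the centraliser from the corresponding structure theorem in the algebraic group $\overline K$, then descend via $K=O^{r'}(C_{\overline K}(\sigma))$.

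For (a), a field automorphism of order $p$ forces $q=q_{0}^{p}$ and the fixed-point subgroup of $x$ in $\overline K$ is a $\sigma$-invariant reductive subgroup of the same type as $\overline K$; passing to the finite level \cite[Theorem 4.9.1]{GLS3} gives $L\cong{}^{d}\Sigma(q_{0})$. Since $L$ shares its Dynkin diagram with $K$, its long root subgroup $X_{\rho}$ (defined exactly as in the discussion before Lemma~\ref{rootgroups}) is already a long root subgroup of $K$, yielding the long-root-element assertion. For (b), the restriction $d=1$ and $p\in\{2,3\}$ is immediate from the classification of graph-field automorphisms in \cite[Theorem 2.5.12]{GLS3}: such an $x$ only exists when the Dynkin diagram admits a symmetry of order $p$ (forcing $p\in\{2,3\}$, the exceptional value $3$ from $\mathrm D_{4}$) and it acts as the composite of a field and a graph automorphism on an untwisted group; the twisted fixed points are described in \cite[Theorem 4.9.1]{GLS3}.

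For (c) the key input is the Borel--de Siebenthal-type description of centralisers of semisimple elements in the adjoint algebraic group $\overline K$: the identity component of $C_{\overline K}(x)$ is a central product of simply connected simple algebraic factors with a central torus, and the component group embeds into the fundamental group, which at the finite level maps to the covering kernel $Z$. Taking $\sigma$-fixed points and applying $O^{r'}$ extracts the Lie-type central product $L$; the remaining $\sigma$-fixed points of the central torus supply the abelian $r'$-group $T$, and the component-group piece is the stated subgroup of $Z$. This is essentially the content of \cite[Theorem 4.2.2]{GLS3}. Parts (d) and (e) then follow by inspecting $C_{C}(L)$ in the two subcases distinguished in \cite[2.5.13, 4.1.8]{GLS3}: in the equal-rank case $L$ already absorbs a maximal torus and the additional centralising mass sits in the component group, giving $|C_{C}(L)/(C\cap Z(K))|\le p$; in the graph case the diagram symmetry prevents any non-central element from centralising every factor, so $C_{C}(L)\le Z(K)$.

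The main obstacle is not any single step but the bookkeeping across the four automorphism types, combined with the need to keep track of which statements live in $\overline K$, which in the universal group $K_{u}$, and which in $K$ itself; in particular, to state (c) correctly one must be careful that $T$ comes from the torus part of $C_{\overline K}(x)^{\circ}$ rather than from diagonal outer automorphisms, and that the cokernel really lands in $Z$ rather than only in a diagonal quotient of $K$. Once these identifications are made the lemma is a direct quotation from \cite[Chapters 2 and 4]{GLS3}.
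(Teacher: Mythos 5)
Your proposal is correct and follows essentially the same route as the paper, which proves the lemma simply by citing \cite[Theorem 4.9.1]{GLS3} for parts (a) and (b) and \cite[Theorem 4.2.2]{GLS3} for parts (c)--(e). The extra detail you supply (the algebraic-group viewpoint, and the observation that the long root subgroup of the subfield subgroup is a long root subgroup of $K$) is sound but not needed beyond the quotations from \cite{GLS3}.
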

\begin{proof}
For (a) and (b) see \cite[Theorem 4.9.1]{GLS3} and for the remaining parts see \cite[Theorem 4.2.2]{GLS3}.
\end{proof}

\section{The   commuting graph and the prime graph}\label{prime}

For the remainder of the paper, we suppose that all groups are finite groups.  Suppose that $G$ is a   group and let $\pi(G)$ be the prime graph of $G$ and $\Gamma=\Gamma(G)$ be the commuting graph of $G$. We start this section by presenting a series of  elementary lemmas detailing various properties of the commuting graph before moving on to the relationship between the commuting graph and the prime graph. Throughout this section $G$ is a   group with $Z(G)=1$.

\begin{lemma}\label{primeelements} Suppose  $x, y \in G^\#$ and that $x$ and $y$ are joined by a path of length $m$ in $\Gamma$. Then there exist    elements $x_2, x_3, \dots, x_{m-1}$ of prime order such that $$x= x_1 \sim x_2 \sim \dots \sim x_{m-1} \sim x_m=y$$ is a path in $\Gamma$. In particular, if $d$ is the maximum distance in $\Gamma$ between two elements of prime order, then $\Gamma$ has diameter at most $d+2$.
\end{lemma}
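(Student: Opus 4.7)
The plan is to take an existing path $x = x_1 \sim x_2 \sim \cdots \sim x_m = y$ of length $m$ and, for each intermediate vertex, replace it by a suitable power of itself that has prime order. Concretely, for $2 \le i \le m-1$, let $p_i$ be any prime dividing the order of $x_i$, set $k_i = |x_i|/p_i$, and define $x_i' = x_i^{k_i}$, which has order exactly $p_i$. Because $Z(G) = 1$, every nontrivial element of $G$ is a vertex of $\Gamma$, so $x_i'$ is a legitimate vertex. Moreover, any power of $x_i$ commutes with everything that $x_i$ does, so $x_{i-1}$ (or $x_1 = x$ if $i = 2$) commutes with $x_i'$, and $x_i'$ commutes with $x_{i+1}'$ (both are powers of commuting elements $x_i$ and $x_{i+1}$). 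Thus
\[
x = x_1 \sim x_2' \sim x_3' \sim \cdots \sim x_{m-1}' \sim x_m = y
\]
is a path of length $m$ in $\Gamma$ whose interior vertices all have prime order, which is exactly the first assertion.

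For the ``in particular'' part, suppose every pair of prime-order vertices of $\Gamma$ lies at distance at most $d$. Given arbitrary vertices $x, y$ of $\Gamma$, pick prime-order powers $x'$ of $x$ and $y'$ of $y$ as above. Since $x' \in \langle x\rangle$ and $x' \ne 1$, we have $\mathrm{d}(x, x') \le 1$, and likewise $\mathrm{d}(y, y') \le 1$. By hypothesis $\mathrm{d}(x', y') \le d$, so the triangle inequality gives $\mathrm{d}(x, y) \le d + 2$.

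The only thing one needs to be a little careful about is that the replacement element $x_i'$ really is a vertex, i.e.\ non-central; but this is free since $Z(G) = 1$ forces $G^\# = G \setminus Z(G)$. There is no substantial obstacle here: the whole content is the trivial observation that powers of $x_i$ inherit its commuting relations, combined with the fact that every nontrivial element has a power of prime order. I do not expect any case analysis or additional machinery to be required.
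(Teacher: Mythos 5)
Your proof is correct and is essentially the same as the paper's: replace each interior vertex by a prime-order power of itself, note that powers inherit commuting relations, and use $Z(G)=1$ to keep these elements as vertices (the paper's convention that $x \sim y$ also covers $x=y$ absorbs any coincidences among the replaced vertices). No further comment is needed.
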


\begin{proof} Let $x= x_1 \sim x_2^*\sim  \dots \sim x_{m-1} ^*\sim x_m=y$ be a path in $\Gamma$ connecting $x$ to $y$. For $2\le i \le m-1$, let  $x_i$ be an element of prime order in $\langle x_i^*\rangle$. Then, as $Z(G)=1$, $$x= x_1 \sim x_2\sim  \dots \sim x_{m-1} \sim x_m=y.$$
\end{proof}

We use Lemma~\ref{primeelements} without further reference. Thus often the paths we consider will be between elements of prime order and will consist of elements of prime order and always the ``inner elements" of a path will have prime order.

\begin{lemma}\label{normal} Suppose that $\Psi$ is a connected component of $\Gamma$. If $\Psi$ contains a conjugacy class of $G$, then $\Psi$ is a normal subset of $G$.
\end{lemma}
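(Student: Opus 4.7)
The plan is straightforward: exploit the fact that conjugation is an automorphism of the commuting graph and that a connected component is, by definition, a maximal connected subset.

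First, I would fix a conjugacy class $x^G \subseteq \Psi$ and pick an arbitrary $y \in \Psi$ together with $g \in G$; the goal is to verify $y^g \in \Psi$. Since $\Psi$ is a connected component of $\Gamma$ containing both $y$ and $x$ (after replacing $x$ by some conjugate in $x^G$, which still lies in $\Psi$), there is a path
\[
y = y_1 \sim y_2 \sim \cdots \sim y_m = x
\]
in $\Gamma$. Because $Z(G)=1$, each $y_i$ is non-central, and the commuting relation is preserved by conjugation: if $[y_i,y_{i+1}]=1$ then $[y_i^g,y_{i+1}^g]=1$. Moreover, $y_i^g$ is non-central since $Z(G)=1$ is normal. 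Hence
\[
y^g = y_1^g \sim y_2^g \sim \cdots \sim y_m^g = x^g
\]
is a path in $\Gamma$, and since $x^g \in x^G \subseteq \Psi$, the element $y^g$ lies in the connected component of $x^g$, namely $\Psi$.

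Thus $\Psi^g \subseteq \Psi$ for every $g \in G$, and applying the same argument to $g^{-1}$ gives the reverse inclusion, showing $\Psi$ is a normal subset of $G$. I do not expect any real obstacle here; the only thing to be careful about is verifying that conjugation sends a path in $\Gamma$ to another path in $\Gamma$, which reduces to the two observations that commuting is preserved under conjugation and that $G \setminus Z(G)$ is a normal subset (trivially, since $Z(G)=1$).
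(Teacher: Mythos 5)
Your argument is correct and is essentially the paper's own proof: both take an arbitrary element of $\Psi$, connect it by a path to an element of the conjugacy class contained in $\Psi$, and observe that conjugation by $g$ carries this path to a path ending at a class representative still inside $\Psi$. The extra details you verify (conjugation preserves commuting and non-centrality) are exactly what the paper leaves implicit.
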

\begin{proof}   Let $x \in \Psi$  and  $\mathcal C$  be a conjugacy class of $G$ in $\Psi$.  Then there exists a path in $\Psi$  joining $x$ to an  element $y \in \mathcal C$.  Hence, for all $g\in G $,  $x^g$  is connected to $ y^g \in \mathcal C \subseteq \Psi$. Hence $x^g \in \Psi$ and so $\Psi$ is a normal subset of $G$.
\end{proof}

 The next lemma will be used when we investigate the commuting graph of the almost simple groups of Lie type and  will be applied to the class of long root elements of Lie type groups  once we have shown that any two such elements have distance $2$ apart. This then explains one of the places where our general bound of 10 on the diameter of the commuting graph emerges.

\begin{lemma}\label{outside}
Suppose $K$ is a normal subgroup of $G$, $a \in G^\#$ with $a^G= a^K$. Then, for every $x \in G\setminus K$,  we have $\d(x,a^G) \le 4$.
\end{lemma}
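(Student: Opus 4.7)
My plan begins with the standard translation of the hypothesis: $a^G = a^K$ is equivalent to the factorisation $G = K \cdot C_G(a)$. This follows from comparing $|a^G| = [G : C_G(a)]$ with $|a^K| = [K : K \cap C_G(a)]$ together with the second isomorphism theorem $[C_G(a) : C_G(a) \cap K] = [K C_G(a) : K]$. Consequently, for any $x \in G \setminus K$, the induced surjection $C_G(a) \twoheadrightarrow G/K$ produces $c \in C_G(a)$ with $cK = xK$; since $x \notin K$ we must have $c \notin K$. Because $c$ commutes with $a$, the edge $c \sim a$ is always available (if $c = a$ we are already finished), and the problem reduces to exhibiting a path of length at most $3$ from $x$ to $c$.

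Writing $x = kc$ with $k := xc^{-1} \in K$, I would split into cases according to $[k, c]$. If $[k, c] = 1$, then $x$ and $c$ commute, producing the length-$2$ path $x \sim c \sim a$. Otherwise I would look for a non-trivial element $z \in C_G(x) \cap C_G(c) = C_G(\langle k, c \rangle)$; any such $z$ yields the length-$3$ path $x \sim z \sim c \sim a$. If even this common centraliser is trivial, one further intermediate vertex is required: for instance, the $K$-conjugate $c^k$ commutes with $a^k \in a^K = a^G$, and linking $x$ to $c^k$ through an element of $C_G(x) \cap C_G(c^k)$ together with the edge $c^k \sim a^k$ realises the length-$4$ bound.

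The main technical obstacle is the degenerate last case, in which the centralisers of the pairs shrink to the identity. To handle it, I would exploit the flexibility in the choice of $c$: it is determined only up to right-multiplication by an element of $C_K(a)$, which correspondingly modifies $k$, so the decomposition $x = kc$ may be adjusted to give $\langle k, c \rangle$ a non-trivial common centraliser in $G$. Failing that, one pivots via a well-chosen $K$-conjugate of $a$ inside $a^G$ --- using that every element of $a^K$ is an admissible target --- to guarantee the length-$4$ bound. This is the delicate step, but it is exactly what the bound of $4$ (rather than $2$ or $3$) leaves room for.
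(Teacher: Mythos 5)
Your opening reduction coincides with the paper's first line: $a^G=a^K$ together with $K\trianglelefteq G$ gives $G=KC_G(a)$, so some $c\in C_G(a)$ lies in the coset $xK$. From there, however, the argument has a genuine gap. Every branch of your case analysis ultimately requires a non-trivial element of $C_G(x)\cap C_G(c)$, or of $C_G(x)\cap C_G(c^k)$ after pivoting, and you give no reason why such an element should exist: for arbitrary $x$ and $c$ the subgroup $C_G(\langle x,c\rangle)$ can perfectly well be trivial, and neither adjusting $c$ by an element of $C_K(a)$ nor replacing $a$ by another $K$-conjugate comes with any mechanism that forces the relevant centraliser to become non-trivial. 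The ``delicate step'' you defer is precisely the content of the lemma, so the proposal does not close.

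The paper avoids the issue by working one prime at a time with Sylow theory instead of with a coset representative $c$. Choose a prime $p$ dividing the order of $xK$ in $G/K$; since $G=KC_G(a)$, $p$ divides $|C_G(a)|$, so a Sylow $p$-subgroup $P_0$ of $C_G(a)$ is non-trivial. Let $x^*$ be the non-trivial $p$-part of $x$, pick $P\in\mathrm{Syl}_p(G)$ with $P_0\le P$ and $g\in G$ with $x^*\in P^g$. Any $y\in Z(P^g)^\#$ then commutes both with $x^*$ and with $P_0^g\le C_G(a^g)$, so taking a non-trivial $z\in P_0^g$ gives the path $x\sim x^*\sim y\sim z\sim a^g$ of length $4$ to $a^g\in a^G$. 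The non-trivial common centralisers your plan hopes for are thus manufactured by the centre of a Sylow $p$-subgroup; this, together with the freedom to aim at an arbitrary conjugate $a^g$ rather than at $a$ itself, is the idea missing from your proposal.
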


\begin{proof} Since $a^G= a^K$ and $K$ is normal in $G$, $G= C_G(a)K$.  Let $x \in G \setminus K$ and assume that $p$ divides the order of $xK$ as an element of $G/K$.  Let $x^*$ be a generator of the Sylow $p$-subgroup of $\langle x \rangle$, $P_0 \in \Syl_p(C_G(a))$ and $P \in \syl_p(G)$ with $P_0 \le P$.  Since $p$ divides $|G/K|$, $P_0$ is non-trivial. Let $g \in G$ be such that $x^*\in P^g $ and  let $y\in Z(P^g)^\#$. Then $y$ commutes with $P_0^g \le C_G(a^g)$. Hence there exists $z \in P_0^g$ such that $$x\sim x^* \sim y \sim z \sim a^g$$ and this proves the claim.
\end{proof}

\begin{remark} There are elements of order $205= 5\cdot 41$ in $G=\mathrm {PGU}_5(4)$ which are not contained in $\PSU_5(4)$ and which have distance $4$ from root elements (and from involutions). This shows that Lemma~\ref{outside} cannot be sharpened.
\end{remark}

The following observation can in some instances be used to give better bounds on the diameter of $\Gamma$  compared to an approach using long root elements.  In particular, this is the case when we cannot show that root elements are close together. We also apply this lemma when considering the alternating groups and the sporadic simple groups.

\begin{lemma}\label{involutionsgood}
Suppose that $G$ has at least two conjugacy classes of involutions and let $\mathcal{I}$ be the set of involutions in $G$. Then $\mathcal{I}$ is a connected  subgraph of $\Gamma $ and has diameter at most 3. Moreover, there is a unique connected component containing all the elements of even order in $G$.
\end{lemma}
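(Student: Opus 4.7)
The plan is to analyze pairs of involutions via the dihedral subgroup they generate. Step one is to handle two non-conjugate involutions $t_1, t_2$: since $t_1, t_2$ are not $G$-conjugate (and hence not conjugate inside the dihedral group $D = \langle t_1, t_2\rangle$), the order $m = |t_1 t_2|$ must be even. Then $s = (t_1 t_2)^{m/2}$ is an involution lying in $Z(D)$, so $t_1 \sim s \sim t_2$ and $\mathrm{d}(t_1, t_2) \le 2$ with the path staying inside $\mathcal{I}$.

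For two involutions $t_1, t_2$ lying in the same class $C_1$, the plan is to relay through an involution of a different class. The pivotal auxiliary claim is that for every $t \in C_1$, the centralizer $C_G(t)$ contains an involution lying outside $C_1$. Granted this, given $t_1, t_2 \in C_1$ I pick $u \in C_G(t_1) \cap (\mathcal{I} \setminus C_1)$ so that $t_1 \sim u$; because $u$ and $t_2$ now lie in different classes, step one supplies an involution $s$ commuting with both, producing a path $t_1 \sim u \sim s \sim t_2$ of length at most $3$.

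The main obstacle is the auxiliary claim, but it succumbs to a second application of the dihedral trick. Suppose by contradiction that $C_G(t) \cap \mathcal{I} \subseteq C_1$ for some (and hence, by conjugation, every) $t \in C_1$, and pick any $u$ in a second involution class $C_2$. Either $t$ and $u$ already commute, in which case $u \in C_G(t) \setminus C_1$ gives an immediate contradiction, or step one yields an involution $s$ centralizing both $t$ and $u$; the hypothesis forces $s \in C_1$, but then $u \in C_G(s) \cap \mathcal{I}$ with $u \notin C_1$, contradicting the hypothesis applied to $s$.

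Finally, for the last assertion, any $x \in G$ of even order commutes with the involution in $\langle x \rangle$, so $x$ lies in the same connected component of $\Gamma$ as that involution; since the previous steps place the entirety of $\mathcal{I}$ in a single connected component of $\Gamma$, every element of even order lies in that unique component.
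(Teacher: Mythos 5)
Your proof is correct. The core device is the same as the paper's: two non-conjugate involutions generate a dihedral group whose central involution joins them, so they lie at distance at most $2$ inside $\mathcal I$, and the conjugate case is reduced to this one by producing a bridging involution from a different class adjacent to one of the two. Where you diverge is in how that bridge is found. The paper takes a Sylow $2$-subgroup $T_y$ containing $y$ (chosen with $y\in Z(T_y)$ if possible), notes that $T_y$ meets every class of involutions, and extracts an involution $z\in T_y$ not conjugate to $x$ that commutes with $y$ because one of $y,z$ is central in $T_y$; it then applies the dihedral trick to the non-conjugate pair $x,z$. You instead prove the auxiliary claim that every involution centralizes an involution of another class, by a contradiction argument that applies the dihedral trick a second time and uses that the hypothetical counterexample condition is conjugation-invariant. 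Your route avoids Sylow theory altogether and sidesteps the slightly delicate case analysis around $Z(T_y)$, at the cost of an extra (but short) reductio; the paper's route is more constructive in that it exhibits the bridging involution explicitly inside a concrete $2$-group. Both yield paths lying entirely in $\mathcal I$ of length at most $3$, and your treatment of the final assertion (every element of even order is adjacent to the involution in the cyclic group it generates) matches the paper's.
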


\begin{proof}  Suppose first that $x, y \in \mathcal I$. Let $T_y $ be a Sylow $2$-subgroup of $G$ containing $y$, if possible, chosen so that $y \in Z(T_y)$. Then, as $G$ has two conjugacy classes of involutions,  there exists  $z\in \mathcal I \cap T_y$ such that $z$ is not conjugate to $x$. If we can choose $z= y$, then we do so. If $z$ cannot be chosen to be $y$, then $x$ and $y$ must be conjugate. If $y\not\in Z(T_y)$, then $x$ is not conjugate to an element of $Z(T_y)$ by our choice of $T_y$. So we choose $z \in Z(T_y)^\#$. If  $y \in Z(T_y)$,  we just choose $z \in T_y$ not conjugate to $x$.  As $x$ and $z$ are not conjugate,    $\langle x,z \rangle$ is a dihedral group of order divisible by $4$ and so there exists an involution  $w \in Z( \langle z,x\rangle)^\#$. Therefore $$x \sim  w \sim z\sim y$$ is a path in $\Gamma $ consisting just of elements of $\mathcal I$. Hence any two members of $\mathcal I$   are at distance at most $3$ apart.
Since every element of even order is connected to an involution, all elements of even order  can be  connected in $\Gamma$ by a path of length at most $5$.
\end{proof}

 The problem with Lemma~\ref{involutionsgood} is that   when applied in conjunction with Lemma~\ref{outside} it gives a bound on the diameter of $\Gamma$ of $11$ whereas we would like to demonstrate the diameter is at most  10. The next lemma is the basis of our technical solution to this problem.

\begin{lemma}\label{outside2}
Suppose $p$ is a prime, $K$ is a normal subgroup of $G$, $a \in G^\#$ and $x \in G \setminus K$. Set $G_0= \langle x \rangle K$.  Assume that $f \in C_{G_0}(a) $ is a $p$-element, $x_p \in \langle x\rangle \cap K$ has order $p$ and $C_K(x_p)$ is cyclic.  Then $\d(x,a^G) \le 3$.
\end{lemma}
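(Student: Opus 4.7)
The plan is to exhibit a walk
\[
x \;\sim\; x_p \;\sim\; f^g \;\sim\; a^g
\]
of length at most three for a suitable $g \in G_0$. Since $x_p$ is a non-trivial power of $x$, the first edge is automatic, and the last edge comes for free from the relation $[f,a]=1$ upon conjugation by $g$. The whole task is to secure the middle edge $x_p \sim f^g$, and the hypothesis that $C_K(x_p)$ is cyclic is what makes this possible.

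The crucial claim I would prove first is: \emph{every Sylow $p$-subgroup $P$ of $G_0$ that contains $x_p$ has $x_p \in Z(P)$.} To prove it, set $S:=P\cap K$, a Sylow $p$-subgroup of $K$ that is normal in $P$ and contains $x_p$. The group $N_S(\langle x_p\rangle)/C_S(x_p)$ embeds into $\mathrm{Aut}(\langle x_p\rangle)\cong \mathbb Z/(p-1)$ while being a section of the $p$-group $S$, so it is trivial; hence $N_S(\langle x_p\rangle)=C_S(x_p)\leq C_K(x_p)$ is cyclic, with $\langle x_p\rangle$ as its unique subgroup of order $p$. Anything normalising $C_S(x_p)$ must then normalise $\langle x_p\rangle$, so
\[
N_S(C_S(x_p)) \leq N_S(\langle x_p\rangle) = C_S(x_p).
\]
Since proper subgroups of a finite $p$-group are properly contained in their normalisers, this forces $C_S(x_p)=S$, i.e.\ $x_p\in Z(S)$. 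To lift this to $P$, note that $Z(S)$ is characteristic in $S$, hence normalised by $P$, and is itself cyclic (being inside $C_K(x_p)$) with $\langle x_p\rangle$ again its unique subgroup of order $p$. Thus the conjugation action of $P$ stabilises $\langle x_p\rangle$, inducing a homomorphism $P\to \mathrm{Aut}(\langle x_p\rangle)$ whose image is at once a $p$-group and of order dividing $p-1$, hence trivial. So $x_p \in Z(P)$.

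The rest is immediate from Sylow's theorem. Choose $P\in\Syl_p(G_0)$ containing $x_p$. Since $f$ is a $p$-element of $G_0$, some Sylow $p$-subgroup of $G_0$ contains $f$, and Sylow conjugacy yields $g\in G_0$ with $f^g\in P$; conjugating $[f,a]=1$ by $g$ gives $f^g\in C_{G_0}(a^g)$, and by the claim $f^g$ commutes with $x_p$. Hence $x \sim x_p \sim f^g \sim a^g$ (with trivial shortening if any vertices coincide) is a walk of length at most three in $\Gamma$, proving $\d(x,a^G)\leq 3$. I expect the only real obstacle to be the centre-of-Sylow claim, which requires two coordinated uses of the cyclic-centraliser hypothesis: once to trigger normaliser-growth inside the Sylow subgroup of $K$, and once again to kill the $P$-action on $\langle x_p\rangle$ via coprimality of $p$ with $p-1$. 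Everything else is routine Sylow bookkeeping.
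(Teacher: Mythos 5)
Your proof is correct and follows essentially the same route as the paper: show that the hypothesis that $C_K(x_p)$ is cyclic forces $x_p$ to lie in the centre of a Sylow $p$-subgroup containing it, then use Sylow conjugacy to move $f$ into that Sylow subgroup and read off the path $x \sim x_p \sim f^g \sim a^g$. The only (immaterial) difference is that you work with a Sylow $p$-subgroup of $G_0$ while the paper takes one in $G$, and you spell out the normaliser-growth and $N/C$ steps in a little more detail.
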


\begin{proof} Let $P$ be a Sylow $p$-subgroup of $G$ which contains $x_p$.  Then $C_P(x_p) \cap K$ is cyclic by hypothesis.  Let $P_1 = N_{P\cap K}(C_P(x_p) \cap K)$.  Then, as $\langle x_p\rangle$ is the unique subgroup of order $p$ in $C_{P}(x_p)\cap K$, $P_1$ centralizes $x_p$. Thus $P_1= C_P(x_p) \cap K$ which means that $C_P(x_p) \cap K= P \cap K$. Since $P\cap K$ is normal in $P$, we  have $x_p \in Z(P)$. By Sylow's Theorem, there exists $g \in G$ such that $f^g \in P$. Hence $x\sim x_p \sim f^g \sim a^g$ and this proves the claim.
\end{proof}

We now describe the relationship between the commuting graph and the prime graph of $G$.
For a connected component $\Psi$ of $\Gamma$ we write $\pi(\Psi)$ for the set which consists of the primes which divide the order of some element of $ \Psi$.
  The next theorem was proved by Iranmanesh and  Jafarzadeh \cite[Lemma 4.1]{I} in the special case that $\Gamma$ is connected.

\begin{theorem}\label{primegraph}  Suppose that $G$ is a finite group with $Z(G)=1$. Let $\Gamma$ be the commuting graph of $G$ and $\pi$ be the prime graph of $G$.  Let $ \Gamma/G$ be a set of representatives of $G$-classes of connected  components of $\Gamma$. Then the map $$\Psi \mapsto \pi(\Psi)$$ is a bijection between $\Gamma/G$ and the connected components of $\pi$. Furthermore, $\Gamma$ is connected if and only if $\pi$ is connected.

\end{theorem}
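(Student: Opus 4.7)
The strategy is to construct an explicit inverse for the map $\Psi \mapsto \pi(\Psi)$. For each connected component $C$ of $\pi$ I will produce a $G$-class of connected components of $\Gamma$, then verify that the two compositions are identities.

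The pivotal preparatory lemma is: \emph{for every prime $r$ dividing $|G|$, any two elements of $G$ of order $r$ lie in $G$-conjugate connected components of $\Gamma$.} To prove it, take $x, y \in G$ of order $r$ and Sylow $r$-subgroups $P_x \ni x$ and $P_y \ni y$. Since all Sylow $r$-subgroups are $G$-conjugate, I may replace $x$ by a suitable $G$-conjugate so that $x, y \in P := P_y$. The group $P$ is a non-trivial $r$-group, so $Z(P) \ne 1$; and because $Z(G) = 1$, any $z \in Z(P)^\#$ is a non-central vertex of $\Gamma$ commuting with both $x$ and $y$. Thus $x \sim z \sim y$ (allowing the possibility that $z$ equals $x$ or $y$), so $x$ and $y$ lie in a common component. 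Combined with the observation that whenever $r \sim s$ in $\pi$ for distinct primes $r, s$ there is $g \in G$ of order $rs$ whose commuting powers $g^s$ (of order $r$) and $g^r$ (of order $s$) share a component of $\Gamma$, this yields: the $G$-class of the $\Gamma$-component containing an element of prime order $r \in C$ depends only on the $\pi$-component $C$. Denote this common $G$-class by $\Psi(C)$.

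It remains to verify $\pi(\Psi(C)) = C$. The inclusion $C \subseteq \pi(\Psi(C))$ is immediate from the construction of $\Psi(C)$. For the reverse inclusion, suppose $p \in \pi(\Psi(C))$; by replacing an element of $\Psi(C)$ of order divisible by $p$ with an appropriate power (still in $\Psi(C)$ because $Z(G) = 1$ makes such powers non-central vertices commuting with the original element), I may fix $x \in \Psi(C)$ of order exactly $p$. Now pick $y \in \Psi(C)$ of some prime order $r \in C$ and apply Lemma~\ref{primeelements} to produce a path
\[ y = y_0 \sim y_1 \sim \cdots \sim y_k = x \]
in which each $y_i$ has prime order $p_i$. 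If $p_i \ne p_{i+1}$ then the commuting product $y_i y_{i+1}$ has order $p_i p_{i+1}$, so $p_i \sim p_{i+1}$ in $\pi$; if $p_i = p_{i+1}$ there is nothing to show. Chaining these along the path places $p = p_k$ in the same component of $\pi$ as $r = p_0$, namely $C$.

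The maps $C \mapsto \Psi(C)$ and $\Psi \mapsto \pi(\Psi)$ are therefore mutually inverse bijections, and the final assertion ``$\Gamma$ is connected if and only if $\pi$ is connected'' is the case that either side has exactly one component. The main technical hurdle is the preparatory Sylow lemma, and it is there that the hypothesis $Z(G) = 1$ is essential: without it the natural candidate $z \in Z(P)^\#$ could be central in $G$ and so fail to be a vertex of $\Gamma$, breaking the whole argument.
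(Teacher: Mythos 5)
Your proposal is correct and rests on the same ingredients as the paper's own proof — Sylow conjugacy together with $Z(P)\neq 1$ and $Z(G)=1$ to place all elements of a given prime order into $G$-conjugate components, elements of order $rs$ to move along edges of the prime graph, and prime-order paths (Lemma~\ref{primeelements}) to show the orders occurring in a component of $\Gamma$ stay inside one component of $\pi$ — merely repackaged as the construction of an explicit inverse where the paper verifies well-definedness, surjectivity and injectivity of $\Psi\mapsto\pi(\Psi)$ directly. The only elision is the one-line remark that every connected component of $\Gamma$ contains an element of prime order (again because $Z(G)=1$) and hence its $G$-class is $\Psi(C)$ for some component $C$ of $\pi$; this is what makes your final ``mutually inverse'' conclusion legitimate, and it is immediate from the arguments you already give.
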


\begin{proof}
 We prove the theorem through a short series of three claims. We begin by showing that the map is well-defined.

\begin{claim}\label{primegraph1} Suppose $\Psi$ is a connected component of $\Gamma $. Then $\pi(\Psi)$  is a connected component of $\pi(G)$. Furthermore, if $\Psi$ and $\Psi^*$ are $G$-conjugate, then $\pi(\Psi)=\pi(\Psi^*)$.
\end{claim}

\medskip

  Set $\psi = \pi(\Psi)$. Suppose that $r$ and $s$  are in $\psi$  and let $x$ and $y$ be elements of order $r$ and $s$ respectively with $x$ and $y$ vertices of $\Psi$.  Then, as $\Psi$ is connected, there exists a path $$x= x_1 \sim x_2\sim \dots \sim x_m = y$$ connecting $x$ and $y$ in $\Psi$. We may suppose that,  for $1 \le i \le m$,  $x_i$ has prime order $r_i$. Then $x_ix_{i+1}$ has order $r_ir_{i+1}$. Hence, if $r_i \ne r_{i+1}$, then  $r_i$ is joined to $r_{i+1}$ in $\pi$. Therefore, ignoring loops,  there is a path $$r_1 \sim  r_2 \sim \dots \sim r_m$$ in $\pi$. Since each $r_i \in \psi$,   we get that $r$ and $s$ are connected in $\psi$. Therefore $\psi$ is connected. Let $\psi_0 \supseteq \psi$ be the connected component of $\pi(G)$ which contains $\psi$.
Assume that $r \sim s $ is  an edge in $\psi_0$   with $s  \in \psi$. Then there exists an element $vw $ of order $rs$ in $ G$ with $v$ of order $r$ and $w$ of order $s$.   Since $s\in \psi$, there exist an element $v$ of order $s$ in $\Psi$. Let $ S \in \Syl_{s}(G)$ with $v \in S$, then, as $Z(G)=1$,  $S^\# \subseteq \Psi$. Hence we may as well suppose that $w\in S$ and that $w\in \Psi$. Since $u$ and $w$ commute, we have $r\in \psi$.  Therefore $\psi = \psi_0$. \hfill $\blacksquare$

\medskip

Now we prove that the map is onto.

\medskip

\begin{claim}\label{primegraph2}
Suppose that  $\psi$ is a connected component of $\pi(G)$. Then there is a connected component $\Psi$ of $\Gamma(G)$ such that $\pi(\Psi)=\psi$.
\end{claim}

 \medskip
Let $p$ be a prime in $\psi$, let $x \in G$ be an element of order $p$ and let $\Psi$ be the connected component of $\Gamma(G)$ containing $x$. By Lemma~\ref{primegraph1}, $\pi(\Psi)$ is the connected component of $\pi(G)$ which contains $p$, thus $\pi(\Psi)=\psi$. \hfill $\blacksquare$

\medskip

Finally we show that the map is injective.

\medskip

\begin{claim}\label{primegraph3}
Let $\Psi $ and $\Theta$ be connected components of $\Gamma(G)$ such that $\pi(\Psi)= \pi(\Theta) $. Then $\Psi$ and $\Theta$ are $G$-conjugate.
\end{claim}

 \medskip
Let $p\in \pi(\Psi) =\pi(\Theta)$. Then there are Sylow $p$-subgroups $P$ and $Q$ of $G$ and an element $g\in G$ such that $P^g=Q$, $P^\# \subseteq \Psi$ and $Q^\# \subseteq \Theta$. Therefore $\Psi^g=\Theta$.
\hfill $\blacksquare$

The three claims together establish the theorem.
\end{proof}

\section{Isolated subgroups}\label{isolatedsec}

 With Theorem~\ref{primegraph} at our disposal we may use theorems that have already been provided by Williams \cite{Williams}. Assume that $G$ is a group and $\Gamma=\Gamma(G)$. We begin by recalling  the following definition from \cite{Williams}.

 A  subgroup $H$  of $G$ is called \emph{isolated} if  for all elements $g \in G\setminus N_G(H)$,  $H  \cap  H^g =1$ and for all $  h \in H^\#$,  $C_G(h) \le    H$.

\begin{lemma}\label{isolated}Suppose that $\Psi$ is a connected component of $\Gamma$. Then $\Psi = H^\#$ for some subgroup $H$ of $G$ if and only if $H$ is isolated.
\end{lemma}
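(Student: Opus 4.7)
The proof is a straightforward unpacking of the definitions, split into the two directions of the equivalence; the forward direction carries the real content.

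For the forward direction, assume $\Psi$ is a connected component of $\Gamma$ with $\Psi = H^\#$ for some subgroup $H \le G$, and verify the two defining properties of an isolated subgroup. To show $C_G(h) \le H$ for every $h \in H^\#$, let $c \in C_G(h)\setminus\{1\}$. Since $Z(G)=1$, $c$ is a vertex of $\Gamma$, and either $c=h$ (so $c \in H$) or $c \sim h$ in $\Gamma$, placing $c$ in the same connected component $\Psi = H^\#$, so that $c \in H$. To show $H \cap H^g = 1$ for every $g \in G\setminus N_G(H)$, suppose for contradiction that some non-identity $x$ lies in $H \cap H^g$. Then $x \in H^\# = \Psi$ and $x \in (H^\#)^g = \Psi^g$; since conjugation by $g$ permutes the connected components of $\Gamma$, $\Psi^g$ is again a component, and the shared element $x$ forces $\Psi = \Psi^g$. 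This yields $H^\# = (H^g)^\#$ and hence $H = H^g$, contradicting $g \notin N_G(H)$.

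The converse is formal: if $H$ is isolated then in particular $H$ is a subgroup, and the hypothesis $\Psi = H^\#$ is already the desired conclusion. One may additionally observe that the isolation condition $C_G(h) \le H$ forces every neighbour in $\Gamma$ of any $h \in H^\#$ to lie in $C_G(h)\setminus\{1\} \subseteq H^\#$, so $H^\#$ is in any case a union of connected components of $\Gamma$, consistent with the equality $\Psi = H^\#$. The only care required throughout is bookkeeping around the identity element when passing between $\Psi$, $H^\#$ and $H$, together with the observation that $G$ permutes the components of $\Gamma$ by conjugation; there is no substantive obstacle.
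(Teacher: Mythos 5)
Your proposal is correct and follows the same route as the paper, whose entire proof is the remark that the claim is straightforward from the definition: your forward direction (using $Z(G)=1$ to see that centralizing elements are vertices in the same component, and that conjugation permutes components) is exactly the intended unpacking, and your observation that $C_G(h)\le H$ for all $h\in H^\#$ makes $H^\#$ a union of components is the substantive point behind the converse as it is used later. No gap relative to the paper's own treatment.
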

  \begin{proof}  This is straightforward from the definition.
  \end{proof}

 We also recall that for a prime $p$ a proper subgroup $M$ of $G$ is \emph{strongly $p$-embedded} so long as  $p$ divides $|M|$ but  $p$ does not divide $|M \cap M^g|$ for all $ g\in G \setminus M$. There is a close relationship between groups with a strongly $p$-embedded subgroup and groups with an isolated subgroup.

\begin{lemma}\label{strongly} Assume that $Z(G)=1$ and suppose  $\Psi$ be a connected component of $\Gamma$ and let $M= \mathrm{Stab}_G(\Psi)$. Then either $M=G$ or, for all $p\in \pi(\Psi)$,  $M$ is strongly $p$-embedded.
\end{lemma}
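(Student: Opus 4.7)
The plan is to argue Sylow-theoretically, exploiting the fact that in a group with trivial centre any non-identity element of a Sylow $p$-subgroup lies in the same connected component as any other.

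First I would dispose of the hypothesis $M\neq G$ and prove the key local statement: if $P$ is a Sylow $p$-subgroup of $G$ with $P^\#\cap \Psi \neq \emptyset$, then $P^\#\subseteq \Psi$ and $P\subseteq M$. To see this, pick $y\in P^\#\cap \Psi$ and $z\in Z(P)^\#$; since $Z(G)=1$, we have $y\neq z$ or $y=z$, and in either case $z\in \Psi$ because $y\sim z$ (they lie together in the abelian subgroup $\langle y,z\rangle$ when $y\neq z$). Every element of $P^\#$ commutes with $z$, so $P^\#\subseteq \Psi$. For any $g\in P$, $(P^\#)^g=P^\#\subseteq\Psi$, so $\Psi^g$ is a connected component meeting $\Psi$, hence $\Psi^g=\Psi$ and $g\in M$.

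Now fix $p\in \pi(\Psi)$ and choose $x\in \Psi$ of order $p$. Any Sylow $p$-subgroup $P\in\Syl_p(G)$ containing $x$ satisfies $P^\#\subseteq \Psi$ by the previous paragraph, and $P\subseteq M$, so $p\mid|M|$ and $P\in\Syl_p(M)$ as well (because $|P|=|G|_p\geq |M|_p\geq|P|$). By Sylow's theorem in $M$, every $P_0\in\Syl_p(M)$ is $M$-conjugate to $P$, say $P_0=P^m$; then $P_0^\#=(P^\#)^m\subseteq \Psi^m=\Psi$. Thus every Sylow $p$-subgroup of $M$ lies (with non-identity elements) inside $\Psi$, and is in fact a Sylow $p$-subgroup of $G$.

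Now suppose, for a contradiction, that $g\in G\setminus M$ but $p\mid |M\cap M^g|$. Pick a non-trivial $p$-subgroup $Q\leq M\cap M^g$. Since $Q$ is a $p$-subgroup of $M$, it lies in some $P_1\in\Syl_p(M)$; since $Q\leq M^g$, $Qg^{-1}$ lies in some $P_2\in\Syl_p(M)$, i.e.\ $Q\leq P_2^g$. By the previous step $P_1^\#\subseteq \Psi$ and $P_2^\#\subseteq \Psi$, so $Q^\#\subseteq \Psi$ and $Q^\#=(Q^{g^{-1}})^g\subseteq \Psi^g$. Hence $\Psi\cap\Psi^g\supseteq Q^\#\neq\emptyset$, forcing $\Psi^g=\Psi$ and $g\in M$, a contradiction. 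Therefore $p\nmid |M\cap M^g|$ for every $g\in G\setminus M$, and $M$ is strongly $p$-embedded, as required.

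The only mild obstacle is the bookkeeping in the final paragraph, making sure the two $p$-subgroups $P_1$ and $P_2$ of $M$ (one containing $Q$, one containing $Q^{g^{-1}}$) are legitimately located inside $\Psi$; this is handled uniformly once the local ``$Z(P)$-trick'' at the start has been recorded.
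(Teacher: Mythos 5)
Your proof is correct and follows essentially the same route as the paper: the $Z(P)$ trick to show a Sylow $p$-subgroup meeting $\Psi$ lies entirely in $\Psi$ (hence in $M$), Sylow conjugacy inside $M$, and then the observation that a $p$-element of $M\cap M^g$ would force $\Psi\cap\Psi^g\neq\emptyset$ and hence $g\in M$. The only blemish is the typo ``$Qg^{-1}$'' for $Q^{g^{-1}}$, which your subsequent ``$Q\le P_2^g$'' makes harmless.
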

\begin{proof}
Let $M=\mathrm{Stab}_G(\Psi)$ and  assume that $M<G$. For $p\in \pi(\Psi)$, let  $x\in \Psi$ have order $p$  and $P$ be a Sylow $p$-subgroup of $M$ containing $x$. Then, as $Z(G)=1$,  $P^\# \subseteq \Psi$ and therefore $(P^h)^\# \subseteq \Psi$ for each $h\in M$. In particular, if $u \in M^\#$ has order divisible by $p$, then $u \in \Psi$.

Suppose that $g\in G \setminus M$ and assume that $p \mid |M^g \cap M|$. Let $ y \in (M \cap M^g)^\#$ be an element of order $p$, then the previous paragraph implies $y\in \Psi$. The same argument applied to $M^g = \mathrm{Stab}_G(\Psi^g)$ yields $y\in \Psi^g$. Hence $\Psi=\Psi^g$ which gives $g\in M$, a contradiction.
\end{proof}

 The next theorem, which depends on the classification of the finite simple groups,  shows that if $G$ has non-abelian Sylow $r$-subgroups for some odd prime $r$, then $r $ and $2$ are in the same connected component of $\pi(G)$.

\begin{theorem}[Chigira, Iiyori \&  Yamaki, 2000]\label{CIY} Let $G$ be a finite group of even order and let $S$ be a non-abelian Sylow $r$-subgroup for some odd prime $r$. Then there exists $r\in S^\#$ such that $|C_G(r)|$ is even.
\end{theorem}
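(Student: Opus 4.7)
The plan is to argue by contradiction using the prime graph and the classification of finite simple groups. Suppose that every $x\in S^\#$ has $|C_G(x)|$ odd. Then no element of $S^\#$ commutes with an involution, so an element of order $2r$ cannot exist in $G$. Hence the prime $r$ is \emph{not} adjacent to $2$ in $\pi(G)$, and $\pi(G)$ is disconnected with $r$ and $2$ in distinct connected components.

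First I would apply the Gruenberg--Kegel structure theorem (see \cite{GK}) for groups with disconnected prime graph. This places $G$ into one of three shapes: (a) $G$ is a Frobenius group; (b) $G$ is a $2$-Frobenius group; or (c) there is a normal series $1 \trianglelefteq N \trianglelefteq M \trianglelefteq G$ with $N$ a nilpotent $\pi_1$-group, $M/N$ a non-abelian simple group whose prime graph is disconnected, $G/M$ a solvable $\pi_1$-group of order dividing $|\mathrm{Out}(M/N)|$, and the components of $\pi(G)$ other than the one containing $2$ coincide with components of $\pi(M/N)$. Cases (a) and (b) are disposed of by Lemma~\ref{meta}: the Sylow subgroups of a Frobenius complement are cyclic or generalized quaternion, and a $2$-Frobenius group has cyclic Sylow subgroups for the odd primes in the non-$2$ component. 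So the Sylow $r$-subgroup $S$ would be cyclic and in particular abelian, contrary to hypothesis.

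The substantive case is (c), where $r$ lies in a component of $\pi(M/N)$ not containing $2$. Since $N$ is a $\pi_1$-group and $r\notin \pi_1$, a Sylow $r$-subgroup of $G$ maps injectively into a Sylow $r$-subgroup of $M/N \cdot (G/M)$; and as $(|G/M|,r)=1$ because $G/M$ is a $\pi_1$-group, one reduces to showing that a Sylow $r$-subgroup of the simple group $K=M/N$ (and its extension by outer automorphisms of order coprime to $r$) is abelian whenever $r$ lies in a non-$2$ component of $\pi(K)$. Here I would invoke Williams' classification \cite{Williams} (and the corrections by Iiyori--Yamaki and Kondrat\'ev) of the finite simple groups with disconnected prime graph, and then check case-by-case: for the alternating groups $\mathrm{Alt}(n)$, the primes $r$ in non-$2$ components satisfy $n/2 < r \le n$, and the Sylow $r$-subgroup is therefore cyclic of order $r$; for sporadic simple groups the claim is a direct inspection of the \textsc{Atlas}; and for Lie type groups $K$ in characteristic $\ell$, the primes in non-$2$ components of $\pi(K)$ are either $\ell$ itself (with $2$ in the same component unless $K$ is one of the Suzuki--Ree groups handled separately) or primes dividing a single cyclotomic factor $\Phi_d(q)$ of $|K|$ corresponding to a maximal torus $T$ with $C_K(T)=T$, forcing the Sylow $r$-subgroup to be contained in the abelian torus $T$.

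The main obstacle is the last case: verifying, uniformly across the Lie type groups on Williams' list, that primes in a non-$2$ component lie in a self-centralizing abelian torus (so that their Sylow subgroups are abelian) and that this abelianness is preserved under the cyclic outer automorphism group of $K$ of order coprime to $r$. This requires the detailed structure of maximal tori in finite groups of Lie type, including the Suzuki and Ree families where the relevant tori are again abelian and cyclic of order $q\pm\sqrt{2q}+1$ or $q\pm\sqrt{3q}+1$. Once this is established in every case, the Sylow $r$-subgroup of $G$ is abelian, giving the desired contradiction and completing the proof.
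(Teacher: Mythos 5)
There is a genuine gap at the very first step, and it undermines the whole strategy. Assuming that every $x\in S^\#$ has $|C_G(x)|$ odd only tells you that $G$ has no element of order $2r$, i.e.\ that $2$ and $r$ are \emph{non-adjacent} in $\pi(G)$. It does not follow that $2$ and $r$ lie in different \emph{connected components}: they may well be joined through a third prime. (For instance, in $G=C_3\times(C_5\rtimes C_4)$ every element of order $5$ has centraliser of odd order $15$, yet $\pi(G)$ is connected via $2\sim 3\sim 5$; of course the Sylow $5$-subgroup there is abelian, but the example shows that a hypothetical counterexample to the theorem need not have disconnected prime graph.) Consequently you are not entitled to invoke the Gruenberg--Kegel structure theorem \cite{GK} or Williams's classification \cite{Williams} of simple groups with disconnected prime graph, and the entire reduction in cases (a)--(c) collapses. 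This is exactly why the result is genuinely stronger than the disconnected-prime-graph theory: the paper does not prove it at all but quotes it from \cite{CIY}, whose proof runs a minimal-counterexample argument and a direct CFSG-based analysis of the simple groups and their automorphisms rather than passing through Gruenberg--Kegel.

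Two further points would need repair even if disconnectedness were available. In the Frobenius case you only rule out $r$ dividing the order of the complement; if $r$ divides the kernel, Lemma~\ref{meta} says nothing, and one must instead argue that a kernel admitting a fixed-point-free involution is abelian (and similarly sort out which factor of a $2$-Frobenius group carries the non-$2$ component). And in case (c) the assertion that for every Lie type group on Williams's list the primes in a non-$2$ component lie in a self-centralising abelian torus is a substantial case-by-case verification, not a formality; as sketched it is plausible but unproved. The short, honest route for the purposes of this paper is simply to cite \cite{CIY}, as the authors do.
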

\begin{proof}
This is the main theorem in \cite{CIY}.
\end{proof}

 The following result is a minor modification of  \cite[Theorem 3]{Williams} which we obtain by using Theorem~\ref{CIY}. However, we remark that the statement seems to be implicit in the calculations in Williams' article.

\begin{theorem}[Williams, 1980] \label{Williams}  Suppose that   $G$  is  non-soluble  group with $Z(G)=1$, $\Psi$ is a connected component of $\Gamma(G)$
   and    $\psi=\pi(\Psi)$. Assume that $\psi$ does not contain $2$.  Then $ G$ has  an abelian  Hall  $\psi$-subgroup  $H$ which
is  isolated  in  $G$ and $\Psi= H^\#$. In particular, $\Psi$ has diameter 1.
\end{theorem}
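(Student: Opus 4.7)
The plan is to derive this result by combining Williams' original \cite[Theorem 3]{Williams} with Theorem~\ref{CIY}. By Theorem~\ref{primegraph}, the hypothesis that $\Psi$ is a connected component of $\Gamma(G)$ with $\pi(\Psi)=\psi$ and $2\notin\psi$ is equivalent to the statement that $\psi$ is a connected component of the prime graph $\pi(G)$ consisting of odd primes. Thus $G$ is a non-soluble group with disconnected prime graph and $\psi$ is one of its odd components, which is precisely the setting analysed by Williams. The calculations in \cite{Williams} show that in this situation $G$ contains a nilpotent Hall $\psi$-subgroup $H$ satisfying $H\cap H^g=1$ for every $g\in G\setminus N_G(H)$ and $C_G(h)\le H$ for every $h\in H^\#$; that is, $H$ is isolated. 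By Lemma~\ref{isolated} together with Theorem~\ref{primegraph}, after replacing $H$ by a suitable $G$-conjugate we may identify $\Psi$ with $H^\#$.

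Next I would upgrade ``nilpotent'' to ``abelian'' using Theorem~\ref{CIY}. Since $2\notin\psi$, every prime dividing $|H|$ is odd. Suppose for contradiction that some Sylow $r$-subgroup $R$ of $H$ is non-abelian; because $H$ is Hall in $G$, $R$ is then a non-abelian Sylow $r$-subgroup of $G$ as well. Theorem~\ref{CIY} supplies an element $x\in R^\#$ with $|C_G(x)|$ even, so some involution $t\in G$ commutes with $x$. But $x\in R^\#\subseteq H^\#=\Psi$ and $t\sim x$ in $\Gamma$, so $t\in\Psi$ and consequently $2\in\pi(\Psi)=\psi$, a contradiction. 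Hence every Sylow subgroup of $H$ is abelian, and the nilpotence of $H$ forces $H$ itself to be abelian as the internal direct product of its abelian Sylow subgroups.

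The diameter assertion is then immediate: since $H$ is abelian, any two distinct elements of $\Psi=H^\#$ commute, so the subgraph of $\Gamma$ induced on $\Psi$ is complete and $\Psi$ has diameter at most $1$ (and exactly $1$ as $|H|\ge 3$).

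The main obstacle in this approach is the very first step: extracting from \cite[Theorem 3]{Williams} the precise conjunction of properties ``$H$ is a nilpotent, Hall, isolated subgroup,'' since this is only implicit in Williams' calculations rather than explicit in the statement of his theorem. The genuine work consists of re-auditing Williams' case analysis (Frobenius and $2$-Frobenius structure on the soluble side, and the Gruenberg--Kegel list of simple groups with disconnected prime graph on the non-soluble side) to verify that each odd component of $\pi(G)$ really does correspond to such an $H$. Once that is in place, Theorem~\ref{CIY} reduces the abelian upgrade to a single line.
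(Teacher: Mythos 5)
Your proposal is correct and follows essentially the same route as the paper: invoke Theorem~\ref{primegraph} to pass to the prime graph, extract the isolated nilpotent Hall $\psi$-subgroup from Williams' Theorem~3, identify $\Psi$ with a conjugate of $H^\#$ via Lemma~\ref{isolated} and the bijection of Theorem~\ref{primegraph}, and then upgrade nilpotent to abelian with Theorem~\ref{CIY} since no element of $\Psi$ commutes with an involution. Your extra remarks (the Hall-to-Sylow observation and the caveat that the isolated Hall subgroup is only implicit in Williams' calculations) match the paper's own comments preceding its proof.
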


\begin{proof}
Let $\psi = \pi(\Psi)$. Then $\psi$ is a connected component of $\pi(G)$ by Theorem~\ref{primegraph}.
By Williams \cite[Theorem 3]{Williams}, there exists  a  Hall $\psi$-subgroup $H$ of $G$ which is both isolated and nilpotent. Observe that $\Phi=H^\#$ is a component of $\Gamma$ and $\pi(\Phi)=\pi(H)=\psi=\pi(\Psi)$. Hence Theorem~\ref{primegraph} implies that $\Psi=\Phi^g$ for some $g\in G$, and so letting $K=H^g$ we see that $\Psi=K^\#$.
Now let $p\in \psi$ and pick $P$ a Sylow $p$-subgroup of $H$. Since no element of $P^\#$ can  be centralised by an involution, Theorem \ref{CIY} implies that $P$ is abelian. Thus $H$ is abelian, and so $\Psi$ has diameter 1.
\end{proof}

 Theorem~\ref{Williams} gives us a clear picture of disconnected commuting graphs of non-soluble groups. They consist of   perhaps more than one connected component containing involutions and then all the remaining connected components are cliques.  In simple groups we can say even  more as groups with strongly $p$-embedded subgroups have been investigated via the classification of simple groups when $p$ is odd and are known because of the fundamental work of Bender and Suzuki  \cite{Bender} when $p=2$.

\begin{lemma}\label{strongly2} Suppose that $G$ is a non-abelian simple group and $I<G$ is an isolated subgroup of $G$. Then either  \begin{enumerate} \item $I$ is cyclic of odd order;  or \item $I$ is a Sylow $r$-subgroup of $G$ and  $(G,r)$ is one of $(\PSL_2(r^a),r)$, $({}^2\mathrm B_2(2^a),2)$ with  $a$ odd or $(\PSL_3(4),3)$.\end{enumerate}
\end{lemma}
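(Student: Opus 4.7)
The plan is to combine Lemmas~\ref{isolated} and~\ref{strongly} with the classification of non-abelian simple groups that possess a strongly $p$-embedded subgroup.

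\textbf{Setup.} Since $G$ is non-abelian simple and $I<G$ is isolated with $I\ne 1$, the subgroup $I$ cannot be normal in $G$, so $N_G(I)<G$. Lemma~\ref{isolated} identifies $\Psi:=I^\#$ as a connected component of $\Gamma(G)$ with $\mathrm{Stab}_G(\Psi)=N_G(I)$, and Lemma~\ref{strongly} then yields that $N_G(I)$ is strongly $p$-embedded in $G$ for every $p\in\pi(I)$. The proof splits on the parity of $|I|$.

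\textbf{Case 1: $|I|$ odd.} Theorem~\ref{Williams} gives that $I$ is an abelian Hall $\pi(I)$-subgroup of $G$; in particular $I$ contains a full (abelian) Sylow $p$-subgroup of $G$ for each $p\in\pi(I)$. As $I$ is abelian, $C_G(I)=I$, and so $N_G(I)/I$ acts faithfully and fixed-point-freely on $I^\#$, making $N_G(I)$ a Frobenius group with kernel $I$. I then invoke the CFSG-based classification of non-abelian simple groups with a strongly $p$-embedded subgroup whose Sylow $p$-subgroup is abelian, for $p$ odd: the only such groups with a \emph{non-cyclic} Sylow $p$-subgroup are $\PSL_2(p^a)$ (Sylow $p$ elementary abelian of order $p^a$) and $\PSL_3(4)$ with $p=3$ (Sylow 3 elementary abelian of order 9). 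If every Sylow $p$-subgroup of $G$ for $p\in\pi(I)$ is cyclic, then $I$ is a direct product of cyclic groups of coprime orders, hence cyclic, giving case (i). Otherwise $G$ is one of the two named groups, and direct inspection inside $G$ shows that no Sylow $q$-subgroup with $q\ne p$ commutes with the non-cyclic Sylow $p$-subgroup (the other Sylows sit inside a Borel-like subgroup whose complement acts non-trivially on the Sylow $p$-subgroup), so the abelianness of $I$ forces $I$ to equal the non-cyclic Sylow $p$-subgroup, giving case (ii).

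\textbf{Case 2: $|I|$ even.} Now $N_G(I)$ is strongly 2-embedded, so the Bender--Suzuki theorem yields $G\cong\PSL_2(2^a)$, $\PSU_3(2^a)$, or ${}^2\B_2(2^a)$ with $a$ odd. For $G=\PSL_2(2^a)$ and $G={}^2\B_2(2^a)$, the Sylow 2-subgroup $P$ is TI and satisfies $C_G(t)\le P$ for every involution $t\in P$, so $P$ is isolated and $I=P$, placing us in case (ii). The group $G=\PSU_3(2^a)$ must be ruled out: the centraliser of an involution properly contains $P$, equalling $P\cdot T_0$ for a nontrivial toral factor $T_0$ of order $(q+1)/\gcd(3,q+1)$. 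Any isolated $I$ containing an involution must therefore contain $PT_0$; iterating the isolation condition on non-identity elements of $T_0$ forces $I$ to contain the full maximal torus $T$, and hence the whole Borel subgroup $B$. But a Weyl-element conjugate of $B$ meets $B$ in a non-trivial torus, contradicting isolation.

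\textbf{Main obstacle.} The decisive ingredient is CFSG, invoked via the Bender--Suzuki theorem in Case 2 and via the classification of simple groups with a strongly $p$-embedded subgroup (and abelian Sylow $p$-subgroup) for odd $p$ in Case 1. Granted these deep inputs, the remaining work is a case-by-case verification through the short list of groups produced, with the trickiest piece being the exclusion of $\PSU_3(2^a)$ in Case 2.
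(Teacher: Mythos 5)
Your Case 2 is essentially the paper's argument (Bender--Suzuki plus the observation that $\PSU_3(2^a)$ must be excluded), and your setup via Lemmas~\ref{isolated} and \ref{strongly} and Theorem~\ref{Williams} is also the intended route. The problem is in Case 1, where you replace the decisive case analysis by an appeal to a classification that is not true in the form you state. The classification of non-abelian simple groups with a strongly $p$-embedded subgroup and $p$-rank at least $2$ ($p$ odd), which is what \cite[Theorem 7.6.1]{GLS3} provides, does not shrink to $\{\PSL_2(p^a),\ \PSL_3(4)\}$ once you impose that the Sylow $p$-subgroup is abelian: besides the groups with non-abelian Sylow $p$-subgroups ($\PSU_3(p^a)$, ${}^2\mathrm G_2(3^a)$, $\McL$, $\J_4$), the list also contains $\mathrm{Alt}(2p)$, $(\Mat(11),3)$, $({}^2\mathrm F_4(2)',5)$ and $(\Fi_{22},5)$, and in all of these the Sylow $p$-subgroup is abelian (indeed elementary abelian of order $p^2$) and non-cyclic. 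So the strongly $p$-embedded condition alone does not eliminate them, and your proposed conclusion (i)/(ii) would be unproved for exactly these candidates.

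What is needed, and what the paper does, is to use the full strength of the isolation hypothesis rather than only strong embedding: in each of the extra groups there is an element of order $2p$ (order $2r$ in $\mathrm{Alt}(2r)$, order $6$ in $\Mat(11)$, order $10$ in ${}^2\mathrm F_4(2)'$ and $\Fi_{22}$, order $22$ in $\J_4$), so an element of order $p$ is centralised by an involution and hence the would-be isolated Sylow $p$-subgroup $I$ fails the condition $C_G(h)\le I$ for $h\in I^\#$. Without this (or an equivalent) elimination step your Case 1 has a genuine gap; the rest of your argument (cyclic Sylows force $I$ cyclic of odd order, $\PSL_2(r^a)$ and $\PSL_3(4)$ survive, and the even-order case via Bender--Suzuki with $\PSU_3(2^a)$ ruled out) is sound and matches the paper.
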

\begin{proof} Suppose that $I$ is an isolated subgroup with $I<G$ and assume that (i) does not hold.  Lemmas~\ref{isolated} and \ref{strongly} imply $N_G(I)$ is a strongly $r$-embedded subgroup of $G$  for each prime $r$ dividing $|I|$. If $|I|$ is even, then  we may suppose that $r=2$ and Bender's Theorem \cite{Bender} says that the pair   $(G,2)$ is one of   $(\PSL_2(2^a),2)$, $(\PSU_3(2^a),2)$,  $({}^2\B_2(2^a),2)$ and $I$ is a Sylow $2$-subgroup of $G$. Since the Sylow 2-subgroups of $\PSU_3(2^a)$, $a \ge 2$ are not isolated the statement in (ii) holds in this case. Hence we may assume that $I$ has odd order. By Theorem~\ref{Williams},  $I$ is abelian. In particular, if (i) is not true,   then $I$ is not cyclic and  there is a prime $r$ dividing $|I|$  such that $I$ has an elementary abelian subgroup of order $r^2$.  Therefore we may apply \cite[Theorem 7.6.1]{GLS3}  which yields the following possibilities for the pairs $(G,r)$: $(\PSL_2(r^a),r)$, $(\PSU_3(r^a),r)$,    $({}^2\mathrm G_2(3^a), 3)$, $(\mathrm{Alt}(2r),r)$, $(\PSL_3(4),3)$, $(\Mat(11),3)$, $({}^2\mathrm F_4(2)',5)$, $(\McL,5)$, $(\Fi_{22},5)$ or $(\J_4,11)$.  Furthermore, in each case $I$ is an abelian Sylow $r$-subgroup of $G$.   Thus $(\PSU_3(r^a),r)$  and $({}^2\mathrm G_2(3^a), 3)$ are not possible.   In $ \mathrm{Alt}(2r) $ elements of order $2r$, in $ \Mat(11)  $ there are elements of order 6, in  $ {}^2\mathrm F_4(2)$ there are elements of order 10, in $ \McL,5 $ there are elements of order 10, in $(\Fi_{22},5)$ there are elements of order 10 and  in $(\J_4,11)$ there are elements of order 22  and so in these cases $I$ is not isolated. This leaves the groups listed in the statement of (ii) and completes the lemma.
\end{proof}

\section{The reduction to almost simple}\label{almostsec}

For the remainder of this paper we assume that  $G$ is a   group with $Z(G)=1$ and we put $\Gamma=\Gamma(G)$.

\begin{lemma}\label{EGnot1}
Suppose that $E(G)\neq 1$ and $E(G)$ is not a quasisimple group. Then $\Gamma$ is connected and the diameter of $\Gamma$ is at most 7.
\end{lemma}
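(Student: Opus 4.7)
The plan is to exploit the fact that $E(G)$ has at least two components to prove (i) every prime-order $x \in G^\#$ satisfies $\d(x, E(G)^\#) \leq 1$, and (ii) $E(G)^\#$ has diameter at most $3$ as a subgraph of $\Gamma$. Together these give $\d(x, y) \leq 5$ for prime-order vertices $x, y$, and Lemma~\ref{primeelements} then raises this to a diameter of $7$ for $\Gamma$. Write $E(G) = L_1 L_2 \cdots L_k$ with $k \geq 2$ as a central product of components, and recall that $[L_i, L_j] = 1$ for $i \neq j$.

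For (i), I would consider the action of $\langle x \rangle$ of prime order $p$ on $\{L_1, \ldots, L_k\}$. If some orbit has length $p$, say $L_1, L_1^x, \ldots, L_1^{x^{p-1}}$, then choosing $y \in L_1 \setminus Z(L_1)$ and setting $D = \prod_{i=0}^{p-1} y^{x^i}$ produces an element of $E(G)^\#$ fixed by $x$: the product is well-defined and fixed because its factors lie in pairwise commuting distinct components, and it is non-trivial because its image in $E(G)/Z(E(G)) \cong \prod_i L_i/Z(L_i)$ has non-trivial coordinates in the first $p$ positions. If instead $x$ fixes every component setwise, then either $x$ centralises some $L_i$ (in which case $x$ commutes with all of $L_i^\#$), or $x$ induces a prime-order automorphism of $L_1$; in the latter case Thompson's theorem on fixed-point-free automorphisms of prime order, combined with the non-nilpotency of the quasisimple $L_1$, forces $C_{L_1}(x) \neq 1$.

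For (ii), I would use the decomposition $e = e_1 \cdots e_k$ with $e_i \in L_i$. Since $e \neq 1$ some $e_i \neq 1$, and then $e \sim e_i$ because the other factors commute with $e_i$; similarly $f \sim f_j \in L_j^\#$ for some $j$. When $i$ and $j$ can be chosen distinct, the path $e \sim e_i \sim f_j \sim f$ gives $\d(e, f) \leq 3$. The only obstruction is when both $e$ and $f$ have a single non-trivial factor and it lies in the same $L_{i_0}$; but then, since $k \geq 2$, both $e$ and $f$ centralise $L_m$ for any $m \neq i_0$, and $\d(e, f) \leq 2$ via any non-trivial element of $L_m$.

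Combining (i) and (ii) yields the diameter bound of $7$ and the connectedness of $\Gamma$ simultaneously. The subtlest step is (i) in the case where $x$ fixes each component setwise but acts faithfully on some $L_i$; this is precisely where Thompson's theorem is invoked, and without it one would have to appeal to the CFSG-based fact that every prime-order automorphism of a quasisimple group has a non-trivial centraliser.
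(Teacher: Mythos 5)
Your proposal is correct and takes essentially the same approach as the paper: show the commuting graph restricted to $E(G)$ has diameter at most $3$ via the central product decomposition, attach every prime-order element of $G$ to $E(G)^\#$ via Thompson's theorem on fixed-point-free automorphisms of prime order, and combine via Lemma~\ref{primeelements} to get the bound $7$. The only divergence is that the paper applies Thompson's theorem directly to the non-nilpotent group $E(G)$ (so $C_{E(G)}(x)\neq 1$ at once), which renders your orbit analysis and diagonal-element construction correct but unnecessary.
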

\begin{proof}
As $E(G)$ is a central product of at least two quasisimple groups,   $\Gamma^*=\Gamma(E(G))$ is connected and the diameter of $\Gamma^*$ is at most 3. By Thompson's Theorem \cite[Theorem 10.2.1]{Gor}, the elements of prime order in $G\setminus E(G)$ are connected to $\Gamma^*$. Hence every element of $G \setminus E(G)$ is connected to $\Gamma^*$ by a path of length at most $2$ and this gives the bound.
\end{proof}

\begin{lemma}\label{FarApartCloseToF}
Assume that $F(G) \neq 1$ and let $\mathcal Z = Z(F(G))^\#$. Suppose that $x,y\in\Gamma$ are such that $4< \d(x,y) < \infty$. Then $\d(x,\mathcal{Z})\leq 4$ and $\d(y,\mathcal{Z})\leq 4$.
\end{lemma}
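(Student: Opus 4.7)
I would prove $d(x, \mathcal{Z}) \le 4$; the bound on $d(y, \mathcal{Z})$ is symmetric. Fix a geodesic $x = x_0 \sim x_1 \sim \cdots \sim x_m = y$ of length $m \ge 5$, chosen using Lemma~\ref{primeelements} so that $x_1, \dots, x_{m-1}$ have prime orders $p_i := |x_i|$. The plan is to exhibit an explicit short path from $x$ into $\mathcal{Z}$ in each of several easily identified configurations, and then to show that the one remaining case produces a rigid Frobenius structure that collapses under Lemma~\ref{meta} and the geodesic condition $d(x_0, x_2) = 2$.

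I would first peel off three easy reductions. \textbf{(A)} If $p_i \in \pi(F(G))$ for some $i \in \{1,2,3\}$, then $O_{p_i}(G) \ne 1$ and any Sylow $p_i$-subgroup of $G$ containing $x_i$ acts on the non-trivial $p_i$-group $Z(O_{p_i}(G)) \subseteq Z(F(G))$, producing $z \in \mathcal{Z}$ commuting with $x_i$ and giving $\d(x, \mathcal{Z}) \le i+1 \le 4$. \textbf{(B)} If $C_{F(G)}(x_i) \ne 1$ for some $i \in \{0,1,2\}$, any $f$ there satisfies $x_i \sim f$ and $f \sim z$ for every $z \in \mathcal{Z}$ (since $\mathcal{Z} \subseteq Z(F(G))$), giving $\d(x, \mathcal{Z}) \le i+2 \le 4$. \textbf{(C)} For $i \in \{0, 1\}$, $\langle x_i, x_{i+1} \rangle$ is abelian; if some $g \in \langle x_i, x_{i+1} \rangle^\#$ has $C_{F(G)}(g) \ne 1$, then $g$ commutes with $x_i$ and with any $f \in C_{F(G)}(g)^\#$, producing a path $x_0 \sim \cdots \sim x_i \sim g \sim f \sim z$ of length $\le i+3 \le 4$. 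After these reductions one may assume $p_1, p_2, p_3$ are all coprime to $|F(G)|$, each of $x_0, x_1, x_2$ acts fixed-point-freely on $F(G)$, and both $\langle x_0, x_1 \rangle F(G)$ and $\langle x_1, x_2 \rangle F(G)$ are Frobenius groups whose abelian complements are cyclic by Lemma~\ref{meta}.

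The cyclicity of those complements together with the geodesic property $\d(x_0, x_2) = 2$ forces $p_1 \ne p_2$ (and, if $x_0$ has prime order, $p_0$ distinct from both): otherwise two of $x_0, x_1, x_2$ would lie in a common cyclic subgroup and commute, shortcutting the geodesic. I would then examine $H := \langle x_0, x_2 \rangle \le C_G(x_1)$, which is non-abelian by minimality and satisfies $H \cap F(G) \le C_{F(G)}(x_1) = 1$. The key dichotomy is whether $HF(G)$ is a Frobenius group. If it is not, some $h \in H^\#$ has $C_{F(G)}(h) \ne 1$; then $h \ne x_1$ (as $C_{F(G)}(x_1) = 1$), $h$ centralises $x_1$ (as $H \le C_G(x_1)$), and the length-$4$ path $x_0 \sim x_1 \sim h \sim f \sim z$ finishes. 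If it is, $H$ is itself a Frobenius complement, and Lemma~\ref{meta} supplies the contradiction: in the odd-order case all its prime-order elements commute, forcing $x_0 \sim x_2$; in the even-order case the unique involution of $H$ is central, which for prime-order $x_0$ and $x_2$ must coincide with one of them, again forcing $x_0 \sim x_2$.

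The main obstacle is this Frobenius case for $H$: once every consecutive pair $\langle x_i, x_{i+1} \rangle$ acts fixed-point-freely on $F(G)$, one must carefully combine the structural information of Lemma~\ref{meta} with the exact geodesic constraint $\d(x_0, x_2) = 2$ to rule out each possible complement structure. For $x_0$ of composite order the template is the same: every non-identity power of $x_0$ also acts fpf on $F(G)$ (else reduction~(B) applied to a power of $x_0$ finishes), so the cyclic $\langle x_0 \rangle$ has its prime-order subgroups totally ordered and one may substitute a prime-order generator of $\langle x_0 \rangle$ for $x_0$ in the prime-order-sensitive steps.
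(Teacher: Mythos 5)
Your reductions (A)--(C) are fine, and the ``non-Frobenius'' branch for $H=\langle x_0,x_2\rangle$ correctly produces a path of length $4$ into $\mathcal Z$. The genuine gap is in the final branch, where $HF(G)$ is a Frobenius group with complement $H$ of even order. You assert that the unique involution $t$ of $H$ ``must coincide with $x_0$ or $x_2$, again forcing $x_0\sim x_2$.'' That is false: $x_0$ and $x_2$ may both have odd prime order while $H$ has even order (for instance $H\cong \SL_2(3)$ or $\SL_2(5)$, both Frobenius complements, are generated by two non-commuting elements of order $3$ resp.\ $5$). In that situation $t\in Z(H)$ merely gives $x_0\sim t\sim x_2$, i.e.\ it \emph{realises} $\d(x_0,x_2)=2$ rather than contradicting it, and since $C_{F(G)}(t)=1$ (as $t\in H^\#$ and $HF(G)$ is Frobenius) the involution $t$ gives you no route into $\mathcal Z$ either. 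Note also that your argument only ever uses the vertices $x_0,x_1,x_2$ and the constraint $\d(x_0,x_2)=2$, so if it worked it would prove the conclusion already for $\d(x,y)\ge 3$; the hypothesis $\d(x,y)>4$ is never exploited, which is a warning sign. A secondary weakness: for composite $x_0$ your ``substitute a prime-order element of $\langle x_0\rangle$'' patch does not work as stated, since such a power may commute with $x_2$, and the odd-order contradiction (via Lemma~\ref{meta}) genuinely needs $x_0$ itself to have prime order.

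The paper closes exactly this case by working with the full centralisers rather than with $\langle x_0,x_2\rangle$, and by using the \emph{next} vertex $x_3$ (available because the path has length at least $5$, so $\d(x,x_3)=3$). Arguing for a contradiction from $\d(x,\mathcal Z)>4$, one gets $C_i\cap C_G(z)=1$ for $i=1,2$ and all $z\in Z(F(G))^\#$, where $C_i=C_G(x_i)$; hence $C_iZ(F(G))$ is Frobenius by Lemma~\ref{Frob2} and $C_i$ is a Frobenius complement. If $|C_2|$ is odd, Lemma~\ref{meta} gives $x_1\sim x_3$, contradicting minimality; if $|C_2|$ is even, its unique involution $t$ is central in $C_2$, so $t$ commutes with $x_1$ and $x_3$, then $t\in C_1$ forces $t\in Z(C_1)$, whence $x\sim t\sim x_3$ and $\d(x,x_3)\le 2<3$, a contradiction. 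Transporting the involution from $C_2$ to $C_1$ and playing it off against $\d(x,x_3)=3$ is precisely the step your proposal is missing; without it (or some replacement using more of the path), the even-order Frobenius-complement configuration cannot be ruled out.
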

\begin{proof} Assume   on the contrary that $\d(x,\mathcal{Z})>4$.
Let $Z=Z(F)$ and let $$\theta = x \sim x_1 \sim x_2 \sim x_3 \sim \dots \sim y$$ be a minimal path linking $x$ and $y$. For $i=1,2$ set $C_i=C_G(x_i)$ and note that, as $C_i \cap C_G(z)=1$ for all $z \in Z^\#$, $C_iZ$ is a Frobenius group by Lemma~\ref{Frob2}. If $|C_2|$ is odd, then we have $x_1 \sim x_3$ by Lemma~\ref{meta}, a contradiction as $\theta$ is a minimal path. Hence $|C_2|$ is even, and so, again by Lemma~\ref{meta}, there is a unique involution $t$ in $C_2$. Therefore  $t \in Z(C_2)$ and $[x_1, t]=1=[t, x_3]$. Since $t$ commutes with $x_1$  we get $t\in C_1$, whence $t \in Z(C_1)$  since $C_1$ is also a Frobenius complement. But now $x\sim t$ and so $x\sim t \sim x_3$ gives $3 = \d(x,x_3) \leq 2$, a contradiction. Hence $\d(x,\mathcal{Z})\le 4$ as claimed.
\end{proof}

\begin{lemma}\label{Fnot1}
Suppose  $F(G)\neq 1$ and let $\Psi$ be a connected component of $\Gamma$. Then the diameter of $\Psi$ is at most 9.
\end{lemma}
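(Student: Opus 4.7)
The plan is to combine Lemma~\ref{FarApartCloseToF} with the observation that $\mathcal{Z} = Z(F(G))^\#$ forms a clique in $\Gamma$. Since $F(G) \ne 1$, we have $Z(F(G)) \ne 1$, and because $Z(G) = 1$, every element of $\mathcal{Z}$ is a vertex of $\Gamma$. Moreover, $Z(F(G))$ is abelian, so any two elements of $\mathcal{Z}$ commute and are therefore adjacent (or equal) in $\Gamma$.

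Now fix $x, y \in \Psi$. If $\d(x,y) \leq 4$, we are immediately done. Otherwise $4 < \d(x,y) < \infty$, since $x$ and $y$ lie in the same connected component $\Psi$. Lemma~\ref{FarApartCloseToF} then yields $\d(x,\mathcal{Z}) \leq 4$ and $\d(y,\mathcal{Z}) \leq 4$. Pick $z_1, z_2 \in \mathcal{Z}$ realising these bounds. Since $z_1, z_2 \in Z(F(G))$ commute, $\d(z_1,z_2) \leq 1$, and concatenating paths gives
\[
\d(x,y) \;\leq\; \d(x,z_1) + \d(z_1,z_2) + \d(z_2,y) \;\leq\; 4 + 1 + 4 \;=\; 9,
\]
which is the desired bound.

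There is no real obstacle here; the substantive work was carried out in Lemma~\ref{FarApartCloseToF}, whose proof exploits the Frobenius action of centralisers on $Z(F(G))$ together with Lemma~\ref{meta} to force a common involution into the centralisers of inner vertices of any minimal long path. Given that lemma, the present statement reduces to the trivial remark that $\mathcal{Z}$ is a clique, acting as a ``hub'' through which any two far-apart vertices of $\Psi$ must route.
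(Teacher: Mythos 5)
Your proof is correct and follows essentially the same route as the paper: invoke Lemma~\ref{FarApartCloseToF} when $\d(x,y)>4$, note that $Z(F(G))^\#$ is a clique since $Z(G)=1$, and concatenate paths to get $4+1+4=9$. No issues.
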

\begin{proof}
Suppose that $x,y \in \Psi$. Then $\d(x,y) <\infty$ so the previous lemma implies $\d(x,y) \leq 4$ or there are $f_1, f_2 \in Z(F(G))^\#$ such that $\d(x,f_1)\leq 4$ and $\d(y,f_1)\leq 4$. Since $Z(G)=1$ we have $f_1\sim f_2$ which gives $\d(x,y)\leq 4+1+4=9$.
\end{proof}

\begin{theorem}\label{almost}
Let $G$ be  a finite group with $Z(G)=1$ and $F^*(G)$ not a non-abelian simple group.  Then every connected component of $\Gamma$ has diameter at most $9$.
\end{theorem}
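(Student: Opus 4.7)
The plan is a simple case split on whether $F(G)$ is trivial, with the substantive work already packaged in Lemmas~\ref{EGnot1} and \ref{Fnot1}. The theorem amounts to checking that the hypothesis ``$F^*(G)$ is not a non-abelian simple group'' funnels each possibility into the correct lemma.

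First I would observe that, since $Z(G) = 1$ and $C_G(F^*(G)) \le F^*(G)$ holds in every finite group, $F^*(G)$ cannot be trivial: otherwise $G = C_G(F^*(G)) = 1$, contradicting the existence of vertices of $\Gamma$. So $F^*(G) = F(G)E(G) \neq 1$, and at least one of $F(G)$, $E(G)$ is non-trivial.

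If $F(G) \neq 1$, I would apply Lemma~\ref{Fnot1} directly to obtain that every connected component of $\Gamma$ has diameter at most $9$, which is the bound claimed. If instead $F(G) = 1$, then $F^*(G) = E(G) \neq 1$, and the only thing I need to verify is that the hypothesis forces $E(G)$ to fail to be quasisimple, so that Lemma~\ref{EGnot1} becomes available. The key observation is that $Z(E(G))$ is a nilpotent characteristic subgroup of $G$, hence sits inside $F(G) = 1$; therefore $Z(E(G)) = 1$, and $E(G)$ is a direct product of its components, which are non-abelian simple. Since the hypothesis rules out $E(G) = F^*(G)$ being a single non-abelian simple group, there must be at least two components; in particular $E(G)$ is not quasisimple. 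Lemma~\ref{EGnot1} then gives diameter at most $7$, and a fortiori at most $9$.

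Taking the worse of the two bounds yields the stated diameter bound of $9$ in all cases. The only mildly delicate step is the verification in the second case that a non-trivial $E(G)$ together with $F(G) = 1$ and $F^*(G)$ not simple non-abelian implies $E(G)$ is not quasisimple; this is what the triviality of $Z(E(G))$ (forced by $F(G)=1$) supplies. There is no serious obstacle beyond this bookkeeping, since the real content lives in the two preparatory lemmas.
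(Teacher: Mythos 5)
Your proposal is correct and follows essentially the same route as the paper, whose proof is just the one-line observation that the hypothesis makes Lemma~\ref{EGnot1} or Lemma~\ref{Fnot1} applicable; your case split on $F(G)$ and the check that $F(G)=1$ forces $E(G)$ to be a product of at least two simple components (hence not quasisimple) is exactly the bookkeeping the paper leaves implicit.
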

\begin{proof}
The hypothesis implies one of Lemmas~\ref{EGnot1} and \ref{Fnot1} is applicable.
% Lemma \ref{Fnot1} implies that $F(G)=1$ and Lemma  \ref{EGnot1} shows that $F^*(G)=E(G)$ is a simple group.
\end{proof}

\section{Almost simple groups of sporadic type}\label{sporadic}
\label{sporsec}
Throughout this section $G$ is a group with $K=F^*(G)$ a sporadic simple group and we set $\Gamma=\Gamma(G)$.
Define $$\mathcal{D}_5=\{ \mathrm M_{11}, \mathrm M_{12},\mathrm{M}_{22}, \mathrm{Co}_2\},$$
$$\mathcal{D}_7=\{\mathrm{M}_{23},\mathrm M_{24}, \mathrm{Th}, \mathrm{H}\mathrm N, \mathrm{O}'\mathrm N, \mathrm{McL}, \mathrm{Ly}, \mathrm{J}_1,\mathrm{J}_{2},\mathrm{J}_4,\mathrm{HS}, \mathrm{Ru},\mathrm{Suz},\mathrm{Co}_1,\mathrm{Co}_3,\mathrm{Fi}_{22},\mathrm{Fi}_{23},\mathrm{Fi}_{24}',\mathrm B \},$$
$$\mathcal{D}_9=\{ \mathrm J_3, \mathrm M, \mathrm{He} \}.$$

Because of Theorem~\ref{Williams} we only need to focus our attention on the connected components of $\Gamma$ which contain involutions.

\begin{lemma}\label{G2}
There is a unique connected component of $\Gamma$ which has elements of even order.
\end{lemma}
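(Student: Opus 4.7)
The plan is to first reduce to showing that all involutions of $G$ lie in a single connected component of $\Gamma$. This reduction is immediate: any element $x\in G^\#$ of even order commutes with the unique involution $x^{|x|/2}\in\langle x\rangle$, hence is adjacent to it in $\Gamma$, and so lies in the component of some involution.

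Next I would split into two cases according to the number of $G$-conjugacy classes of involutions. If $G$ has at least two classes of involutions, then Lemma~\ref{involutionsgood} applies directly and shows that the set of all involutions is a connected subgraph of $\Gamma$ of diameter at most $3$, and we are done.

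Assume instead that $G$ has a unique $G$-class of involutions. Pick an involution $t\in G$, let $\Psi$ be the connected component of $\Gamma$ containing $t$, and set $M=\mathrm{Stab}_G(\Psi)$. By Lemma~\ref{strongly}, either $M=G$ or $M$ is strongly $2$-embedded in $G$. In the first case $\Psi$ is a normal subset of $G$, so $t^G\subseteq \Psi$; since all involutions of $G$ are $G$-conjugate to $t$, they all lie in $\Psi$ and we are done. The crux is therefore to exclude the possibility that $M$ is strongly $2$-embedded. For this I would appeal to Bender's classification \cite{Bender}: a finite group with $O_{2'}=1$ possessing a strongly $2$-embedded subgroup must have $F^*$ isomorphic to $\PSL_2(2^a)$, $\PSU_3(2^a)$, or ${}^2\B_2(2^a)$. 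Since $F^*(G)=K$ is a non-abelian simple group, $O_{2'}(G)=1$; since $K$ is sporadic, $F^*(G)$ is not on Bender's list, contradiction.

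The main (and essentially only) obstacle is a minor technicality, namely that Bender's theorem is conventionally stated for simple groups while our $G$ is merely almost simple. This is handled by the standard intersection argument: a strongly $2$-embedded subgroup $M\le G$ must contain a Sylow $2$-subgroup of $G$, so $M\cap K$ contains a Sylow $2$-subgroup of $K$; for $g\in K\setminus M$, $|(M\cap K)\cap (M\cap K)^g|$ divides the odd number $|M\cap M^g|$. Hence $M\cap K$ is a strongly $2$-embedded subgroup of $K$, returning us to the simple case where Bender's theorem (as invoked in the proof of Lemma~\ref{strongly2}) yields the required contradiction.
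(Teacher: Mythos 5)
Your proof is correct and follows essentially the same route as the paper: reduce to involutions, use Lemma~\ref{involutionsgood} when there are two classes, and otherwise use Lemma~\ref{strongly} to get a strongly $2$-embedded subgroup, which the Bender--Suzuki theorem rules out since $F^*(G)$ is sporadic. Your explicit reduction of the strongly $2$-embedded condition from $G$ to $K$ is a welcome detail that the paper leaves implicit, but it does not change the argument.
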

\begin{proof}
Suppose on the contrary that there are distinct connected components $\Psi$ and $\Phi$ both containing involutions and let $x\in \Psi$ and $y\in \Phi$ be such. If $x$ is not conjugate to $y$, then Lemma \ref{involutionsgood} implies that $x$ and $y$ are connected, a contradiction. Hence $x$ and $y$ are conjugate, and so there is $g\in G$ such that $\Psi^g=\Phi$. Thus $\mathrm{Stab}_G(\Psi)\neq G$. Lemma \ref{strongly} now yields $\mathrm{Stab}_G(\Psi)$ is strongly 2-embedded in $G$. The fundamental theorem on strongly 2-embedded subgroups due to Bender and Suzuki  \cite{Bender}  provides the final contradiction.
\end{proof}

\begin{lemma}\label{3sp}
Suppose that $x$ and $y$ are involutions in $G$. Then $\d(x,y) \leq 3$.
\end{lemma}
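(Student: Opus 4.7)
The plan is to split the argument based on the number of conjugacy classes of involutions in $G$. If $G$ has at least two conjugacy classes of involutions, then Lemma~\ref{involutionsgood} immediately gives $\d(x,y)\le 3$.

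So we may assume that $G$ has a unique conjugacy class of involutions. Since the outer automorphism group of each sporadic simple group has order at most $2$, and the non-trivial outer coset (when it exists) almost invariably introduces a new class of involutions, this case forces $G=K$ to be one of the sporadic simple groups possessing a single class of involutions. Inspection of the \textsc{Atlas} narrows the possibilities to (a subset of) $\mathrm M_{11}$, $\mathrm M_{22}$, $\mathrm M_{23}$, $\J_1$, $\J_3$, $\McL$, $\ON$, $\Ly$, and $\Th$.

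For each such $K$ I would proceed as follows. Write $y=x^g$ and consider the dihedral group $D=\langle x,y\rangle$ of order $2n$, where $n=|xy|$. If $n$ is even, then $D$ has a central involution $w$ and $x\sim w\sim y$, so $\d(x,y)\le 2$. If $n$ is odd, a path of length $3$ is required; it is enough to exhibit an involution $z\in C_G(x)\setminus\{x\}$ with $|zy|$ even, because then the central involution $w'$ of $\langle z,y\rangle$ yields the path $x\sim z\sim w'\sim y$.

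The hard part is the odd case: for each $K$ in the list above and each odd value of $n=|xy|$ that actually occurs between two conjugate involutions, one must verify the existence of a suitable $z\in C_G(x)$. I would do this by consulting the character table of $K$ (computing the relevant class multiplication constants) in conjunction with the \textsc{Atlas} description of the involution centraliser $C_G(x)$; in every one of the finitely many situations to be checked, $C_G(x)$ carries enough involutions that a $z$ with $|zy|$ even can be found. The bookkeeping is finite but case-by-case, and constitutes the essential technical content of the lemma.
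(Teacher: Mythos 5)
Your first case is exactly the paper's: with at least two classes of involutions, Lemma~\ref{involutionsgood} gives $\d(x,y)\le 3$. The reduction of the remaining case to the simple groups with a unique class of involutions is also in substance correct (for the relevant sporadic groups every extension $K.2$ splits, so $G>K$ would supply outer involutions and hence a second class), though your ``almost invariably'' should be replaced by this precise observation.

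The gap is in the single-class case. The paper disposes of it by quoting \cite[Theorem 1.1]{BBPR2}, the Bates--Bundy--Perkins--Rowley theorem on commuting involution graphs of sporadic simple groups, which already yields that for $\mathrm M_{11}$, $\J_1$, $\mathrm M_{22}$, $\mathrm M_{23}$, $\J_3$, $\McL$, $\ON$, $\Ly$ and $\Th$ two involutions are at distance at most $3$ in the commuting involution graph. Your dihedral-group skeleton (even $|xy|$ gives distance at most $2$; odd $|xy|$ handled by an involution $z\in C_G(x)$ with $|zy|$ even) is a sensible strategy, but the decisive step --- that such a $z$ exists in every configuration that actually occurs --- is only asserted, and it is precisely the nontrivial content: it amounts to reproving a published theorem whose proof occupies substantial case analysis. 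Moreover, the verification cannot simply be read off from class multiplication constants, since those count pairs ranging over the whole group, whereas you need an involution $z$ constrained to lie in $C_G(x)$ and simultaneously with $zy$ of even order; controlling both conditions at once requires the finer centralizer and fusion arguments of the kind carried out in \cite{BBPR2}. As written, the hard half of the lemma is therefore left unproved; citing \cite[Theorem 1.1]{BBPR2} (or genuinely performing the finitely many checks, e.g.\ in {\sc Magma}) is needed to close it.
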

\begin{proof}
If there are at least two conjugacy classes of involutions in $G$ this follows from Lemma~\ref{involutionsgood}. Otherwise, $G$ is one of $\mathrm M_{11}$, $\mathrm J_1$, $\mathrm M_{22}$, $\mathrm{M}_{23}$, $\mathrm{J}_3$, $\mathrm{McL}$, $\mathrm O$'$\mathrm N$, $\mathrm{Ly}$, $\mathrm{Th}$ by \cite[Table 5.3]{GLS3}. Since the commuting graph on a   conjugacy class of involutions in $G$ is a subgraph of $\Gamma$, we can apply \cite[Theorem 1.1]{BBPR2}   to obtain the required conclusion.
\end{proof}

\begin{theorem}
\label{Spor}
Suppose that $G$ is a group with $F^*(G)$ a sporadic simple group. Then the diameter of a connected component of  $\Gamma$ is at most $7$.
\end{theorem}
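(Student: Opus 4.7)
The plan is to reduce the problem to the unique connected component of $\Gamma$ containing the involutions, and then to show that every prime-order vertex of this component lies within commuting-graph distance $2$ of some involution; the bound of $7$ will then follow from the triangle inequality and Lemma~\ref{3sp}.

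By Theorem~\ref{Williams}, every connected component of $\Gamma$ free of involutions is of the form $H^{\#}$ for an abelian Hall subgroup $H$ and hence has diameter $1$. By Lemma~\ref{G2}, there is a unique connected component $\Psi$ of $\Gamma$ containing involutions, so it suffices to bound $\mathrm{diam}(\Psi)$. By Lemma~\ref{primeelements} I may restrict attention to prime-order vertices of $\Psi$; by Lemma~\ref{3sp}, any two involutions of $G$ are at distance at most $3$, so once we establish
\begin{quote}
$(\ast)$\quad every prime-order vertex of $\Psi$ lies within commuting-graph distance $2$ of some involution,
\end{quote}
the triangle inequality will supply $\mathrm{diam}(\Psi) \le 2+3+2 = 7$, as required.

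Claim $(\ast)$ is trivial for involutions. For an element $x \in G$ of odd prime order $p$, set $K = F^{*}(G)$. Since $|G/K| \le 2$ for every sporadic simple $K$, and since $|xK|$ must also divide $p$, we have $xK = K$, i.e., $x \in K$. Hence $(\ast)$ reduces to a statement about $K$, which I would verify one sporadic group at a time by direct inspection of the character tables and centralizer structures provided by the {\sc Atlas}. For each odd prime $p$ lying in the prime-graph component of $K$ containing $2$, one locates in $C_K(x)$ either an involution (distance $1$), or an element of $C_K(x)$ which itself commutes with an involution of $K$ (distance $2$), the latter typically being witnessed by the existence of an element of composite even order in an appropriate centralizer. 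The sets $\mathcal{D}_5,\mathcal{D}_7,\mathcal{D}_9$ serve to organise the $26$ sporadic groups according to the structure of their prime-graph component of $2$. For an outer element $x \in G \setminus K$, the order of $x$ must be even (as $|G/K| \le 2$), so the appropriate power of $x$ is an involution commuting with $x$, giving distance $1$.

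The main technical obstacle is the case-by-case verification of $(\ast)$ in the largest sporadic groups --- the Monster, the Baby Monster, and the Fischer groups --- where the prime-graph component of $2$ contains several odd primes and one must examine each conjugacy class of odd prime order individually, exhibiting a suitable element of even order in its centralizer rather than invoking a uniform structural argument. For smaller groups the verification is immediate: abelianness of the relevant Sylow subgroups usually turns the prime-graph link into a commuting-graph link of length $1$.
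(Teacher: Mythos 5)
Your overall strategy (reduce to the unique even-order component via Lemma~\ref{G2} and Theorem~\ref{Williams}, use Lemma~\ref{3sp} for involutions, and control distances to $\mathcal I$ by Atlas data) is the same as the paper's, but the quantitative reduction has a genuine gap. You establish $(\ast)$ only for \emph{prime-order} vertices and then invoke Lemma~\ref{primeelements}; but restricting to prime-order vertices via that lemma costs two extra steps, so from ``prime-order vertices are within distance $2$ of $\mathcal I$'' you only get $\d(x,y)\le 1+(2+3+2)+1=9$ for arbitrary vertices $x,y$ of the component, not $7$. To reach $7$ you must prove $\d(x,\mathcal I)\le 2$ for \emph{every} vertex $x$ of the even-order component, including elements of composite odd order. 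For such an $x$, knowing that a prime-order power $x'$ of $x$ satisfies $\d(x',\mathcal I)\le 2$ does not suffice: the middle vertex of a path $x'\sim z\sim t$ need not commute with $x$, so it only yields $\d(x,\mathcal I)\le 3$. What one actually needs is an element of $C_G(x)$ whose centralizer contains an involution, e.g.\ a prime-order power of $x$ with centralizer of even order.

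This is not a hypothetical worry: in the groups the paper places in $\mathcal D_9$ there are prime-order classes with centralizers of odd order ($3B$ in $\mathrm J_3$, $7C$ in $\mathrm{He}$, $29A$ in $\mathrm M$), and hence there can be composite odd-order elements all of whose prime-order powers lie in such a class. Concretely, an element of class $9A$ in $\mathrm J_3$ cubes into $3B$, so the route through its prime-order power fails; the paper handles it by observing that a $9A$-element also commutes with a $3$-central $3A$-element, which commutes with an involution, giving distance $2$ directly. These composite-order elements are exactly what your restriction to prime-order vertices discards, and your case-by-case verification scheme (inspecting $C_K(x)$ only for $x$ of odd prime order) never examines them. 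By contrast, the paper argues for arbitrary $x\in\Gamma_2$: even order gives $\d(x,\mathcal I)\le 1$; odd order gives $x\sim x'\sim t$ with $x'$ a prime-order power whose centralizer is checked to be of even order for the $\mathcal D_5$ and $\mathcal D_7$ groups, and the three exceptional classes together with $9A$ of $\mathrm J_3$ are treated separately. As written, your argument proves a bound of $9$ (which would still suffice for Theorem~\ref{MainTheorem}, but not for the stated bound of $7$); to repair it, replace $(\ast)$ by the statement $\d(x,\mathcal I)\le 2$ for all $x$ in the even-order component and add the analysis of odd composite-order elements whose prime-order powers have odd centralizers.
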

\begin{proof} By Theorem~\ref{Williams} and Lemma~\ref{G2} we only need to bound the diameter of $\Gamma_2$; the connected component of $\Gamma$ which contains an involution. Set $K= F^*(G)$ and let $\mathcal I$ be the set of involutions in $G$.

Suppose that $K\in \mathcal D_d$ with $d=5,7$ and let $x\in\Gamma_2$ be arbitrary. It suffices to show that $\d(x,\mathcal{I})\leq \frac{d-3}{2}$ (note that our assumption forbids $\langle x\rangle$ being self-centralising of prime order). If $x$ has even order then $\d(x,\mathcal{I})\leq 1$, so we can assume that $x$ has odd order. In particular, since $|\mathrm{Aut}(G):K|\leq 2$, we may assume $x\in K$.

   We   now make extensive use of the character tables available in \cite{ATLAS}. If $d=5$ we just check that $|C_{K}(x)|$ is even and if $d=7$ we have to check that $|C_{K}(x')|$ is even for each $x'\in K$ of prime order.

Suppose now that $d=9$ and $K\in \mathcal D_d$. We find then that $|C_G(x')|$ is even for each $x'\in K$ of prime order, unless $K\cong \mathrm J_3$ and $x'$  is in class $3B$, $K \cong \mathrm{He}$ and $x'$ is in the $7C$ class or $K\cong \mathrm{M}$ and $x'$ is in the $29A$ class (here we have used Atlas \cite{ATLAS} notation for conjugacy classes).
Notice that because in each of these cases there is a unique conjugacy class of elements with odd centralizer, the only candidates for $x$ which have distance $3$ from $\mathcal I$ must have  order a proper power of the order of $x'$. It follows at once from the character tables of these groups that $K= \mathrm J_3$ and $x$ is in class $9A$. However if $x$ is such an element, it also commutes with a $3$-central element in class $3A$ and this element commutes with an involution. Hence $\d(x,\mathcal I) \le 2$ for all $x \in \Gamma_2$ in this case.  Thus $\Gamma_2$ has diameter at most $7$ for $F^*(G)$ a sporadic simple group.
\end{proof}

\section{Almost simple groups of Alternating type}\label{alternating}

Here we assume that $G$ is a group with $Z(G)=1$ and $F^*(G)$ is isomorphic to $\mathrm{Alt}(n)$ for some $n\geq 5$. In \cite{I} it is shown that the diameter of $\Gamma$ is at most $5$ whenever $\Gamma=\Gamma(G)$ is connected. Here we settle for a short proof which provides a worse bound, but  also applies  to the case where $\Gamma$ is disconnected.

%\begin{theorem}\label{Alt} Suppose that $n\geq 8$. Then the diameter of a connected component of $\Gamma$ is at most $9$.
%\end{theorem}
%\begin{proof}  Since $n\geq 8$, $G$ has two conjugacy classes of involutions. Thus Lemma \ref{involutionsgood} shows  any two involutions have distance at most 3 from each other. Therefore we simply have to show that every element of odd prime order which is connected to an involution is connected by a path of length at most $2$. Let $\Gamma_2$ be the connected component of $\Gamma$ which contains the set of involutions $\mathcal I$ of $G$.

%Let $x \in \Gamma_2$ have odd prime order $p$, so that $x$ has cycle type $1^{n-pa}p^a$ for some integer $a$ with $ pa \leq {n}$. If $a>1$, then such an element commutes with a $p$-cycle, which itself commutes with an involution (since $n\geq 8$), so we may assume $a=1$.  If $n-p \ge 4$, then $x$ is incident to an element conjugate to $(1,2)(3,4)$. If $n-p= 3$, then  $x$ is incident to a conjugate of $(1,2,3)$ which is then incident to an involution (as  $n \geq 8$).   If $n-p< 3$, then  $\langle x \rangle$ is a Sylow $p$-subgroup of $G$ and is isolated in $G$ if  $G $ is an alternating group, which contradicts our assumption that $x\in \Gamma_2$. If $G $ is a symmetric group, then $n-p=2$ means that $x$ commutes with a conjugate of $(1,2)$ and, if $n-p < 2$, then again $\langle x \rangle$ is isolated in $G$, a contradiction. Thus $\d(x,\mathcal{I})\leq 2$ for every element $x$ of prime order, and this gives the required bound for arbitrary elements of $\Gamma_2$.
%\end{proof}
\begin{theorem}
\label{Alt}
Suppose that $n\geq 8$. Then the diameter of a connected component of $\Gamma(G)$ is at most 7 if $n=8$ or $n\geq 12$ and the diameter of a connected component of $\Gamma(G)$ is at most 8 if $9\leq n \leq 11$.
\end{theorem}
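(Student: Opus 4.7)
The plan is to bound the diameter of the unique connected component $\Psi$ of $\Gamma(G)$ containing involutions, and to show every other component has diameter $1$. Since $n \ge 8$, the group $G \in \{\mathrm{Alt}(n), \mathrm{Sym}(n)\}$ has at least two conjugacy classes of involutions (for instance the $\mathrm{Alt}(n)$-classes of cycle types $(2,2,1^{n-4})$ and $(2,2,2,2,1^{n-8})$), so Lemma~\ref{involutionsgood} produces a unique component $\Psi \subseteq \Gamma(G)$ containing every element of even order, with any two involutions lying at distance at most $3$. By Theorem~\ref{Williams} every other component of $\Gamma(G)$ has diameter $1$, so the task reduces to bounding the diameter of $\Psi$.

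The main technical claim I would establish is that for every $x \in \Psi$ we have $\d(x, \mathcal{I}) \le 2$, with a single exception: elements of cycle type $(9, 1^{n-9})$ in $\mathrm{Alt}(n)$ for $n \in \{9, 10\}$, where $\d(x, \mathcal{I}) = 3$. The proof is by case analysis on the cycle structure of prime-power parts of $x$. For $x$ of even order, a power of $x$ is an involution. For $x$ of odd prime order $p$ with $k$ disjoint $p$-cycles fixing $f = n - kp$ points, an even involution in $C_G(x) \cap \mathrm{Alt}(n)$ is produced by one of three constructions: (i) a double transposition on fixed points, when $f \ge 4$; (ii) two disjoint swaps of cycle pairs of $x$, a product of $2p$ transpositions, when $k \ge 4$; (iii) a single swap of two $p$-cycles combined with a transposition of two fixed points, a product of $p+1$ transpositions (even since $p$ is odd), when $k \ge 2$ and $f \ge 2$. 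The tight tuples $(k, f)$ not covered by (i)--(iii) either force $p$ to be isolated in the prime graph of $G$, so $x \notin \Psi$, or allow passing $x \sim c$ to a constituent $p$-cycle $c$ of $x$, which has $n - p \ge 4$ fixed points and so commutes with a double transposition; in either situation $\d(x, \mathcal{I}) \le 2$. For $x$ of odd composite order, taking the prime-order power $x^{|x|/q}$ for a suitable prime $q \mid |x|$ (maximising the number of fixed points) reduces to the prime-order analysis, with the same conclusion $\d(x, \mathcal{I}) \le 2$, except in the exceptional case: if $x$ is of cycle type $(9, 1^{n-9})$ in $\mathrm{Alt}(n)$ with $n \in \{9, 10\}$, then $x^3$ has cycle type $(3^3, 1^{n-9})$ with at most one fixed point, none of (i)--(iii) applies to $x^3$, yet $x^3$ commutes with each of its $3$-cycle constituents, each having $n - 3 \ge 6$ fixed points and therefore commuting with a double transposition. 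Hence $\d(x^3, \mathcal{I}) = 2$ and $\d(x, \mathcal{I}) = 3$ for these $x$. For $n = 11$ the element of cycle type $(9, 1, 1)$ has $x^3$ of type $(3^3, 1^2)$, construction (iii) applies to $x^3$, and $\d(x, \mathcal{I}) \le 2$.

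Combining with Lemma~\ref{involutionsgood} yields the theorem. When every $x \in \Psi$ satisfies $\d(x, \mathcal{I}) \le 2$, covering $n = 8$ and $n \ge 12$, the bound
\[
\d(x, y) \le \d(x, \mathcal{I}) + 3 + \d(y, \mathcal{I}) \le 7
\]
holds. For $9 \le n \le 11$, if neither of $x, y$ is exceptional the same bound of $7$ applies; if $x$ is exceptional and $y$ is not, then $x \sim x^3$ with $\d(x^3, \mathcal{I}) \le 2$ gives $\d(x, y) \le 1 + 2 + 3 + 2 = 8$; and if both $x, y$ are exceptional, bridging through $3$-cycle constituents yields $\d(x, y) \le 6 \le 8$, using the fact that any two $3$-cycles in $\mathrm{Alt}(n)$ with $n \ge 9$ satisfy $\d \le 2$ in $\Gamma$ (they either commute when disjoint, or share $\ge 4$ common fixed points supporting a common double transposition). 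The main obstacle is the parity bookkeeping required to construct even involutions in centralizers of odd-order elements, together with verifying that the exceptional family consists precisely of the $9$-cycles in $\mathrm{Alt}(n)$ for $n \in \{9, 10\}$.
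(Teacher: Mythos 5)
Your overall strategy coincides with the paper's: use Lemma~\ref{involutionsgood} and Theorem~\ref{Williams} to reduce to the component $\Psi$ containing the involutions, show every odd-order element of $\Psi$ is within distance $2$ of $\mathcal I$ apart from an exceptional family of order-$9$ elements which are within distance $3$, and then assemble the bounds $7$, $8$ and $6$ exactly as the paper does (your identification of the exceptions as the $9$-cycles in $\mathrm{Alt}(9)$ and $\mathrm{Alt}(10)$ is in fact finer than the paper's, which only isolates ``order $9$, $9\le n\le 11$''). However, two steps need repair. First, the dichotomy you state for the tuples $(k,f)$ not covered by (i)--(iii) --- ``either $p$ is isolated in the prime graph, or a constituent $p$-cycle has at least $4$ fixed points'' --- is not exhaustive: for a single $p$-cycle with two or three fixed points neither alternative need hold. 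For example, a $5$-cycle $x$ in $\mathrm{Alt}(8)$ has $(k,f)=(1,3)$, its only constituent cycle is $x$ itself with just $3$ fixed points, and $5$ is not isolated (there are elements of order $15$), so $x\in\Psi$; similarly for a $7$-cycle in $\mathrm{Alt}(10)$, or a single $p$-cycle with $2$ or $3$ fixed points in $\mathrm{Sym}(n)$. The conclusion $\d(x,\mathcal I)\le 2$ is still true in these cases --- via a $3$-cycle on the fixed points when $f=3$ in the alternating case, and via a transposition on the fixed points in the symmetric case --- which is precisely how the paper disposes of them (``$x$ fixes at most two points if $G=\mathrm{Alt}(n)$ and at most one point if $G=\mathrm{Sym}(n)$''), but as written your case split does not cover them.

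Second, in the composite-order step, ``reduces to the prime-order analysis with the same conclusion $\d(x,\mathcal I)\le 2$'' requires the chosen power $y=x^{|x|/q}$ to be \emph{adjacent} to an involution, i.e.\ one of (i)--(iii) (or, in $\mathrm{Sym}(n)$, a single block swap or fixed-point transposition) must apply to $y$; if $y$ were only at distance $2$ from $\mathcal I$ you would only get $\d(x,\mathcal I)\le 3$. So the heart of the reduction is the claim that for every non-exceptional $x$ of odd composite order some prime $q\mid |x|$ makes $C_G(x^{|x|/q})$ contain an involution; this is true, but it should be argued --- for instance via the paper's observation that this centraliser contains $(C_q\wr \mathrm{Sym}(k_q))\times \mathrm{Sym}(f_q)$ intersected with $G$, which is how the paper forces the surviving elements to be $9$-cycles with few fixed points. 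With these two patches your argument is correct; the paper reaches the same conclusion more compactly by first reducing to a single cycle with at most three fixed points and then applying the wreath-product centraliser argument, rather than your explicit parity constructions (i)--(iii).
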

\begin{proof}
Let $\mathcal I$ be the set of involutions in $G$. Since there are at least two conjugacy classes of involutions, we find that $\mathcal I$ is a connected subset of $\Gamma(G)$ of diameter at most 3 and we let $\Gamma_2$ be the connected component of $\Gamma(G)$ containing $\mathcal I$. By the theorem of Williams it remains to show that $\Gamma_2$ has small diameter. Let $\mathcal{N}$ be the set of elements of order nine in $G$. We will prove that $\d(x,\mathcal I)\leq 2$ for every element of odd order in $\Gamma_2$, unless $x \in \mathcal N$ and $9\leq n \leq 11$, in which case $\d(x,\mathcal I)\leq 3$ holds. This will show that $\d(x,y)\leq 2+3+2=7$ if $n=8$ or $n\geq 12$ and in the remaining case, we have $ \d(x,y)\leq 3+2+2=8$ unless $x, y\in \mathcal{N}$ where we show that $\d(x,y)\leq 6$ holds. This gives the bound in the theorem.

Let $x\in \Gamma_2$ be an element of odd order and suppose that $\d(x,\mathcal I)> 2$. Write $x$ as a product of disjoint cycles $c_1,\dots,c_k$ and note that $x \sim c_i$ for $i=1,\dots,k$. If any of the $c_i$ fixes four or more points, then there is an involution conjugate to $(1,2)(3,4)$ in $G$ with which $c_i$ commutes, contradicting $\d(x,\mathcal I)>2$. Hence $x$ must be a single cycle which fixes no more than three points. Since $n\geq 8$ we see that 3-cycles commute with involutions, so $x$ fixes at most two points if $G=\mathrm{Alt}(n)$ and at most one point if $G=\mathrm{Sym}(n)$.

Choose $p$ to be the smallest prime dividing the order of $x$ and let $r$ be the order of $x$ divided by $p$. Then $x\sim x^r$ which has cycle type $1^{n-pr}p^r$. Note that $r\neq 1$ or we see that $\langle x \rangle =\langle x^r \rangle$ is a Sylow $p$-subgroup of $G$ and is isolated in $G$, which contradicts $x\in \Gamma_2$. Observe that the centraliser in $G$ of $x^r$ contains $C_p \wr \mathrm{Alt}({r})$. Since $\d(x,\mathcal I)>2$ this centraliser has odd order, and so $r=3$ (as $r$ is odd). Now $r$ is also prime and divides the order of $x$, so this implies $p=3=r$ and we have $x\in \mathcal N$. Observe then that $\d(x,\mathcal I)=3$ since $x^r$ commutes with a 3-cycle and as mentioned above, 3-cycles commute with involutions.

It remains to show that $\d(x,y)\leq 8$ for $x,y\in \mathcal N$ when $n=9,10,11$.  Write $x^3=u_1u_2u_3$ and $y^3=w_1w_2w_3$ where the $u_i$ and $w_i$ are 3-cycles. Since two 3-cycles either  commute with each other or another 3-cycle we see $\d(u_1,w_1)\leq 2$ and therefore $\d(x,y)\leq\d(x,u_1)+2+\d(w_1,y)=6$, as required.
\end{proof}

\begin{lemma}\label{smallcasesAn}
Suppose that $n\leq 7$. Then the maximum diameter of a connected component  of $\Gamma$ is given in Table \ref{tab:alts}.
\end{lemma}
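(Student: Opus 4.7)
The plan is to handle each almost simple group $G$ with $F^*(G)\cong \mathrm{Alt}(n)$ and $Z(G)=1$ for $5\le n\le 7$ by direct inspection, since only finitely many such $G$ arise. Namely, for $n=5,7$ we have $G\in\{\mathrm{Alt}(n),\mathrm{Sym}(n)\}$, and for $n=6$ we additionally need to consider the three almost simple extensions coming from the exceptional outer automorphism of $\mathrm{Alt}(6)$, namely $\mathrm{PGL}_2(9)$, $\mathrm{M}_{10}$ and $\mathrm{P}\Gamma\mathrm{L}_2(9)$. All of these have small order, so their conjugacy classes, element orders and centralizer orders can be read off directly from the character tables in the ATLAS~\cite{ATLAS}.

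For each such $G$, I would first determine the connected components of the prime graph $\pi(G)$ from the list of element orders. By Theorem~\ref{primegraph}, this identifies the $G$-orbits of connected components of $\Gamma=\Gamma(G)$, together with the set of prime orders occurring in each component. Next, by Lemma~\ref{primeelements}, the diameter of a component $\Psi$ is at most two more than the maximum distance in $\Gamma$ between two prime-order elements of $\Psi$, so I would pick one representative from each $G$-class of prime-order elements in $\Psi$ and estimate pairwise distances using centralizer information.

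Many of these components will be trivial: if $p\in\pi(G)$ is such that a Sylow $p$-subgroup $P$ is cyclic and self-centralizing (equivalently, $P$ is an isolated subgroup), then Lemma~\ref{isolated} shows the corresponding component is $P^\#$ and has diameter $1$. This handles, for example, the Sylow $5$-subgroups in $\mathrm{Alt}(5)$, $\mathrm{Sym}(5)$, $\mathrm{Alt}(6)$, $\mathrm{Sym}(6)$, $\mathrm{Alt}(7)$ and $\mathrm{Sym}(7)$, as well as the Sylow $7$-subgroups in $\mathrm{Alt}(7)$ and $\mathrm{Sym}(7)$, and similarly the Sylow $3$-subgroup in $\mathrm{Alt}(5)$. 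The components that remain to be controlled are those containing an involution; when $G$ has at least two classes of involutions, Lemma~\ref{involutionsgood} immediately bounds the diameter of the involution subgraph by $3$, and then adjoining at most one step on each side (to reach a generic element of even order, or an odd-order element with even centralizer) yields the claimed bound.

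The main obstacle is really just organizational: the small simple groups $\mathrm{Alt}(5)$ and $\mathrm{Alt}(6)$ have a single class of involutions, so Lemma~\ref{involutionsgood} does not apply directly, and the corresponding component in $G$ has to be examined by hand. However, both of these groups are small enough that one can verify the diameter of each component of $\Gamma$ by inspecting the list of element orders and centralizers; the same is true of their almost simple extensions. Having done this for each $G$, the maximum diameter over all connected components yields the entries of Table~\ref{tab:alts}.
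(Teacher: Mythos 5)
Your proposal is essentially the paper's own proof: the paper simply observes that for these finitely many almost simple groups the answer is obtained by a {\sc Magma} computation or by character-table reasoning as in the sporadic section, and your outline is a by-hand organisation of exactly that finite check (your list of the relevant groups for $n=5,6,7$ is complete). One slip should be corrected: the Sylow $5$-subgroups of $\mathrm{Sym}(7)$ are \emph{not} isolated --- a $5$-cycle in $\mathrm{Sym}(7)$ fixes two points, so its centraliser is $C_5\times\mathrm{Sym}(2)$ of order $10$, and $\mathrm{Sym}(7)$ has elements of order $10$; hence the $5$-elements lie in the component containing the involutions rather than forming diameter-one components. This does not upset the table, since such elements have even centraliser and so are at distance $1$ from an involution, whence your bound $1+3+1=5$ still covers them. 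Finally, note that the lemmas you invoke only give upper bounds; the exact entries of Table~\ref{tab:alts} (for instance the value $6$ for $\mathrm{Alt}(6)$ and $\mathrm{M}_{10}$, where Lemma~\ref{involutionsgood} is unavailable) require the explicit inspection you defer to, which is precisely what the paper's computer calculation supplies.
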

\begin{proof}
For these values of $n$,  {\sc Magma} \cite{Magma} returns the answer fairly promptly, or one can use the character tables to reason as in  Section \ref{sporsec}.
\end{proof}

\begin{table}[h]
\centering
\begin{tabular}{|c |c |}
\hline
$G$ 			&		Maximum diameter of a component of $\Gamma$ \\ \hline
$\mathrm {Alt}(5)$	&		1 \\
$\mathrm {Sym}(5)$	&		5 \\
$\mathrm {Alt}(6)$	&		6 \\
$\mathrm {Sym}(6)$	&		4 \\
$\mathrm M_{10}$	&	6 \\
$\mathrm{PGL}_2(9)$	&	5 \\
$\mathrm{Aut}(\mathrm {Alt}(6))$	&	4 \\
$\mathrm {Alt}(7)$	&		5 \\
$\mathrm {Sym(7)}$	&		5 \\ \hline
\end{tabular}
\medskip
\caption{Maximum diameters of connected components for some almost simple groups.}
\label{tab:alts}
\end{table}

\section{Almost simple groups of Lie type}\label{Lie}

In this section we suppose that $K=F^*(G)$ is a finite simple group of Lie type defined in characteristic $r$. We may and do therefore assume that $G$ is a subgroup of $\mathrm{Aut}(K)$. The structure of $\mathrm{Aut}(K)/K$ is well understood and is revealed by \cite[Theorem 2.5.12]{GLS3}, which we  use without reference. We let $K^*$ be the subgroup of $G$ generated by all elements that act by conjugation  as inner-diagonal automorphisms on $K$.

For $p$ a prime which divides $|G|$,   let $\Gamma_p$ be a connected component of $G$ which contains an element of order $p$. Because of Theorem~\ref{Williams} our primary focus in on $\Gamma_2$ as if $\Gamma_p$ does not contain an involution, then $\Gamma_p$ has diameter $1$.

Our first  results examine  the commuting graphs in the small rank groups for which our generic argument fails.

\begin{lemma}\label{PSLPsi}
Suppose that $G \cong \PGL_2(r^a)$ or $\PSL_2(r^a)$ with $r$ odd. Then $\Gamma_2$ has diameter at most $5$ or $5<r^a\le 13$ in which case $\Gamma_2$ has diameter at most $6$. Furthermore every connected component in $\Gamma(\PSL_2(5))$ has diameter $1$.
\end{lemma}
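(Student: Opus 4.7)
The plan is to show that every vertex of $\Gamma_2$ lies at distance at most $1$ from an involution, and then to bound the diameter of the subgraph of $\Gamma_2$ induced on the set $\mathcal I$ of involutions of $G$. Write $q = r^a$. The case $q=5$ is immediate: $\PSL_2(5)\cong \mathrm{Alt}(5)$ has every element centraliser abelian (Klein four-group for involutions, cyclic of prime order otherwise), so each connected component of $\Gamma$ is the set $A^\#$ for $A$ a maximal abelian subgroup of $G$, and therefore has diameter $1$; the case $\PGL_2(5)\cong \mathrm{Sym}(5)$ is covered by Table~\ref{tab:alts}. From now on I would assume $q\ge 7$.

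I would first describe centralisers in $G$. In $\PSL_2(q)$, every non-identity element is either unipotent, with centraliser its ambient Sylow $r$-subgroup (elementary abelian of order $q$), or semisimple with centraliser the unique maximal torus $T$ containing it, unless it is an involution in which case the centraliser is $N_G(T)\cong D_{q\pm 1}$. The structure in $\PGL_2(q)$ is analogous, with tori of orders $q\pm 1$ and outer involutions having dihedral centralisers. Two consequences follow. First, no unipotent element lies in $\Gamma_2$, because the Sylow $r$-subgroup is self-centralising and has odd order, so the connected component of such an element cannot contain any involution. Second, for any non-involution $x\in \Gamma_2$, $x$ is semisimple with cyclic centraliser $T$; following a shortest path from $x$ to an involution, every intermediate non-involution vertex has the same centraliser $T$, so the first involution reached on the path lies in $T$ and commutes with $x$. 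Thus $\d(x,\mathcal I)\le 1$.

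To finish it suffices to bound $\d(t_1,t_2)$ for $t_1,t_2\in \mathcal I$. If $G\cong \PGL_2(q)$, then $G$ has two conjugacy classes of involutions (one inside $\PSL_2(q)$ and one outer), so Lemma~\ref{involutionsgood} yields $\d(t_1,t_2)\le 3$ directly. If $G\cong \PSL_2(q)$, all involutions are conjugate. Write $\langle t_1,t_2\rangle = D_{2m}$ with $m = |t_1 t_2|$. If $m$ is even then the central involution $(t_1 t_2)^{m/2}$ commutes with both, so $\d(t_1,t_2)\le 2$. If $m$ is odd, I would pick an involution $t_1'\in C_G(t_1)\setminus\{t_1\}$ (available because $C_G(t_1)\cong D_{q\pm 1}$ contains at least three involutions for $q\ge 7$) arranged so that $|t_1' t_2|$ is even; a counting argument over the involutions in the dihedral centraliser shows this is achievable for $q\ge 17$, yielding $\d(t_1,t_2)\le 3$. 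For the four values $q\in\{7,9,11,13\}$ the parity control just fails in some configurations and one inserts one extra intermediate involution, giving $\d(t_1,t_2)\le 4$. Combining with the preceding paragraph gives the diameter bounds of $5$ and $6$ stated in the lemma.

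The main obstacle is the odd-$m$ subcase for $G\cong \PSL_2(q)$: showing that a suitable parity-controlled choice of $t_1'$ exists for $q\ge 17$ and verifying by careful case analysis that the diameter degrades by exactly one for the small exceptional values $q\in\{7,9,11,13\}$, where the Sylow $2$-subgroups and dihedral centralisers are just too small for the generic counting argument to close. All remaining steps are routine once the structural description of centralisers is in place.
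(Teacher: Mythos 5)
Your overall architecture is the same as the paper's: show every vertex of $\Gamma_2$ is at distance at most $1$ from an involution (the paper gets this from the fact that the subgroups of order $(r^a+\epsilon)/2$ with $\epsilon\equiv r^a \pmod 4$ are isolated, so $\Gamma_2$ consists of elements lying in the cyclic tori that contain involutions; your ``constant centraliser along a shortest path'' argument reaches the same conclusion and is fine), then bound the distance between two involutions. For $\PGL_2(r^a)$ your use of the two classes of involutions and Lemma~\ref{involutionsgood} is exactly the paper's argument.

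The gap is in the $\PSL_2(r^a)$ case, and it is precisely the step you yourself flag as ``the main obstacle.'' The claim that for $q\ge 17$ one can always choose an involution $t_1'\in C_G(t_1)$ with $|t_1't_2|$ even is the entire non-routine content of the lemma, and you give no argument for it beyond the phrase ``a counting argument over the involutions in the dihedral centraliser shows this is achievable.'' To make this work you would have to control, among the roughly $(q-\epsilon)/2$ involutions commuting with $t_1$, how many $t_1'$ yield a product $t_1't_2$ that is unipotent (order $r$, odd) or lies in the odd-order torus of order $(q+\epsilon)/2$, or lies in the even-order torus but with odd element order, and show these bad cases do not exhaust the supply. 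None of that is done. Likewise the assertion that for $q\in\{7,9,11,13\}$ the involution-graph diameter degrades to exactly $4$ is unverified. The paper avoids both problems by citing Bates--Bundy--Perkins--Rowley \cite[Theorem 1.1 (ii),(iii)]{BBPR} for the distance-$3$ bound when $r^a>13$, and by a direct machine computation for $r^a\le 13$. As written, your proposal establishes the reduction to the commuting involution graph but not the bound on its diameter; it would become a complete proof either by carrying out the counting in detail or by invoking the same citation and computation the paper uses.
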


\begin{proof}  If $G \cong \PGL_2(r^a)$, then  $G$ has two conjugacy classes of involutions. Furthermore, every element of $G$ other than the elements of order $r$ is connected to an involution. Thus, as the Sylow $r$-subgroups of $G$ are isolated, Lemma~\ref{involutionsgood} implies that $\Gamma_2$ has diameter at most $5$.

Suppose that $G \cong \PSL_2(r^a)$. If $r^a < 13$ an elementary computer calculation \cite{Magma}  reveals the stated result ($6$ being attained for $r^a=13$ and $r^a=9$). If $r^a>13$, then \cite[Theorem 1.1 (ii) and (iii)]{BBPR} asserts that in the commuting involution graph, two involutions have distance at most $3$ apart. Since the subgroups of order $(r^a+\epsilon)/2$, $\epsilon \equiv r^a \pmod 4$ are isolated in $\PSL_2(r^a)$ (see \cite[Table 1d]{Williams}),  $\Gamma_2$ consists of elements of order dividing $(r^a-\epsilon)/2$ and so every elements of $\Gamma_2$ has distance at most $1$ from an involution. Therefore $\Gamma_2$ has diameter at most $5$.
\end{proof}
Facts about ${}^2\mathrm G_2(3^a)$ are taken from  \cite[Theorem]{Ward}. We use them without specific reference in the next lemma.
\begin{lemma} \label{smallree} Suppose  $G=K\cong \mathrm {}^2\mathrm G_2(3^a)$ with $a>1$ odd. Then $\Gamma_3= \Gamma_2$ has diameter at most $8$.

\end{lemma}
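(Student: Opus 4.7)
The plan is to establish $\Gamma_2=\Gamma_3$ and then bound the distance between any two prime order vertices of $\Gamma_2$ by $6$; together with Lemma~\ref{primeelements} this yields that $\Gamma_2$ has diameter at most $6+2=8$. For the first assertion, the centraliser of an involution has the shape $\ct\cong\langle t\rangle\times\PSL_2(q)$ and the $\PSL_2(q)$-factor contains a Sylow $3$-subgroup of order $q$, so $t$ commutes with a non-trivial $3$-element; by Theorem~\ref{primegraph} the prime graph components containing $2$ and $3$ coincide, whence $\Gamma_2=\Gamma_3$.

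The key local fact is that, for each involution $t$, the Sylow $3$-subgroup $V$ of the $\PSL_2(q)$-factor of $\ct$ consists of ``short-root'' $3$-elements with $\langle t\rangle\times V \le C_G(w)$ for each $w\in V^\#$, and $V\le U$ for some Sylow $3$-subgroup $U$ of $G$. Consequently every long root element $z\in Z(U)^\#$ lies at distance $2$ from $\mathcal I$ (the set of involutions) via the path $z\sim w\sim t$ for any $w\in V^\#$, every other $3$-element in $U$ commutes with $Z(U)$, and every semisimple element of prime order lies at distance $1$ from $\mathcal I$ since the relevant maximal tori are conjugate to tori of $\PSL_2(q)\le\ct$. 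Elements of order $9$ cube into $Z(U)$ and so have distance at most $3$ from $\mathcal I$; this is absorbed by the long-root analysis below.

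The main obstacle is estimating the remaining pairwise distances: between two long root elements $z_1\in Z(U_1)$, $z_2\in Z(U_2)$ with $U_1\cap U_2=1$ (the Ree groups being excluded from Lemma~\ref{rootgroups}, so the fundamental-$\SL_2$ argument is unavailable), and between two involutions. For long root elements I would fix an involution $t$ whose $\PSL_2(q)$-factor of $\ct$ admits two distinct Sylow $3$-subgroups $V_i\le U_i$ for $i=1,2$ --- possible because $\PSL_2(q)$ has $q+1$ Sylow $3$-subgroups and these can be distributed across the two prescribed Sylow $3$-subgroups of $G$ by a Sylow-counting argument --- giving the path $z_1\sim w_1\sim t\sim w_2\sim z_2$ of length $4$ with $w_i\in V_i^\#$. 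For involutions $t_1,t_2$: if $|t_1t_2|$ is even or odd dividing $q^2-1$ a commuting involution exists in $\langle t_1t_2\rangle$ or in its torus (even-order cyclic subgroup), giving distance $\le 2$; while if $|t_1t_2|\mid q\pm\sqrt{3q}+1$ the pair sits in the Frobenius subgroup $N_G(T)=T{:}6$ and no involution commutes with $t_1t_2$, so I would select an involution $s$ in the $\PSL_2(q)$-factor of $\ct_1$ with $|st_2|$ falling into one of the easier cases, yielding $\d(t_1,t_2)\le 1+2+1=4$ (a routine counting argument shows such $s$ exists among the conjugacy class of $q(q\pm 1)/2$ involutions of $\PSL_2(q)$). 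Combining everything, a semisimple-to-semisimple pair $(x,y)$ satisfies $\d(x,y)\le 1+4+1=6$; a mixed or $3$-element pair is handled by routing through long root intermediaries in the respective Sylow $3$-subgroups and likewise gives $\le 6$; hence $\d(x,y)\le 6$ for all prime order $x,y\in\Gamma_2$ and the claimed bound of $8$ follows from Lemma~\ref{primeelements}.
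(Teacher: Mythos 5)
There are two genuine gaps. The crux of any argument here is the claim you isolate as the ``main obstacle'': that for \emph{any} two prescribed Sylow $3$-subgroups $U_1,U_2$ of $G$ there is a single involution $t$ with $C_{U_1}(t)\neq 1\neq C_{U_2}(t)$. You assert this follows ``by a Sylow-counting argument'' from the fact that the $\PSL_2(q)$-factor of $C_G(t)$ has $q+1$ Sylow $3$-subgroups, but that fact only tells you which $q+1$ of the $q^3+1$ Sylow $3$-subgroups of $G$ a \emph{given} involution touches; it does not show that every pair $\{U_1,U_2\}$ is covered by some involution, which is exactly the nontrivial point. The paper proves it by fixing $T\in\Syl_3(G)$ and an involution $i\in N_G(T)$, and using that $T$ acts regularly on $\Syl_3(G)\setminus\{T\}$ together with $|C_T(i)|=3^a$ and $N_T(C_T(i)\langle i\rangle)=C_T(i)$ to see that the $3^{2a}$ $T$-conjugates of $i$ account for all $3^{3a}$ Sylow $3$-subgroups other than $T$; without some such count your length-$4$ path $z_1\sim w_1\sim t\sim w_2\sim z_2$ is not justified. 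The second gap is your statement that ``every semisimple element of prime order lies at distance $1$ from $\mathcal I$'': this is false for elements of (prime) order dividing $q\pm\sqrt{3q}+1$, which are semisimple, have centralisers of order coprime to $6$, and commute with no involution. What is actually needed, and what the paper supplies from Ward's theorem, is that these elements lie in isolated Hall subgroups of order $q\pm\sqrt{3q}+1$ and hence are not vertices of $\Gamma_2$ at all; your proposal never addresses whether such elements belong to $\Gamma_2$, so your claimed bound of $6$ between prime-order vertices of $\Gamma_2$ is not established.

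Two further, smaller points. Your involution-to-involution analysis again leans on an asserted ``routine counting argument'' (choosing $s\in C_G(t_1)$ with $|st_2|$ in an easy case), and is in fact unnecessary: once one knows every order-$3$ element pair is at distance at most $4$ and that every vertex of $\Gamma_2$ (involutions included, since $C_G(t)\cong 2\times\PSL_2(q)$ contains $3$-elements) is within distance $2$ of the set of order-$3$ elements, the diameter bound $2+4+2=8$ follows directly, which is how the paper concludes; note also that your mixed-pair bound of $6$ does not obviously come out of ``routing through long root intermediaries'' without this cleaner assembly, and your semisimple-to-semisimple bound of $6$ depends on the unproved involution-distance claim. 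Finally, be careful that Lemma~\ref{involutionsgood} is unavailable here because ${}^2\mathrm G_2(3^a)$ has a single class of involutions, so none of these involution statements can be outsourced to it.
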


\begin{proof} Let $\mathcal T$ be the set of  elements of order $3$ in $G$, $T $ be a Sylow $3$-subgroup  of $G$ and let $i \in N_G(T)$ be an involution.  Note that every element of order $3$ in $T$ is contained in $T'$ which itself is elementary abelian of order $3^{2a}$. We also know  $C_G(i) \cong 2 \times \PSL_2(3^a)$ and so $i$ centralizes subgroups of order $3^a$ in exactly $3^a+1$ Sylow $3$-subgroups of $G$.  Since  $T$ acts regularly on $\Syl_3(G) \setminus \{T\}$,  $|C_T(i)| = 3^a$ and $N_T(C_T(i)\langle i \rangle)= C_T(i)$, we obtain that if $S \in \syl_3(G) \setminus \{T\}$, then there is an involution $j \in N_G(T)$ such that $C_S(j) \ne 1$.  Let  $x, y \in \mathcal T $. Then $x \in U'$ and $y \in V'$ for some $U, V \in \syl_3(G)$. There is a unique involution $j$ which normalizes both $U$ and $V$ and we have  $C_U(j) $ and $C_V(j)$ are non-trivial. It follows that  $\d(x,y) \le 4$. Moreover,  this yields $\Gamma_3=\Gamma_2$ contains all elements of even order and all elements of order divisible by $3$.

 Now suppose that $x \in \Gamma_3$. If $C_G(x)$ has order divisible by $3$ or 2, then $\d(x,\mathcal T) \le 2$.  So suppose that $C_G(x)$ has odd order coprime to $3$.

 Recall that $$|G|= 3^{3a}(3^{3a}+1)(3^a-1)= 3^{3a} (3^a-1)(3^a+1)(3^a+3^{\frac{{a+1}}{2}}+1)(3^a-3^{\frac{{a+1}}{2}}+1)$$
Suppose that $p$ is an odd prime greater that $3$. Then $p$ divides exactly one of $(3^a-1)$, $(3^a+1)$, $(3^a+3^{\frac{{a+1}}{2}}+1)$ or  $(3^a-3^{\frac{{a+1}}{2}}+1)$.
If $x$ has order dividing $3^a+1$ or $3^a-1$, then $x$ can be seen the centralizer of an involution, which is a contradiction. So $x$  has order dividing $3^a\pm 3^{\frac{{a+1}}{2}}+1$. By \cite[Theorem (4)]{Ward}, $G$ has a Hall subgroup of order $3^a\pm 3^{\frac{{a+1}}{2}}+1$ and these subgroups are isolated.  Since $x$ is not contained in any such subgroup, this shows that $\Gamma_2=\Gamma_3$ has diameter at most 8.
\end{proof}

\begin{lemma}\label{rankone} Suppose $K\cong \PSL_2(r^a)$,  ${}^2\B_2(2^a)$ with $a\ge 3$ odd,  or ${}^2\mathrm G_2(3^a)$ with $a \ge 3$ odd.
Then $\Gamma_r$ has diameter at most $10$.
Furthermore, if $G$ is not $\PSL_2(r^a)$, $\PGL_2(r^a)$ with $r$ odd, then $\Gamma_r= \Gamma_2$.  \end{lemma}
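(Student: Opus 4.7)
The plan is to split into cases by the family of $K$ and by the structure of $G$ inside $\mathrm{Aut}(K)$. The family $K \cong {}^2\mathrm G_2(3^a)$ with $a \ge 3$ odd is already handled by Lemma~\ref{smallree}, so I would focus on $K \cong \PSL_2(r^a)$ and $K \cong {}^2\B_2(2^a)$ with $a \ge 3$ odd. The first subcase I would dispose of is $K \cong \PSL_2(r^a)$ with $r$ odd and $G \le \PGL_2(r^a)$: here a Sylow $r$-subgroup $T$ of $G$ is abelian, self-centralising, and trivial-intersection, so each connected component of $\Gamma_r$ is a single clique $T^\#$ of diameter $1$, and since involutions in $\PGL_2(r^a)$ have dihedral $r'$-centralizers, $\Gamma_r \ne \Gamma_2$. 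This matches the ``furthermore'' exception. In the remaining subcases I would establish $\Gamma_r = \Gamma_2$: for $r=2$ this is trivial, and for $K \cong \PSL_2(r^a)$ with $r$ odd and $G \not\le \PGL_2(r^a)$, I would pick $\phi \in G$ inducing a field automorphism of prime order $p$ on $K$; by Lemma~\ref{lem:lietypecents}(a), $O^{r'}(C_K(\phi)) \cong \PSL_2(r^{a/p})$ contains both an element $u$ of order $r$ and an involution $t$, giving the path $u \sim \phi \sim t$ that merges the two components.

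For the diameter bound I would use that each relevant $K$ has a single $K$-conjugacy class of involutions stable under $\mathrm{Aut}(K)$, so $a^G = a^K$ for every involution $a \in K$. Lemma~\ref{outside} then gives $d(x, a^G) \le 4$ for every $x \in G \setminus K$, and Lemma~\ref{outside2} sharpens this to $d(x, a^G) \le 3$ whenever $\langle x \rangle \cap K$ contains a prime-order element with cyclic $K$-centralizer (the generic situation in rank-one groups, where non-involution non-$r$ centralizers in $\PSL_2(r^a)$ and ${}^2\B_2(2^a)$ are cyclic pieces of split or non-split tori). To bound $d(x, y)$ for $x, y \in \Gamma_r = \Gamma_2$ I would route through two $K$-involutions $a_1, a_2$. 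Inside $K$, non-involution elements of $\Gamma_2$ are within distance $1$ or $2$ of an involution via cyclic torus centralizers or via a $K$-conjugate of $\phi$ for elements of order $r$. The distance $d(a_1, a_2) \le 3$ inside $\Gamma(G)$ follows from the commuting involution graph bound in \cite{BBPR} for $\PSL_2(r^a)$ with $r^a > 13$, and by passage through a $K$-conjugate of $\phi$ in the ${}^2\B_2$ family and the $\PSL_2(2^a)$ case; the small cases $r^a \le 13$ in the $\PSL_2$ family are handled by direct computation as in Lemma~\ref{PSLPsi}.

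The main obstacle is obtaining the sharp bound of $10$ rather than $11$: the worst case $x, y \in G \setminus K$ gives $4 + 3 + 4 = 11$ using the generic estimate of Lemma~\ref{outside} on both endpoints, which exceeds the target by one. To save a unit I would show that for any $x \in G \setminus K$ lying in $\Gamma_2$, either $\langle x \rangle$ contains a $K$-involution (in which case the involution power of $x$ gives $d(x, a^G) \le 2$), or $\langle x \rangle \cap K$ contains a non-involution non-$r$ element of prime order with cyclic $K$-centralizer (so Lemma~\ref{outside2} gives $d(x, a^G) \le 3$), or $x$ is a pure field-type element whose $K$-centralizer is a proper subfield subgroup containing an involution directly (so $d(x, a^G) \le 1$); this trichotomy covers every $x \in G \setminus K$ lying in $\Gamma_2$ by a case analysis on the structure of $\langle x \rangle$ modulo $K$, and at least one endpoint always lands in an improved bracket, yielding the desired sum of at most $10$.
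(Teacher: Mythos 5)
Your overall strategy (hub the argument on a single $\mathrm{Aut}(K)$-stable class of involutions of $K$ and use Lemmas~\ref{outside} and \ref{outside2}) is genuinely different from the paper's, but as written it has concrete gaps. First, you set aside $K\cong {}^2\mathrm G_2(3^a)$ as ``already handled by Lemma~\ref{smallree}'', but that lemma treats only $G=K$; the present lemma must also cover $G>K$ (extensions by field automorphisms), which your later analysis, restricted to $\PSL_2$ and ${}^2\B_2$, never touches. Second, your proof that $\Gamma_r=\Gamma_2$ when $G\not\le \PGL_2(r^a)$ ``picks $\phi\in G$ inducing a field automorphism of prime order'' -- such an element need not exist: when $K\cong \PSL_2(r^a)$ with $r$ odd and $a$ even, $G\setminus K^*$ may consist entirely of diagonal-times-field elements and contain no element of prime order (the $\mathrm M_{10}$-type situation), so the merging path $u\sim\phi\sim t$ is unavailable; in that case the Sylow $r$-subgroup can remain isolated, so the merging claim cannot be repaired and the diameter must be bounded directly, which is exactly why the paper isolates this case and bounds the diameter by $8$ via Lemmas~\ref{primeelements} and \ref{PSLPsi}. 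Third, the decisive ``save one unit'' step is asserted rather than proved: your middle bracket quotes Lemma~\ref{outside2} while dropping its essential hypothesis that $C_{\langle x\rangle K}(a)$ contains a $p$-element for the \emph{same} prime $p$ as $x_p$; since involution centralizers in $K$ are $2$-groups in ${}^2\B_2(2^a)$ and dihedral of order $q-\epsilon$ in $\PSL_2(q)$ with $q$ odd, this hypothesis is precisely the delicate point and in general can only be supplied by an outer field automorphism, returning you to the existence problem above. Relatedly, your budget $\d(a_1,a_2)\le 3$ for $K$-involutions fails in the characteristic-$2$ families: there the commuting involution graph of $K$ itself is disconnected (involution centralizers are $2$-groups in TI Sylow $2$-subgroups), and the generic bound obtained by routing through one conjugate of $\phi$ is $4$, not $3$, so your arithmetic needs redoing.

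For comparison, the paper inverts the roles: the hub is the set of $r$-elements of $K^*$, any two of which are within distance $4$ because $K$ is $2$-transitive on its Sylow $r$-subgroups and one can choose a single prime-order field automorphism with nontrivial fixed points in both Sylow subgroups; it then shows every element of $\Gamma_r$ is within $3$ of an $r$-element (by getting within $2$ of a field automorphism), giving $3+4+3=10$, with a bespoke patch via Lemma~\ref{involutionsgood} for a $\PGL_2(r^a)$ exception and the separate treatment of the no-prime-order-outside-$K^*$ coset noted above. Your involution-centred route could likely be made to work, but you would have to prove the trichotomy, verify the $p$-element hypothesis of Lemma~\ref{outside2} case by case, correct the involution-distance bound in characteristic $2$, and add arguments for the ${}^2\mathrm G_2$ extensions and the diagonal-field coset case.
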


\begin{proof}  If $G\cong  \PSL_2(r^a)$, $\PGL_2(r^a)$ or ${}^2\B_2(2^a)$, then  the Sylow $r$-subgroups of $G$ are  isolated and consequently $\Gamma_r$ has diameter one in the first two cases and two in the case of the Suzuki groups. We also note that in the first two cases, when $r$ is odd,  we have $2 \not \in \pi(\Gamma_r)$.   If $G =K \cong {}^2\mathrm G_2(3^a)$, then $\Gamma_3$ has diameter at most $8$ by Lemma~\ref{smallree}.
Thus we may assume that $G$ is not  isomorphic to $ \PSL_2(r^a)$, $\PGL_2(r^a)$,  ${}^2\B_2(2^a)$ or ${}^2\mathrm G_2(3^a)$. Especially we may assume that $G> K^*$.

 If there are no elements of prime order in $G \setminus K^*$, then$K \cong \PSL_2(r^a)$ with $r$ odd and $a$ even and  $\Gamma_r$  has diameter at most $2$ more than the diameter of a connected component of $\Gamma(K^*)$ containing an $r$-element. Thus  $\Gamma_r$ has diameter at most $8$ by Lemmas~\ref{primeelements} and \ref{PSLPsi}. Hence $G \setminus K^*$ contains  an element  $f$  of prime order $p$. Since $K^*$ has no graph automorphisms, $f$ must be a field automorphism. Therefore $C_K(f)  $ is a group of the same type as $K$ but defined over the field of order $q^{1/p}$ by Lemma~\ref{lem:lietypecents} (i). In particular, both $2$ and  $r$ divides  $|C_K(f)|$ and so $\Gamma_r=\Gamma_2$.

\begin{claim}\label{4} Any two $r$-elements of $K^*$ are at most distance $4$ apart in $\Gamma_r$.\end{claim}

\medskip

Let $x$ and $y$ have order $r$ and let $R_x, R_y \in \syl_r(K)$ with $x\in R_x$ and $y \in R_y$. Since $R_x$ is an $r$-group, if $R_x=R_y$, then $\d(x,y)\le 2$. So suppose $R_x \ne R_y$.  Since $K$ acts two transitively on  $\Syl_r(K)$ by conjugation, there exists $f \in G\setminus K^*$ of order $p$ such that
$|C_{R_x}(f)|= |C_{R_y}(f)| \ne 1$.  Now there exists $x_1 \in C_{R_x}(f) \cap Z(R_x)$, $y_1 \in C_{R_y}(f)\cap Z(R_y)$,  such that $$x \sim x_1 \sim f \sim y_1 \sim y$$ so $\d(x,y) \leq 4$ as claimed.\hfill$\blacksquare$

\medskip

We   intend to show that

\begin{claim} \label{3}every  element of $\Gamma_r$  has distance at most $3$  from an $r$-element unless $K^* \cong \PGL_2(r)$ and $x \in K^*\setminus K$ has order   $2m>2$ with $m$ odd, in which case $x$ has distance at most  $4$ from an $r$-element and has even order.  \end{claim}

\medskip

To prove this we only have to show that such elements have distance $2$ from a field automorphism unless the exceptional case occurs.

  Assume that  $x \in K^*\cap  \Gamma_r$.
We may as well suppose that $x$ is an $r'$-element as otherwise $x$ is connected to an $r$-element.  Thus $x$ is contained in a torus $T$ of $K^*$.

Suppose   $x$ has even order. Then $x $ is incident to an $i$ involution. If $i \in K$, then it centralized by a field automorphism of $G$ and we have our claim. If $K$ is a Suzuki-Ree group or if $K ^* = \PSL_2(r^a)$,  we do not need to work further.  If $K^* \cong \PGL_2(r^a)$ and $i \in K^*\setminus K$, then $i$ is joined to an involution $j \in K$ and this is
the exceptional configuration.

Suppose $x$ has odd order.  If some power of $x$ which is not the identity, commutes with an $r$-element, we are done.  So suppose that no power of $x$ commutes with such an element.
Let $y$ be  power of $x$ such that $y$ has prime order $p$. Then  Lemma~\ref{lem:lietypecents} (c) implies that $C_{K^*}(y) \ge T$ and $C_{K^*}(y)/T$ is an elementary abelian $p$-group which is  isomorphic to a subgroup of $Z$ the centre of the universal version   of $K^*$. Since  in all cases under investigation  $Z$ has order at most $2$, we have  $C_{K^*}(y)=C_{K^*}(x)=T$. Since $T$ is abelian, any element of $K^*$ which is joined to $y$ is also joined to $x$.  Hence any shortest path to an $r$-element from $x$ must go via  a field automorphism $f$ of prime order such that $C_T(f) \ne 1$. But then $x$ has distance at most $2$ from $f$. Thus our claim is established if $x \in K^* \cap  \Gamma_r$.

Now suppose that $x \in G \setminus K^*$.   Let $p$ be a prime that divides the order of $xK^*$, $\langle x^*\rangle \in \syl_p(  \langle x \rangle)$ and $y \in \langle x\rangle$ have  order $p$. Then $x \sim y$. If $y \not \in K^*$, then $y$ is a field automorphism by \cite[Theorem 4.9.1 (d)]{GLS3} and this means $x$ has distance at most $2$ from an $r$-element.

So suppose that $y \in K^*$. We may as well assume that $y$ is not an $r$-element.  If $y$ has order $2$, then $y$ commutes with a field automorphism of $K$ and so $x$ has distance $3$ from an $r$-element. So $y$ has order $p$ different from $2$ and $r$. Since $p$ is odd, $K^*\langle x \rangle = K^*\langle f\rangle$ for some field automorphism $f$ of $K$.\footnote{The only danger case arises if $K^*\cong \PSL_2(r^a)$ with $r$ odd, $xK$ has order $2$ and is a product of an  outer diagonal and a field automorphism. In this case $xK$ contains no elements of order $2$.}
Moreover the Sylow $p$-subgroups of $K^*$ are abelian and if $P \in \syl_p(K^*\langle x\rangle)$ with $x^* \in P$. Then $P = \langle x^*\rangle T$ where $T$ is an abelian subgroup of $K^*$. It follows that $y \in Z(P)$. Since $P$ contains a field automorphism, once again we have $y$ is incident to a field automorphism. This means that $x$ has distance at most $3$ from an $r$-element. This completes our verification of \ref{3}.\hfill$\blacksquare$

\medskip

Combining \ref{4} and \ref{3} yields our assertion that  the diameter of $\Gamma_r$ is at most $10$ so long as $K^* \ncong \PGL_2(r^a)$. So suppose that $K^* \cong \PGL_2(r^a)$ and that $x, y \in G$ have $\d(x,y) \ge 11$. Then we may assume that $x$ has distance $4$ from an $r$-element and is incident to an involution $i \in K^* $. Also $y$ has distance a least $3$ from an $r$-element. If $y$ has distance $4$, then $x$ and $y$ have even order by \ref{3}. But then $\d(x,y) \le 5$ by Lemma~\ref{involutionsgood}, which is a contradiction. Now $y $ has distance $2$ from a field automorphism. Thus $y$ has distance $3$ from an involution $j$ and therefore $\d(x,y) \le \d(x, i)+\d(y,j) + \d(i,j)\le 1+2+3=6$  again by Lemma~\ref{involutionsgood}.  This contradiction completes our proof.
\end{proof}

The bound in Lemma~\ref{rankone} is probably too high however it is not too far from reality. The following remark originates from a computer calculation.

\begin{remark} In the group ${}^2\B_2(2^5){:}5$ there are elements of order $25$ that have distance $8$ apart. In particular, the diameter of $\Gamma_2$ for this group is at least $8$.
\end{remark}

\begin{lemma}\label{Tits}  Suppose that $K={}^2\mathrm F_4(2)'$. Then $\Gamma_2$ has diameter at most $5$.
\end{lemma}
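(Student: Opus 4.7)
The plan is to mirror the argument for the sporadic groups in Theorem~\ref{Spor}: I will show that every vertex of $\Gamma_2$ is adjacent to an involution, and then close the argument by Lemma~\ref{involutionsgood}. Since $|\mathrm{Out}(K)|=2$, we have $G=K$ or $G=\mathrm{Aut}(K)={}^2\mathrm F_4(2)$, and in both cases the ATLAS exhibits at least two conjugacy classes of involutions in $G$, so Lemma~\ref{involutionsgood} applies and gives $\d(i,j)\le 3$ for all involutions $i,j$ of $G$, all of which lie in $\Gamma_2$.

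Next I would read off from the ATLAS that $|K|=2^{11}\cdot 3^3\cdot 5^2\cdot 13$ and that the element orders in $G$ are drawn from $\{1,2,3,4,5,6,8,10,12,13,16\}$, together with possibly a few extra outer orders in ${}^2\mathrm F_4(2)$, all of which are necessarily even. A Sylow $13$-subgroup of $G$ is cyclic of order $13$ and self-centralising, so it is isolated in $G$ by Lemma~\ref{isolated}; its non-trivial elements therefore form their own connected components of $\Gamma$ of diameter $1$ and contribute nothing to $\Gamma_2$. Consequently every element order occurring in $\Gamma_2$ is either even or lies in $\{3,5\}$.

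The substantive step is then the following claim: for every vertex $x$ of $\Gamma_2$ we have $\d(x,\mathcal{I})\le 1$, where $\mathcal{I}$ is the set of involutions of $G$. If $|x|$ is even, then $x$ commutes with the involution $x^{|x|/2}$. If $|x|$ is odd, then $|x|\in\{3,5\}$ by the previous paragraph, and an inspection of the character tables of $K$ and $\mathrm{Aut}(K)$ (noting that all odd-order elements of $G$ already lie in $K$) shows that $|C_G(x)|$ is even in every such case, so again $x$ commutes with some involution.

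Combining these ingredients, for any $x,y\in\Gamma_2$ we may choose involutions $i,j\in\mathcal{I}$ with $x\sim i$ and $y\sim j$, and then Lemma~\ref{involutionsgood} gives $\d(i,j)\le 3$, whence $\d(x,y)\le 1+3+1=5$. The only real work is a short, routine check of centraliser orders in the ATLAS, so I do not anticipate any genuine obstacle; the only point that deserves explicit care is confirming that no odd composite element orders (such as $9$ or $15$) arise in ${}^2\mathrm F_4(2)$, which again is immediate from the ATLAS.
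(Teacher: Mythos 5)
Your proposal is correct and follows essentially the same route as the paper: two classes of involutions plus Lemma~\ref{involutionsgood} give distance at most $3$ between involutions, the unique classes of elements of order $3$ and $5$ have centralisers of even order so every vertex of $\Gamma_2$ is adjacent to an involution, and the order-$13$ elements are excluded because they lie in isolated subgroups, yielding diameter at most $1+3+1=5$. The only difference is that you spell out the $G=K$ versus $G=\mathrm{Aut}(K)$ case distinction and the list of element orders explicitly, which the paper leaves implicit.
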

\begin{proof} We use the \cite[page 74]{ATLAS}. This shows that $G$ has two conjugacy classes of involutions and so by Lemma~\ref{involutionsgood} any two involutions have distance at most $3$ apart.
 As $G$ has exactly one conjugacy class of elements of order $3$ and of order $5$ and they both have centralizers of even order, such elements are distance one from an involution as are all the elements of even order. Hence, as the elements of order $13$ generate isolated subgroups, we have $\Gamma_2$ has diameter at most $5$.
\end{proof}

\begin{lemma}\label{2F42}
Suppose that $K\cong {}^2\mathrm F_4(q)$ for $q>2$ an odd power of $2$.Then $\Gamma_2$ has diameter at most $9$.
\end{lemma}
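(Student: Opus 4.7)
The plan is to show that every vertex of $\Gamma_2$ is at distance at most $3$ from the set $\mathcal I$ of involutions of $G$, and then combine this with the observation that any two involutions of $G$ lie at distance at most $3$ in $\Gamma$, to conclude that $\Gamma_2$ has diameter at most $3+3+3=9$. The second ingredient is immediate from Lemma~\ref{involutionsgood}, since $K\cong{}^2\mathrm F_4(q)$ has (at least) two $K$-conjugacy classes of involutions.

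For the main claim $\d(x,\mathcal I)\le 3$, I would first treat elements $x\in K$. If $x$ has even order then $\d(x,\mathcal I)\le 1$, so suppose $x$ has odd order and let $y\in\langle x\rangle$ have prime order. Since $y$ is inner of odd prime order $p\ne 2$, Lemma~\ref{lem:lietypecents}(c) places $C_K(y)$, up to a bounded quotient, inside a maximal torus $T$ of $K$. If $T$ has even order then $y$ commutes with an involution and $\d(x,\mathcal I)\le 2$. Otherwise $T$ is one of the exotic Ree tori of orders $q^2\pm q+1$ or $q^2\pm(q+1)\sqrt{2q}+q+1$; in that case the prime divisors of $|T|$ form a connected component of the prime graph $\pi(K)$ disjoint from $\{2\}$, so by Theorem~\ref{Williams} the element $y$ lies in an isolated Hall subgroup of $K$. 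That would place $x$ outside $\Gamma_2$, contrary to hypothesis.

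For $x\in G\setminus K$, I would use that $\mathrm{Out}(K)$ is cyclic of order $a=\log_2 q$, so every prime-order element of $G\setminus K$ is a field automorphism $f$ of odd prime order $p$. By Lemma~\ref{lem:lietypecents}(a), $C_K(f)\cong{}^2\mathrm F_4(q^{1/p})$, and this subgroup contains involutions, giving $\d(f,\mathcal I)\le 1$. Now passing from $x$ to a prime-order power $y\in\langle x\rangle$: if $y\notin K$ then $y$ is such an $f$ and $\d(x,\mathcal I)\le\d(x,y)+1\le 2$; while if $y\in K$ the bound $\d(x,\mathcal I)\le 3$ follows from the preceding paragraph applied to $y$, with one additional edge.

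The main obstacle is the verification that the exotic Ree tori of ${}^2\mathrm F_4(q)$ are (up to the small quotient permitted by Lemma~\ref{lem:lietypecents}(c)) self-centralizing, and that the set of primes dividing their orders is disjoint from the remainder of $\pi(K)$, so that Theorem~\ref{Williams} genuinely eliminates them from $\Gamma_2$. This is a bookkeeping exercise with the maximal torus classification of ${}^2\mathrm F_4(q)$ together with Williams' list of prime-graph components, but it is the step where the hypothesis $q>2$ is essential: for $q=2$ the cyclotomic factors $q^2\pm q\sqrt{2q}+q\pm\sqrt{2q}+1$ degenerate, and that case is subsumed by the direct computation of Lemma~\ref{Tits}.
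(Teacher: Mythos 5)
Your overall frame (two classes of involutions, so involutions are pairwise within distance $3$; show $\d(x,\mathcal I)\le 3$ for all $x\in\Gamma_2$; conclude $3+3+3=9$) and your treatment of outer elements via field automorphisms agree with the paper. The gap is in the inner case. Your dichotomy ``either the torus $T$ has even order, or $T$ is an exotic Ree torus whose primes form a component of $\pi(K)$ disjoint from $\{2\}$'' is false. Since $r=2$, \emph{every} maximal torus of $K$ has odd order, so the first branch is vacuous; by Lemma~\ref{lem:lietypecents}(c) (with $Z=1$) the real dichotomy is between $O^{2'}(C_K(y))\neq 1$ (even centralizer) and $C_K(y)$ an odd-order abelian group, and in the latter case $C_K(y)$ need not be one of the tori of order $q^2\pm\sqrt{2q^3}+q\pm\sqrt{2q}+1$. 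Concretely, $K$ has a cyclic maximal torus of order $q^2-q+1$ (note $q^2+q+1$ does not even divide $|K|$), and a regular element $y$ of prime order $s>3$ dividing $q^2-q+1$ has $C_K(y)$ equal to this torus, of odd order; yet $3\mid q^2-q+1$ and $3\sim 2$ in $\pi(K)$ (there are elements of order $6$, e.g.\ in $\mathrm{Sp}_4(q)\le K$), so these primes lie in the component of $2$ --- indeed by Williams/Kondrat\'ev the only components of $\pi(K)$ other than $\pi_1$ consist of the primes dividing $q^2\pm\sqrt{2q^3}+q\pm\sqrt{2q}+1$. Hence Theorem~\ref{Williams} does not expel such $y$ from $\Gamma_2$, and your argument gives no bound on $\d(y,\mathcal I)$ for them; the same problem arises for regular elements of the odd-order tori of orders $(q+1)^2$, $q^2+1$, $(q\pm\sqrt{2q}+1)^2$, $(q-1)(q+1)$, whose prime divisors also lie in $\pi_1$.

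The bound is still true for such elements, but establishing it is exactly where the work lies, and it is what the paper does differently: taking a shortest path $x\sim x_1\sim x_2\sim\cdots\sim t$ to an involution with inner vertices of prime order, it uses $Z=1$ to show $C_G(x_1)\cap K$ and $C_G(x_2)\cap K$ are abelian, hence equal, hence an isolated subgroup of $K$, which by Lemma~\ref{strongly2} is cyclic; this yields $x\sim x_2$ and a contradiction when $x\in K$, and for $x\in G\setminus K$ one invokes Lemma~\ref{outside2} (with the cyclic centralizer just produced) to get distance $3$ rather than $4$ from $\mathcal I$. Your proposal replaces this step by an incorrect classification of the odd-order centralizers, so as written it does not prove the lemma; note also that your final case $x\in G\setminus K$ with prime-order power $y\in K$ silently relies on that same false classification.
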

\begin{proof}
By \cite[(18.2)]{aschseitz} $K$ has two conjugacy classes of involutions and by \cite[(18.6)]{aschseitz} the centralisers in $K$ of representatives from different classes are non-isomorphic. Hence $G$ has exactly   two conjugacy classes of involutions (since $|G:K|$ is odd here). Let $\mathcal I$ be the set of involutions in $G$.  Then Lemma \ref{involutionsgood} implies that any two members of $\mathcal I$ are at most distance  3 apart, all 2-elements are contained in $\Gamma_2$ and the distance between any pair of elements of even order in  $G^\#$ is at most 5. Thus it remains to show that $\d(x,\mathcal{I})\leq 3$ for any $x\in \Gamma_2$ of odd order.

Let $x\in G$ have odd order and assume $\d(x,\mathcal I)>3$.  Choose $t \in \mathcal I$ so that $\d(x,t)$ is minimal   and  let $$x \sim x_1 \sim x_2 \sim \dots \sim t$$ be a shortest path between $x$ and $t$  with the inner elements of prime order.
Since the only automorphisms of $K$ of prime order are inner and field automorphisms (see \cite[Theorem 2.5.12 and Definition 3.5.13]{GLS3}), if either $x_1$ or $x_2$ are in $G \setminus K$,   Lemma \ref{lem:lietypecents}(a) implies $C_K(x_1)$ or $C_K(x_2)$ has even order which contradicts the choice of $x$. Hence $x_1, x_2 \in K$. For $i=1,2$, set  $C_i= C_G(x_i)$.
By our choice of $t$,  $C_i$ has odd order. Thus Lemma \ref{lem:lietypecents} implies $C_i\cap K$ has a normal abelian subgroup $T_i$ of odd order and $|(C_i \cap K)/T_i|$ divides $|Z|$ where $Z$ is as in Lemma \ref{lem:lietypecents}.   By \cite[Theorem 2.2.9, Table 2.2]{GLS3},  $|Z|=1$. Therefore $C_i\cap K$ is abelian. Now, as $x_2 \in C_1 \cap K$ and $C_1 \cap K$ is abelian, we have $C_1 \cap K \le C_2 \cap K$. Similarly, $C_2 \cap K \le C_1 \cap K$. Hence $C_1\cap K = C_2 \cap K$ and,  furthermore, if $x \sim x_1^* \sim x_2$ with $x_1^*$ of prime order,  then   $C_K(x_1^*)= C_K(x_2)$. Finally notice that $C_1 \cap K$ is an isolated subgroup of $K$ and therefore Lemma~\ref{strongly2} implies  $C_1 \cap K$ is a cyclic group.

If $x \in K$, then $x,x_2 \in C_1 \cap K$ and $x \sim x_2$, a contradiction.   Therefore $x \in G \setminus K$  has the property that no element of prime order in $\langle x \rangle$ is outside of $K$ for otherwise $x$ would centralize a field automorphism. On the other hand $K\langle x\rangle$ contains a $p$-element $f$ which centralizes an element of $\mathcal I$. Thus Lemma~\ref{outside2} implies $\d(x,\mathcal I) \le 3$. This contradiction concludes our proof.
\end{proof}

\begin{lemma} \label{psl34} Suppose  $K=K^* \cong \PSL_3(4)$. Then $\Gamma_2$ has diameter at most $5$.
\end{lemma}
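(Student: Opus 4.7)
The plan is to bound the diameter of $\Gamma_2$ by $5$ by establishing separately that (a)~every vertex of $\Gamma_2$ is adjacent to some involution, and (b)~any two involutions of $G$ lie at distance at most $3$ in $\Gamma(G)$; combining these gives $\d(x,y) \le 1+3+1 = 5$.

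For (a), since $K^* = K$ and $\mathrm{Out}(K) \cong D_{12}$ has a normal cyclic subgroup of order $3$ consisting of diagonal automorphisms, the quotient $G/K$ must avoid that subgroup and is therefore an elementary abelian $2$-group of order $1$, $2$, or $4$, whose nontrivial elements are field, graph, or graph-field automorphisms. Consequently every element of $G$ of odd order lies in $K$, and every element of $\Gamma_2$ of even order is adjacent to its $2$-part (and hence to an involution via a further step of length at most zero, since $4$-elements commute with their involution squares). For an odd-order element $x \in \Gamma_2 \cap K$, the fact that $x \in \Gamma_2$ forces the prime $p$ dividing $|x|$ to be joined to $2$ in $\pi(G)$, so some $G$-element of order $2p$ exists and produces an involution commuting with a $G$-conjugate of $x$; by conjugacy $x$ itself commutes with some involution. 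The requisite even-order centralisers of the outer involutions are provided by Lemma~\ref{lem:lietypecents}: namely $\PSL_3(2)$, $\A_5$, and $\PSU_3(2)$ for field, graph, and graph-field type respectively.

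For (b), if $G > K$ then $G$ has at least two classes of involutions (the transvections of $K$, with centraliser of order $64$, and the outer involutions, whose centralisers have different orders), so Lemma~\ref{involutionsgood} yields the bound directly. If $G = K$, then all involutions are transvections and each corresponds to an incident flag $(P,\ell)$ in $\mathrm{PG}(2,4)$; a direct check in coordinates shows that two transvections $\tau_1, \tau_2$ commute precisely when their flags share a point, share a line, or are mutually incident (i.e.\ $P_{\tau_1} \in \ell_{\tau_2}$ and $P_{\tau_2} \in \ell_{\tau_1}$). For any two flags $(P_1,\ell_1),(P_2,\ell_2)$, letting $m = \overline{P_1 P_2}$ (or any line through $P_1$ if $P_1=P_2$), the path
$$\tau_1 \;\sim\; \sigma_1 \;\sim\; \sigma_2 \;\sim\; \tau_2,$$
through transvections $\sigma_1,\sigma_2$ attached to the flags $(P_1,m)$ and $(P_2,m)$, realises the bound of $3$.

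The main obstacle is item (b) in the case $G = K$: the projective-geometry characterisation of commuting transvections requires a brief calculation, and then the three-step path must be produced. Since $|K| = 20160$, an alternative route is a direct {\sc Magma} verification in the style of Lemma~\ref{smallcasesAn}, which would dispatch both (a) and (b) in the case $G = K$ at once.
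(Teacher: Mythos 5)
Your overall strategy coincides with the paper's: show that every vertex of $\Gamma_2$ lies at distance at most $1$ from an involution and that any two involutions are at distance at most $3$, giving $1+3+1=5$. For $G>K$ both you and the paper get the second ingredient from Lemma~\ref{involutionsgood} after noting there are at least two classes of involutions (inner versus outer); here you should record, as the paper does, that the relevant extensions split, so involutions in $G\setminus K$ actually exist --- your argument uses this tacitly both in (a) and in (b). The one genuinely different ingredient is the case $G=K$: the paper simply quotes \cite[Theorem 1.2]{BBPR} for the fact that the commuting involution graph of $\PSL_3(4)$ has diameter $3$, whereas you prove it directly from the flag description of transvections. Your commuting criterion (each centre on the other's axis, which subsumes the equal-centre and equal-axis cases) and the length-$3$ path through transvections with axis $\overline{P_1P_2}$ are correct, so this is a legitimate self-contained replacement for the citation; the {\sc Magma} fallback would also be acceptable.

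The weak point is step (a) for an odd-order element $x\in\Gamma_2\cap K$. First, $x\in\Gamma_2$ only places the prime $p$ in the same connected component of $\pi(G)$ as $2$ (Theorem~\ref{primegraph}); it does not make $p$ adjacent to $2$. The implication does hold here, but only because $G/K$ is a $2$-group and $K$ has no elements of order $15$, $21$ or $35$, so the only possible neighbour of an odd prime in $\pi(G)$ is $2$ --- this needs to be said. Second, an element of order $2p$ exhibits an involution commuting with \emph{some} element of order $p$, not visibly with a $G$-conjugate of $x$: for $p=5$ and $p=7$ the group $K$ has two classes of such elements, so a fusion remark is required (it does work, since the two classes consist of powers of one another, and an involution centralising an order-$p$ element centralises all of its powers). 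A cleaner repair, closer to the way the paper's first paragraph is used: if no involution centralised $x$, then, $G/K$ being a $2$-group and inner involutions having $2$-group centralisers, $C_G(x)=C_K(x)$ would be a Sylow $3$-subgroup of order $9$ or a cyclic group of order $5$ or $7$; since $K$ has a single class of elements of order $3$ and its elements of order $5$ and $7$ are self-centralising, this subgroup is isolated in $G$, so the component of $x$ would contain no involutions, contradicting $x\in\Gamma_2$. With either repair your argument gives the stated bound of $5$.
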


\begin{proof}

 Let $\mathcal I$ be the set of involutions in $K$.  Suppose first that $G=K$. Since $C_K(i)$ is a $2$-group for $i \in \mathcal I$,    $\Gamma_2$ consists of elements of even order. By \cite[Theorem 1.2]{BBPR}, the commuting involution graph in $K$ is connected of diameter $3$. Hence $\Gamma_2$ has diameter $5$ in this case.

There are three extensions of $K$ by a subgroup of order $2$ and a unique extension by a group of order $4$. All these extensions split. It follows that if $G$ is one of these extensions, then  $G$ has at least two conjugacy classes of involutions and so by Lemma~\ref{involutionsgood} any two involutions are at distance at most $3$ apart in $\Gamma$.  By the first paragraph any element of odd order in $\Gamma_2$  must be connected to an involution in $G \setminus K$ and so $\Gamma_2$ has diameter at most $5$ in these cases.  Since $\mathrm{Out}(K) \cong \mathrm{Sym}(3) \times 2 $, this completes the proof.
\end{proof}

From here on we assume that  $K$ is not a Suzuki-Ree group and is not isomorphic to $\PSL_2(r^a)$. We define  $\mathcal{R}$ to be the set of long root elements of $K$. Notice that exclusion of the  Suzuki-Ree groups means that $\mathcal R$ is non-empty.

The need to remove $\PSL_2(r^a)$ and the necessity of Lemma~\ref{psl34} is highlighted by the following lemma which shows that  the distance between two members of $\mathcal{R}$ is  at most $2$ and allows us to direct our attention towards bounding $\d(x,\mathcal{R})$ for each $x\in \Gamma_r$.

\begin{lemma}\label{rootelements} Assume $K$ is not $\PSL_3(2)\cong \PSL_2(7)$ and if $K \cong \PSL_3(4)$ that $K^* \cong \PGL_3(4)$. Then the distance between any two elements of $\mathcal R$ in $\Gamma$ is at most $2$.
\end{lemma}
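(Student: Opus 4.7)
The plan is to apply Lemma~\ref{rootgroups}(iv) directly: for any two long root elements $x, y \in \mathcal R$, either $\langle x, y \rangle$ is an $r$-group, or $\langle x, y \rangle$ is contained in a fundamental $\SL_2(r^a)$-subgroup $S$ of $K$. I will handle these two alternatives separately and show that in each case we can exhibit a common non-trivial neighbour of $x$ and $y$ in $\Gamma$.

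In the first case, $\langle x, y\rangle$ is an $r$-group, so $x, y$ are contained in a common Sylow $r$-subgroup $R$ of $K$. Since $R$ is a non-trivial $r$-group we have $Z(R) \ne 1$, and any $z \in Z(R)^\#$ commutes with both $x$ and $y$ and is a non-identity element of $K \le G$, hence $z \in \Gamma$ and $x \sim z \sim y$.

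In the second case, $x, y \in S$ with $S$ a fundamental $\SL_2(r^a)$-subgroup of $K$, and it suffices to produce a non-identity element of $G$ centralising $S$. By Lemma~\ref{rootgroups}(v), $C_K(S) \ne 1$ unless $K \cong \PSL_3(2)$ or $K \cong \PSL_3(4)$. The first possibility is excluded by hypothesis. For the second, the hypothesis $K^* \cong \PGL_3(4)$ means that the diagonal automorphisms of $K$ are realised inside $G$; the preimage in $\PGL_3(4)$ of a fundamental $\SL_2(4)$ is (a conjugate of) the image of $\GL_2(4) \le \mathrm{GL}_3(4)$, whose centre has order $4-1 = 3$ and survives in $K^*$, giving a non-trivial element $z \in C_{K^*}(S) \le C_G(S)$. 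Thus in all cases $C_G(S)^\# \ne \emptyset$, and we obtain $x \sim z \sim y$ for any such $z$.

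The main subtlety is the $\PSL_3(4)$ case: here $C_K(S)$ really is trivial, so the argument genuinely requires going out of $K$ into $K^*$, which is precisely why the excluded configuration $K \cong \PSL_3(4)$ with $K^* = K$ was separated off in Lemma~\ref{psl34}. Everywhere else the argument is entirely formal once Lemma~\ref{rootgroups} is in hand, and no distinction between the various classes of long root elements (when there are two of them, as in the symplectic case) is needed because the common neighbour $z$ is produced purely from the structure of $\langle x, y\rangle$.
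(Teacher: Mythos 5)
Your proposal is correct and follows essentially the same route as the paper: invoke Lemma~\ref{rootgroups}(iv) to split into the $r$-group case and the fundamental $\SL_2(r^a)$ case, then produce a common neighbour from $Z$ of an $r$-group or from the centraliser of $S$ via Lemma~\ref{rootgroups}(v). Your explicit treatment of the $K\cong\PSL_3(4)$, $K^*\cong\PGL_3(4)$ case, where the centralising element of order $3$ must be taken in $K^*\setminus K$, spells out a detail the paper's proof leaves implicit, but the argument is the same.
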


\begin{proof}
Let $u_1$ and $ u_2 $ be elements of $ \mathcal{R}$. By Lemma~\ref{rootgroups} (iv) either $\langle u_1, u_2\rangle$ is an $r$-group or $\langle u_1, u_2\rangle$ is conjugate to a subgroup of  a fundamental $\SL_2(r^a)$  which we call $S$.  Obviously if $\langle u_1,u_2\rangle$ is an $r$-group, then $\d(u_1,u_2) \le 2$ while, if $\langle u_1, u_2\rangle$ is conjugate to a subgroup of  $S$, then $\d(u_1,u_2)\le 2$ as, by Lemma~\ref{rootgroups} (v), $C_K(S) \ne 1$.
\end{proof}

The following fact implies $\d(x,\mathcal{R})\leq 2$ whenever $O^{r'}(C_K(x))\neq 1$ for $x\in K$.

\begin{lemma}
\label{rootsincentre}
Suppose that  $h$ is  an $r$-element of $K$. Then $h$ centralises a root element.
\end{lemma}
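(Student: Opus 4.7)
The plan is to use the standard fact that the centre of a Sylow $r$-subgroup of $K$ contains a long root subgroup, and then observe that any element of the Sylow subgroup is centralised by its centre.

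First, I embed $h$ in some Sylow $r$-subgroup $U$ of $K$, which is possible by Sylow's theorem. By Sylow's theorem again I may conjugate so that $U = C_{\ov U}(\sigma)$, where $\ov U$ is the unipotent radical of the $\sigma$-invariant Borel subgroup $\ov B$ of $\ov K$ fixed in the $\sigma$-setup $(\ov K,\sigma)$ for $K$ described before Lemma~\ref{rootgroups}; note that conjugating $U$ moves the long root subgroup it contains to a $K$-conjugate, which is still a long root subgroup by Lemma~\ref{rootgroups}(i), so there is no loss in doing this.

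Next I claim that $Z(U)$ contains the long root subgroup $X_\rho$ corresponding to the highest root $\rho$ of $\Sigma$. Indeed, $\ov U = \prod_{\alpha \in \Sigma^+}\ov X_\alpha$ and, because $\rho$ is the highest root, $[\ov X_\alpha,\ov X_\rho]=1$ for every $\alpha\in\Sigma^+$; thus $\ov X_\rho\le Z(\ov U)$. Since $K$ is not a Suzuki or Ree group, $\sigma$ preserves root lengths, so $\ov X_\rho$ is $\sigma$-invariant and taking fixed points yields $X_\rho = C_{\ov X_\rho}(\sigma) \le C_{Z(\ov U)}(\sigma) = Z(U)$. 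By definition every non-trivial element of $X_\rho$ is a long root element of $K$.

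Finally, since $h\in U$, every element of $Z(U)$ commutes with $h$. Picking any non-trivial $u\in X_\rho\le Z(U)$ produces a long root element centralising $h$, as required. The only step requiring any care is the passage from $\ov X_\rho\le Z(\ov U)$ to $X_\rho\le Z(U)$ in the twisted case, but this is immediate from the fact that $\sigma$ does not permute root lengths outside the Suzuki--Ree situation, exactly as exploited in the proof of Lemma~\ref{rootgroups}.
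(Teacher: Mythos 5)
Your proof is correct and takes essentially the same approach as the paper: embed $h$ in a Sylow $r$-subgroup $U$ and observe that $Z(U)$ contains long root elements, the only difference being that the paper simply cites \cite[Theorem 3.3.1]{GLS3} for this fact while you rederive it via the highest-root commutator argument. (A minor remark: only the trivial containment $C_{Z(\ov U)}(\sigma)\le Z(U)$ is needed, not the equality you assert, so nothing is lost.)
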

\begin{proof}
Let $U$ be a Sylow $r$-subgroup of $K$ containing $h$.  Then, by  \cite[Theorem 3.3.1]{GLS3} $Z(U)$ contains long root elements, which proves the lemma.
\end{proof}

\begin{prop}\label{concomp10} Suppose  $K$ is a finite simple group of Lie type and assume that  $K$ is not $\PSL_2(r^a)$, a Suzuki -Ree group, $\PSL_3(2)$ and, if  $K \cong \PSL_3(4)$ assume that $K^* \cong \PGL_3(4)$.  Let $u \in \mathcal R$, set $G^*= C_G(u)K$ and let $\Gamma_r$ be a connected component of $\Gamma$ which contains $\mathcal R$.  If $x \in \Gamma_r$, then $\d(x,\mathcal{R}) \leq 4$. In particular,    $\Gamma_r=\Gamma_2$ has diameter at most $10$.
\end{prop}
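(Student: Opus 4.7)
The strategy is to combine Lemma~\ref{rootelements}, which gives distance at most $2$ between any two elements of $\mathcal R$, with the claimed bound $\d(x, \mathcal{R}) \le 4$ for every $x \in \Gamma_r$, yielding the diameter bound $4+2+4 = 10$ by the triangle inequality. First I would verify that $\Gamma_r = \Gamma_2$: by Lemma~\ref{rootgroups}(iv), a long root element $u$ lies in a fundamental $\SL_2(r^a)$-subgroup $S$; if $r$ is odd, the central involution of $S$ commutes with $u$, while if $r = 2$ then $u$ is itself an involution. Either way $\mathcal R$ lies in a connected component containing an involution.

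For $x \in G \setminus K$, I would apply Lemma~\ref{outside} with $a = u \in \mathcal R$. By Lemma~\ref{rootgroups}(ii) and (iii), $\mathcal R$ is $G$-invariant and, outside the exceptional cases $\mathrm B_2(2^a)$, $\mathrm F_4(2^a)$, $\mathrm G_2(3^a)$ and $\PSp_{2n}(r^a)$ with $r$ odd, it is a single $K$-class, so $u^G = u^K$ and Lemma~\ref{outside} delivers $\d(x, \mathcal R) \le 4$ immediately. In the exceptional cases a small modification of the argument of Lemma~\ref{outside}, replacing $K$ by $KC_G(u)$ and exploiting that $|G : KC_G(u)|$ is bounded by a small constant, handles the two-class situation.

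For $x \in K$ one may take $x$ of prime order. If $x$ has order $r$, then by Lemma~\ref{rootsincentre} $x$ already centralises a root element and $\d(x, \mathcal R) \le 1$. Assume therefore that $x$ has prime order $p \ne r$. If $C_K(x)$ contains a non-trivial $r$-element $y$, then applying Lemma~\ref{rootsincentre} to $y$ produces a root element $u'$ with $y \sim u'$, and $x \sim y \sim u'$ yields $\d(x, \mathcal R) \le 2$.

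The main obstacle is the remaining case, where $x \in K$ is a semisimple element of prime order whose centraliser in $K$ is an $r'$-group. By Lemma~\ref{lem:lietypecents}(c), $O^{r'}(C_K(x))$ is a central product of groups of Lie type in characteristic $r$, so the hypothesis forces $O^{r'}(C_K(x)) = 1$; hence $C_K(x)$ is essentially an abelian torus $T$ containing $x$. When $|T|$ is even, an involution $t \in T$ satisfies $x \sim t$, and $t$ is not regular semisimple, so Lemma~\ref{lem:lietypecents}(c) applied to $t$ produces an $r$-element of $C_K(t)$, placing a root element two further steps away and giving $\d(x, \mathcal R) \le 3$. When $|T|$ is odd, a more delicate argument in the style of Segev--Seitz \cite{SS}, using the structural information of Section~\ref{Lie} to route through either an involution in $C_G(x)$ outside of $T$ or a field automorphism centralising $x$, is needed to secure $\d(x, \mathcal R) \le 4$; this final sub-case is the technical crux and I expect it to consume the bulk of the work.
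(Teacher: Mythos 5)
Your overall skeleton matches the paper's (pairwise distance $\le 2$ between long root elements via Lemma~\ref{rootelements}, plus $\d(x,\mathcal R)\le 4$, plus the treatment of $x\notin K$ via Lemma~\ref{outside}), but the heart of the proposition is exactly the case you defer, and the plan you sketch for it would not work as stated. First, your structural claim in the crux case is wrong: if $x\in K$ is semisimple with $C_K(x)$ an $r'$-group, Lemma~\ref{lem:lietypecents}(c) gives $O^{r'}(C_K(x))=1$ but does \emph{not} make $C_K(x)$ ``essentially an abelian torus''; it gives a normal abelian $r'$-group $T$ with $C_K(x)/T$ isomorphic to a subgroup of $Z(K_u)$, and the non-abelian possibility $C/T\ne 1$ is precisely the hard case. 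Indeed the paper's proof hinges on it: taking a shortest path $x\sim x_1\sim x_2\sim\dots$ to an $r$-element, minimality forces $C_{K^*}(x_1)$ to be a \emph{non-abelian} $r'$-group (abelian would let one shortcut $x\sim x_2$), and the classification of $Z(K_u)$ then pins $K$ down to $\mathrm E_6(r^a)$, ${}^2\mathrm E_6(r^a)$, $\PSL_n(r^a)$ or $\PSU_n(r^a)$ with $p_1$ dividing the relevant $(r^a\mp 1,n)$; the route to $\mathcal R$ then goes through fundamental $\SL_2$-subgroups, and for $n=3$ through explicit $\mathrm{GL}_1\times\mathrm{GL}_2$ (resp.\ $\mathrm{GU}_1\times\mathrm{GU}_2$) decompositions obtained by a Schur's-lemma/non-cyclic Sylow $3$-argument in $\mathrm{GL}_3(r^a)$ and $\mathrm{GU}_3(r^a)$ --- this is also where the hypothesis $K^*\cong\PGL_3(4)$ is actually used, which your outline never touches. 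Second, your proposed routings for the odd-$|T|$ case fail in general: when $G=K$ (or more generally when $C_G(x)\cap K$ has odd order and no field automorphism of $G$ centralises $x$) there is neither ``an involution in $C_G(x)$ outside of $T$'' nor ``a field automorphism centralising $x$'', yet such $x$ can still lie in $\Gamma_r$; in the paper field automorphisms enter only for elements outside the component $\Gamma_r^o$ of $\Gamma(K^*)$, a subtlety (membership of $\Gamma_r$ versus $\Gamma_r^o$) your proposal does not address at all.

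Two smaller points. For $x\in G\setminus K$ in the exceptional cases of Lemma~\ref{rootgroups}(ii)--(iii) ($\mathrm{Sp}_4(2^a)$, $\mathrm F_4(2^a)$, $\mathrm G_2(3^a)$, and $\PSp_{2n}(r^a)$ with $r$ odd), ``replace $K$ by $KC_G(u)$'' is not a modification of Lemma~\ref{outside}, since $G^*=C_G(u)K$ need not be normal and $u^G=u^{G^*}$ would be needed; the paper instead applies Lemma~\ref{outside} \emph{inside} $G^*$ (where $u^{G^*}=u^K$ holds by construction) and handles $x\in G\setminus G^*$ separately, using that $|G:G^*|$ is a power of $2$ so such $x$ has even order and is close to an involution, hence to $\mathcal R$. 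Also, ``one may take $x$ of prime order'' is not a free reduction here: a bound $\d(x',\mathcal R)\le 4$ for a prime-order power $x'$ of $x$ only yields $\d(x,\mathcal R)\le 5$, which is too weak for the diameter-$10$ conclusion; the paper avoids this by bounding $\d(x,\mathcal R)$ for arbitrary $x$, putting the prime-order hypothesis on the interior vertices $x_1,x_2$ of the path instead.
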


\begin{proof} Let $J$ be a fundamental $\SL_2$-subgroup of $G$. By Lemma~\ref{rootgroups},  $|G:N_G(\mathcal R)|= 2$ and $|N_G(\mathcal R):G^*|=2$. Hence, if $x \in G \setminus G^*$, then $x$ has even order. In particular, $x$ commutes with an involution $y$. If $r=2$, then $y$ has distance at most $2$ from an element of $\mathcal R$  and so $\d(x,\mathcal R)= 3$ in this case.  If $r$ is odd, then $Z(J)$ has an involution $z$. If $y$ and $z$ are conjugate then $\d(x,\mathcal R)= 2$ whereas, if $y$ and $z$ are not conjugate, then $\d(y,z)\le  2$ and so $\d(x,\mathcal R)\le 4$. Hence we may suppose that $x \in G^*$. If $x \not \in K$, then $u^{G^*}= u^K$ and so $\d(x,\mathcal R) \le 4$ by Lemma~\ref{outside}. Hence from now on we may assume that $x \in K$.

 Let $\Gamma_r^o$ be the connected component of $\Gamma(K^*)$ which contains $\mathcal R$. Then obviously we have $\Gamma_r \cap K^* \supseteq \Gamma_r^o$.  Just for precision we mention here that it is possible that $K^* \not \le G^*$.
Assume that $x \in (G^* \cap \Gamma_r)\setminus \Gamma_r^o$. Then   $x$ is contained in an abelian isolated subgroup $I$ of $K ^*$ by Theorem~\ref{Williams}. Since $x \in \Gamma_r$, there is an element $f$ of prime order in $G \setminus K^*$ such that $C_I(f) \ne 1$.  Since every element of prime order in $G\setminus K^*$ is a graph or a graph-field automorphism and  $I$ is abelian, $\d(x,f)= 2$ by Lemma~\ref{lem:lietypecents}.
Since $f$ centralizes an $r$-element, we have $\d(x,\mathcal R)\le 4$ by Lemma~\ref{rootsincentre}.    Finally we have to consider the case when $x \in \Gamma_r^o \cap K$.

%
% Our basic plan is to show that every element of $\Gamma_r^o\cap K$ is at distance at most  $3$ from an element of order $r$. Then every element is at distance at most $4$ from a long root element by Lemma~\ref{rootsincentre}. This plan will fail in one clear case and in this case we establish that $x$ has distance $4$ from a member of $\mathcal R$.

Let $x \in \Gamma_r^o$ be an arbitrary element. If $x$ is not an $r'$-element, then plainly $x$ has distance at most 1 from an $r$-element. Hence we may suppose that $x$ is a semisimple element.  Let $u$ be an $r$ element of $\Gamma_r$ and $$\pi =(x \sim x_1 \sim x_2\sim \dots \sim u)$$ be a shortest path to $u$. Assume that the length of $\pi$ is at least $3$.   Let $x_1$ have order $p_1$ and $x_2$ have  order $p_2$ where $p_1$ and $p_2$ are  primes. Then $C=C_{K^*}(x_1)$ is  an $r'$-group and $C$ is certainly non-abelian for otherwise $x \sim x_2$, which contradicts the minimality of $\pi$.
 By \cite[Theorem 4.9.1]{GLS3}(g) and (h) we see that $x_1$ is an inner-diagonal automorphism of parabolic type. Furthermore Lemma~\ref{lem:lietypecents} shows that $C$ contains a normal, abelian  $r'$-subgroup $T$ such that $C/T$ is an elementary abelian $p_1$-subgroup which is isomorphic to a subgroup of $Z(K_u)$ where $K_u$ is the universal covering group of $K$.  Since $C$ is not abelian, we have $C/T\ne 1$.   Now, if $p_1=2$, then $r$ is odd. In this case, $Z(J)$ has order $2$  and is not conjugate to $\langle x_1\rangle$ and so $x_1$ is distance at most $2$ from $Z(J)$ and    distance at most $3$ from $\mathcal R$. Thus we may suppose that $x_1$ does not have order $2$.
In particular,  $|C/T|$ is  odd.  Since $C/T$ is isomorphic to a subgroup of $Z(K_u)$, \cite[Theorem 2.2.9, Table 2.2]{GLS3} implies $C/T$ is a cyclic group and one of the following holds.
 \begin{enumerate}
 \item $p_1=3$ divides $r^a-1$ and $K \cong\mathrm E_6(r^a)$;
 \item $p_1=3$ divides $r^a+1$ and $K \cong{}^2\mathrm E_6(r^a)$;
 \item $p_1 $ divides $(r^a-1,n)$ and $K \cong \PSL_n(r^a)$; or \item $p_1$ divides $(r^a+1,n)$ and $K \cong  \PSU_{n}(r^a)$.
  \end{enumerate}
If possibility (i) or (ii) holds, then $x_1$ has order $3$  and,  as $J$ has order divisible by $3$, $x_1$ is distance at most $2$ from an  element of a conjugate of  $J$. Since $J$ commutes with a long root element (contained in a subgroup isomorphic to a group of type $\mathrm A_5(r^a)$ in case (i) and in a subgroup isomorphic to a group of type ${}^2\mathrm A_5(r^a)$ in case (ii)), we have that $x$ has distance at most $4$ from $\mathcal R$  in these first two  cases.

Thus we may suppose that cases (iii) and (iv) pertain. In these cases $p_1$ divides $r^a-1$ in case (iii) and $r^a+1$ in case (iv)  and we know that $p_1$ is not $2$. In  particular we have that $p$ divides the order of $J$. If $n \ge 4$, then $J$ is centralized by a long root element and so we have $x_1$ is distance at most three  from a long root element. Hence $\d(x,\mathcal R) \le 4$.

Suppose   that $n=3$. Then $p_1=3$ as well.

First assume that $K\cong \PSL_3(r^a)$ and   put  $\wh {K^*}= \mathrm{GL}_3(r^a)$. Let $V$ be the natural $\mathrm{GL}_3(r^a)$-module and set $Z= Z(\wh{K^*})$. So $|Z|=(r^a-1)_3$. For $w \in K^*$,   let $\wh w$ be an element of minimal order in $\wh {K^*}$ with $\wh w Z  = w$.  Let $D \in \syl_3(C_{K^*}(x_1))$ and let $\wh D  $  be a $3$-group such that $\wh  D  Z/Z \ge  D$.   As $|K^*|_3 >  3$, $|C|_3 \ge 3^2$ and so   $$|\wh D|\ge |C|_3  (r^a-1)_3\ge  3^2(r^a-1)_3.$$ If $\wh D$ is cyclic, then  Schur's Lemma implies that $|D| \le (r^{3a}-1)_3= 3(r^a-1)_3$ which is a contradiction.
Hence $\wh D$ is not cyclic.
Therefore,  there exists $\wh{x_2^*} \in \wh D \setminus Z$  of order $3$. Let  $z \in Z$ have  order $3$. Then $C_V(\wh{x_2^*}z^j)\ne 0$ for some $1 \le 3 \le j$. But then $\wh{x_2^*}$ preserves the decomposition of $V$ into a sum of a $1$-dimensional and a $2$-dimensional subspace. So $\wh{x_2^*} \in \wh X \cong \mathrm{GL}_1(r^a)\times \mathrm{GL}_{2}(r^a)$. So $\wh{x_2^*}$ commutes with an element $\wh y \in Z(\wh X)$ and this element commutes with an element of $  \wh {\mathcal R}$. Now taking  images in $K^*$ and using the fact that, if $r^a=4$, then $K^*= \PGL_3(4)$, we have $\d(x,\mathcal R) \le 4$.

We argue similarly for $K \cong \PSU_3(r^a)$. So let $\wh {K^*} = \mathrm{GU}_3(r^a) $  and $V$ be the natural module for $\wh K^*$. We use all the conventions of the previous paragraph.
Let $D \in \syl_3(C_{K^*}(x_1))$ and let $\wh D  $ be a $3$-group such that  $\wh  D  Z/Z= D$.   Obviously  $|\wh D| = |C|_3(r^a+1)_3 \ge  3^2(r^a+1)_3$. If $\wh D$ is cyclic, then  Schur's Lemma implies that $|D| \le (r^{6a}-1)_3= 3(r^{2a}-1)_3=3(r^{a}+1)_3$ which is a contradiction.
Hence $\wh D$ is not cyclic.

Therefore there exists $\wh{x_2^*} \in \wh D \setminus Z$  of order $3$. Let  $z \in Z$ have  order $3$. Then $C_V(\wh{x_2^*}z^j)\ne 0$ for some $1 \le 3 \le j$. But then $\wh{x_2^*}$ preserves the decomposition of $V$ into a direct sum of $3$ perpendicular non-degenerate  $1$-dimensional spaces. In particular,  $\wh{x_2^*} \le X \cong \mathrm{GU}_1(r^a)\times \mathrm{GU}_{2}(r^a)$. Thus  $\wh{x_2^*}$ commutes with an element $\wh y \in Z(X)$ and this element commutes with an element of $  \wh {\mathcal R}$. Hence $\d(x,\mathcal R) \le 4$. So we have shown $\d(x,\mathcal R) \le 4$ in all cases.

Finally combining this with Lemma~\ref{rootelements} implies that $\Gamma_r=\Gamma_2$ has diameter at most $10$.
\end{proof}

\begin{remark}  Let $G= \mathrm {PGL}_3(7)$. Then $7^2+7+1= 57$ and, if $x$ is an element of order  of order $57$, then $\d(x, \mathcal R)= 4$.
\end{remark}

 We combine the results of this section in a single statement.

 \begin{theorem}\label{Lies} Suppose that $F^*(G)$ is a simple group of Lie type. Then every connected component of   $\Gamma$ has diameter at most 10.
 \end{theorem}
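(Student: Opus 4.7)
The plan is to synthesize the case-by-case results of this section into one statement, branching on the isomorphism type of $K = F^*(G)$. First I would invoke Theorem~\ref{Williams}: any connected component $\Psi$ of $\Gamma$ with $2 \notin \pi(\Psi)$ has diameter exactly $1$. Combined with Theorem~\ref{primegraph}, which gives a bijection between $G$-orbits on connected components of $\Gamma$ and connected components of $\pi(G)$, and the fact that diameter is invariant under $G$-conjugation, this reduces the problem to bounding the diameter of $\Gamma_2$, the (essentially unique) connected component containing an involution, when one exists.

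For the case analysis I would first dispatch the rank-one and small-characteristic families. If $K \cong \PSL_2(r^a)$ with $r$ odd and $G$ contained in the projective linear overgroup $K^*$, Lemma~\ref{PSLPsi} gives a bound of $6$; for the remaining overgroups of $\PSL_2(r^a)$, and for the Suzuki-Ree families ${}^2\B_2(2^a)$ with $a$ odd and ${}^2\mathrm G_2(3^a)$ with $a \ge 3$ odd, Lemma~\ref{rankone} gives the bound $10$, with Lemma~\ref{smallree} absorbed into this statement. The small exceptional simple groups are handled separately: Lemma~\ref{Tits} gives bound $5$ for the Tits group $K \cong {}^2\mathrm F_4(2)'$, Lemma~\ref{2F42} gives bound $9$ for $K \cong {}^2\mathrm F_4(q)$ with $q > 2$ an odd power of $2$, and Lemma~\ref{psl34} gives bound $5$ in the remaining excluded case $K \cong \PSL_3(4)$ with $K^* \not\cong \PGL_3(4)$. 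Every remaining pair $(G,K)$ then satisfies the hypotheses of Proposition~\ref{concomp10}, which yields the bound $10$ on $\Gamma_r = \Gamma_2$. Here I should explicitly observe that $\PSL_3(2) \cong \PSL_2(7)$, so the $\PSL_3(2)$ exclusion in Proposition~\ref{concomp10} is already subsumed by the linear case treated via Lemma~\ref{PSLPsi} and creates no gap in the enumeration.

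Taking the maximum of all bounds above yields $10$, which is the stated conclusion. The only remaining concern is bookkeeping: one must verify that the list of exceptional isomorphism types treated before Proposition~\ref{concomp10} genuinely exhausts every simple group of Lie type excluded from the hypotheses of that proposition, and that in each application the overgroup $G \le \mathrm{Aut}(K)$ is covered. The substantive difficulty of the theorem is therefore not in the present assembly but has already been borne by Proposition~\ref{concomp10} and Lemma~\ref{rankone}; the present proof is essentially a checklist across Lie-type isomorphism classes.
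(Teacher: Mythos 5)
Your proposal is correct and follows essentially the same route as the paper: reduce to $\Gamma_2$ via Theorem~\ref{Williams}, then run the same checklist of Lemmas~\ref{PSLPsi}--\ref{psl34} (including the $\PSL_2(7)\cong\PSL_3(2)$ identification) before applying Proposition~\ref{concomp10} to all remaining cases.
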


\begin{proof} We know all the connected components other than $\Gamma_2$ are of diameter one by Theorem~\ref{Williams}. Thus we only need to consider $\Gamma_2$.
 If $K$ is a Suzuki-Ree group, then Lemmas~\ref{rankone}, \ref{Tits} and \ref{2F42} provide the result. When $K \cong \PSL_2(r^a)$, we apply Lemma~\ref{rankone} and note also that $\PSL_2(7) \cong \PSL_3(2)$. If $K \cong \PSL_3(4)$ and $K^* \ne \PGL_3(4)$, then Lemma~\ref{psl34}  yields $\Gamma_2$ has diameter at most $5$. All the remaining groups are considered in Proposition~\ref{concomp10}.

\end{proof}

\section{The proof of Theorem~\ref{MainTheorem}}

We now draw all our strings together and prove Theorem~\ref{MainTheorem}. So suppose that $G$ is a finite group with trivial centre and let $\Gamma=\Gamma(G)$. Assume that $\Gamma$ has diameter at least $11$. Then Theorem~\ref{almost}  states that $F^*(G)$ is a non-abelian simple group.  Using Theorems~\ref{Spor} and \ref{Alt}, Lemma~\ref{smallcasesAn}  and the classification of finite simple groups, we have that $F^*(G)$ is a simple group of Lie type. Finally Theorem~\ref{Lies} provides the contradiction.

\end{document}